\newtheorem{definition}{Definition}[section]
\newtheorem{theorem}[definition]{Theorem}
\newtheorem{lemma}[definition]{Lemma}
\newtheorem{corollary}[definition]{Corollary}
\newtheorem{proposition}[definition]{Proposition}
\theoremstyle{definition}
\newtheorem{example}[definition]{Example}
\newcommand\style{\mathcal }          %%% calligraphic
\newcommand{\B}{\style{B}}
\newcommand{\M}{\style{M}}
\newcommand\A{{\style A}}
\renewcommand{\H}{\style{H}}
\newcommand{\K}{\style K}
\newcommand\osr{{\style R}}
\newcommand\oss{{\style S}}
\newcommand\ost{{\style T}}
\newcommand\cstar{{\rm C}^*}                              %%% C$^*$-algebra generated by
\newcommand\cstare{{\rm C}_{\rm e}^*}              %%% C$^*$-envelope of
\newcommand\tr{ \operatorname{Tr} }
\def\mytimeA#1%format #1 (usually \the\time) using a.m. & p.m.
           \def\tapm{ a.m.}%
           \def\tapm{ p.m.}%
\newcolumntype{?}{!{\vrule width 1pt}}
\newcommand{\im}{\rm{i}}
\begin{document}
\title{Complete Order Equivalence of Spin Unitaries}
\author[D.~Farenick]{Douglas Farenick \textsuperscript{1}}
\author[F.~Huntinghawk]{Farrah Huntinghawk \textsuperscript{2}}
\author[A.~Masanika]{Adili Masanika\textsuperscript{1}}
\author[S.~Plosker]{Sarah Plosker \textsuperscript{2,1}}

\thanks{\textsuperscript{1}Department of Mathematics and Statistics, University of Regina, Regina SK S4S 0A2, Canada}
\thanks{\textsuperscript{2}Department of Mathematics and Computer Science, Brandon University,
Brandon, MB R7A 6A9, Canada}

\date{ \today}

\begin{abstract} This paper is a study of linear spaces of matrices and linear maps on matrix algebras that arise from 
\emph{spin systems}, or \emph{spin unitaries}, which are finite sets $\mathcal S$ of selfadjoint unitary matrices such that any two unitaries in $\mathcal S$
anticommute. We are especially interested in linear isomorphisms between these linear spaces of
matrices such that the matricial order within these spaces is preserved; such isomorphisms are called 
complete order isomorphisms, which might be viewed as weaker notion of unitary similarity. The main result of this paper shows that 
all $m$-tuples of anticommuting selfadjoint unitary matrices are equivalent in this sense, meaning that there exists a unital complete order
isomorphism between the unital linear subspaces that these tuples generate. We also show that the C$^*$-envelope of any operator system
generated by a spin system of cardinality $2k$ or $2k+1$ is the simple matrix algebra $\M_{2^k}(\mathbb C)$.
As an application of the main result, we show that the free spectrahedra determined by spin unitaries depend only upon the 
number of the unitaries, not upon the particular
choice of unitaries, and we give a new, direct proof of the fact \cite{helton--klep--mcculllough--schweighofer2019}
that the spin ball $\mathfrak B_m^{\rm spin}$ and max ball $\mathfrak B_m^{\rm max}$ coincide as matrix convex sets in the cases $m=1,2$.
We also derive analogous results for countable spin systems and their C$^*$-envelopes.
\end{abstract}

\keywords{spin system,   operator system, completely positive linear map, complete order isomorphism, 
C$^*$-envelope, matrix convex set}
\subjclass[2020]{15A30, 15B57, 46L07, 47A13, 47A20, 47L05} 

\maketitle
%%%%%%%%%%%%%%%%%%%
\section{Introduction}

This paper is a study of linear spaces of matrices and linear maps on matrix algebras that arise from 
\emph{spin systems}, or \emph{spin unitaries}, which are finite sets $\mathcal S$ of selfadjoint unitary matrices such that any two unitaries in $\mathcal S$
anticommute. In addition to their interest from the perspective of linear algebra,  
these linear spaces and linear maps are commonly studied in operator algebra theory
and in applications that include quantum information theory. We are especially interested in linear isomorphisms between these linear spaces of
matrices such that the matricial order within these spaces is preserved; such isomorphisms are called 
complete order isomorphisms.

We denote the algebra of $d\times d$ matrices over the field $\mathbb C$ of complex numbers by 
$\M_d(\mathbb C)$, 
the $\mathbb R$-vector space of selfadjoint complex $d\times d$ matrices by
$\M_d(\mathbb C)_{\rm sa}$, and the cone of positive semidefinite $d\times d$ matrices
by $\M_d(\mathbb C)_+$.
The unitary group in $\M_d(\mathbb C)$ is denoted by $\mathcal U_d$, and
$\tr$ denotes the canonical trace on $\M_d(\mathbb C)$.

\begin{definition}\label{ss defn} A subset $\mathcal S\subset\mathcal U_d$ of unitary matrices 
is a \emph{spin system of unitaries}, or simply a \emph{spin system}, if
\begin{enumerate}
\item the cardinality of the set $\mathcal S$ is at least two, 
\item $u^*=u$ for every $u\in\mathcal S$, and 
\item $uv=-vu$ for every pair of distinct elements $u,v\in\mathcal S$.
\end{enumerate}
If a spin system $\mathcal S$ consists of just two elements, $u$ and $v$, then the matrices $u$ and $v$
are called a \emph{pair of spin unitaries}.
\end{definition}

The third of the three conditions in the definition of a spin system $\mathcal S$
indicates that no element of $\mathcal S$ is the identity matrix $1$ or its negative $-1$.
Hence, each $u\in\mathcal S$ has spectrum $\{-1,1\}$ and, by the Spectral Theorem, can be expressed as a difference 
$u=p-q$, where $p,q\in\M_d(\mathbb C)$ are projections such that $pq=qp=0$ and $p+q=1$.

The most basic example of a spin system of unitaries is afforded by the \emph{Pauli matrices}:  
\begin{eqnarray*}
\sigma_X=\begin{bmatrix}0&1\\1&0\end{bmatrix},\quad 
\sigma_Y=\begin{bmatrix}0&-\im\\\im&0\end{bmatrix},\quad\mbox{and}\quad
\sigma_Z=\begin{bmatrix}1&0\\0&-1\end{bmatrix}.
\end{eqnarray*}

\begin{definition} The subset $\mathcal P=\{\sigma_X,\sigma_Y,\sigma_Z\}\subset\M_2(\mathbb C)$ is called the \emph{Pauli spin system}.
\end{definition}

The following elementary (and, surely, well-known) result establishes some basic linear-algebraic facts about spin systems.

\begin{proposition}\label{prop:elementary} If $\mathcal S\subset\mathcal U_d$ is a spin system, then 
\begin {enumerate}
\item $d$ is an even integer,
\item $\tr(u)=0$, for every $u\in\mathcal S$, 
\item $\tr(uv)=0$ for all $u,v\in\mathcal S$ with $v\not=u$, and
\item the elements of $\mathcal S$ are linearly independent.
\end{enumerate}
\end{proposition}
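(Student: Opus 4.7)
The plan is to prove the four items essentially in the order given, with (2) being the key input for (1), and (3) being the key input for (4). The cardinality assumption $|\mathcal S|\geq 2$ is used precisely to guarantee that each $u\in\mathcal S$ admits a ``witness'' $v\in\mathcal S\setminus\{u\}$ with which to anticommute.

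For (2), I would fix $u\in\mathcal S$ and choose any $v\in\mathcal S$ with $v\neq u$. Since $v$ is a selfadjoint unitary, $v^{-1}=v^*=v$, so the anticommutation $uv=-vu$ rewrites as $vuv^{-1}=-u$. Applying the trace and using its similarity invariance gives $\tr(u)=\tr(-u)=-\tr(u)$, hence $\tr(u)=0$. For (1), the preceding item together with the spectral decomposition $u=p-q$ (where $p,q$ are orthogonal projections summing to $\unit$) implies that the $+1$- and $-1$-eigenspaces of $u$ have the same dimension, forcing $d$ to be even.

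Item (3) is essentially the same trick as (2) but applied directly to the product: given distinct $u,v\in\mathcal S$, the cyclicity of trace yields $\tr(uv)=\tr(vu)$, while the anticommutation relation yields $\tr(vu)=-\tr(uv)$, so $\tr(uv)=0$.

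For (4), the cleanest route is to observe that items (2) and (3) say exactly that the set $\{\unit\}\cup\mathcal S$ is orthogonal with respect to the Hilbert--Schmidt inner product $\langle A,B\rangle=\tr(B^*A)$ on $\M_d(\mathbb C)$: indeed, for distinct $u,v\in\mathcal S$ one has $\langle u,v\rangle=\tr(vu)=0$ by (3), while $\langle \unit,u\rangle=\tr(u)=0$ by (2), and $\langle u,u\rangle=\tr(\unit)=d>0$. An orthogonal set of nonzero vectors is linearly independent, which yields (4). I do not anticipate any genuine obstacle; the only subtlety worth flagging is that the argument for (2) quietly uses $|\mathcal S|\geq 2$, which is why this hypothesis appears in Definition \ref{ss defn}.
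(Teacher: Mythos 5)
Your argument is correct, and items (2), (3), and (4) coincide with the paper's proof: the same conjugation trick $vuv=-u$ for the trace, the same cyclicity argument for $\tr(uv)=0$, and the same observation that $\mathcal S$ is an orthogonal set of nonzero vectors for the Hilbert--Schmidt inner product (your inclusion of the identity in the orthogonal family is a harmless strengthening). The one genuine divergence is item (1). The paper proves evenness of $d$ independently of the trace computation, via determinants: from $uv=-vu$ one gets $\det(uv)=(-1)^d\det(uv)$, and since $\det(uv)\neq 0$ this forces $(-1)^d=1$. You instead derive (1) as a corollary of (2): writing $u=p-q$ with $p+q=1$ and $pq=0$, the identity $\tr(u)=\operatorname{rank}(p)-\operatorname{rank}(q)=0$ shows the two eigenspaces have equal dimension, so $d=2\operatorname{rank}(p)$. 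Both are sound; your route gives a tidier logical flow (one fewer independent computation, and it explains \emph{why} $d$ is even, namely that the $\pm 1$-eigenspaces are swapped into one another), while the paper's determinant argument has the minor virtue of not depending on the spectral decomposition or on item (2). Your closing remark that the hypothesis $|\mathcal S|\geq 2$ is exactly what supplies the anticommuting witness is accurate and worth making explicit.
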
 

\begin{proof} Select $u\in\mathcal S$. By definition, there is an element $v\in\mathcal U$ with $v\not= u$. Because $uv=-vu$, we deduce from
\[
\det(u)\det(v)=\det(uv)=\det(-1uv)=(-1)^d\det(uv)
\]
that $(-1)^d=1$, and so $d$ is even. 
Likewise,
$uv=-vu $ implies that $vuv=-u$, and so, using $\tr(ab)=\tr(ba)$ for all $a,b$, we have
\[
-\tr(u)=\tr(-u)=\tr(vuv)=\tr(uv^2)=\tr(u),
\]
which yields $\tr(u)=0$.

Likewise, if $u,v\in\mathcal S$ are distinct, then $uv=-vu$ implies that $\tr(uv)=-\tr(vu)$; hence, 
$\tr(uv)=-\tr(uv)$ and, thus, $\tr(uv)=0$. 

Lastly, in considering $\M_d(\mathbb C)$ as a Hilbert space with respect to the Hilbert-Schmidt inner product $\langle x,y\rangle=\tr(y^*x)$,
the linear independence of the elements of $\mathcal S$ follows from the fact (just proved) that any two 
spin unitaries $u,v\in\mathcal S$ are orthogonal.
%suppose that $u_1,\dots,u_m\in\mathcal S$ and that $\displaystyle\sum_{j=1}^m\alpha_ju_j=0$. Fix $i\in\{1,\dots,m\}$ and note that
%the trace property $\tr(ab)=\tr(ba)$ along with the anticommutation relations $u_ju_i=-u_iu_j$ (for $j\not= i$) yield
%\[
%\begin{array}{rcl}
%0&=&\tr(0\cdot u_i)=\tr\left( \displaystyle\sum_{j=1}^m\alpha_ju_ju_i\right) 
%= \alpha_id +\displaystyle\sum_{j\not= i} \alpha_j\tr(u_ju_i) \\
%&=& \alpha_id +\displaystyle\sum_{j\not= i} \alpha_j\tr(u_iu_j)
%= \alpha_id -\displaystyle\sum_{j\not= i} \alpha_j\tr(u_ju_i)   ,
%\end{array} 
%\]
%and so $\alpha_i=0$. As the choice of $i$ is arbitrary, each $\alpha_j=0$, showing that $u_1,\dots,u_m$ are linearly independent.
\end{proof}

A key concept in this paper is that of an operator system of matrices, and its matricial cone of positive semidefinite matrices.

\begin{definition} An \emph{operator system of matrices}, or more simply an \emph{operator system}, 
is a linear subspace $\osr$ of $\M_d(\mathbb C)$ 
and a sequence of convex cones $\left( \M_n(\osr)_+\right)_{n\in\mathbb N}$ in the matrix algebras $\M_n\left(\M_d(\mathbb C)\right)$
such that 
\begin{enumerate}
\item $\osr$ contains
the identity matrix $1$ (sometimes denoted by $1_d$), 
\item $\osr$ contains the adjoint $x^*$ of each matrix $x\in\osr$, and
\item $X\in \M_n(\osr)_+$ if and only if $X$ is an $n\times n$ positive semidefinite matrix with entries from $\osr$.
\end{enumerate}
\end{definition}

We note that the third aspect of the definition above is equivalent to the assertion that 
$X\in \M_n(\osr)_+$ if and only if $X$ is an $nd\times nd$ positive semidefinite matrix with entries from $\mathbb C$.

The definition of operator system given above applies only to matrices, but 
it is generally sufficient for our purposes in this paper. A somewhat more general definition 
is that an operator system is a unital linear subspace $\osr$ of 
a unital C$^*$-algebra $\A$ such that $x^*\in\osr$ for every $x\in\osr$. 
Most general of all is the axiomatic definition of an operator system as a 
matrix-ordered  $*$-vector space 
possessing an Archimedean order unit \cite{Paulsen-book}.

\begin{definition} If $\osr$ and $\ost$ are operator systems, then a linear transformation $\phi:\osr\rightarrow\ost$ is
\emph{$n$-positive} if $\phi^{(n)}(X)\in \M_n(\ost)_+$ for every $X\in\M_n(\osr)_+$, where
$\phi^{(n)}:\M_n(\osr)\rightarrow\M_n(\ost)$ is the linear map defined by
\[
\phi^{(n)}\left(\left[r_{ij}\right]_{i,j=1}^n\right)=\left[ \phi(r_{ij})\right]_{i,j=1}^n.
\]
Further:
\begin{enumerate}
\item $\phi$ is \emph{unital}, if $\phi(1_\osr)=1_\ost$ (i.e., $\phi$ maps the identity to the identity);
\item $\phi$ is \emph{positive}, if $\phi$ is $n$-positive for $n=1$; and
\item $\phi$ is \emph{completely positive}, if $\phi$ is $n$-positive for every $n\in\mathbb N$.
\end{enumerate}
\end{definition}

We turn next to the notion of
isomorphism in the category of operator systems and unital completely positive linear maps. 

\begin{definition} If $\osr$ and $\ost$ are operator systems, then a linear transformation $\phi:\osr\rightarrow\ost$ is
\begin{enumerate}
\item a \emph{unital complete order embedding} if $\phi$ is a unital, linear, completely positive, and injective map such that, 
for all $X\in\M_n(\osr)$ and all $n\in\mathbb N$, we have $\phi^{(n)}(X)\in\M_n(\ost)_+$ only if $X\in\M_n(\osr)_+$, and
\item 
a \emph{unital complete order isomorphism} if $\phi$ is a unital, linear bijection in which both $\phi$ and $\phi^{-1}$ are completely positive.
\end{enumerate}
\end{definition}

To illustrate the notion of complete order isomorphism, suppose that $w\in\mathcal U_d$ is any unitary matrix
and define the linear map $\phi_w:\M_d(\mathbb C)\rightarrow\M_d(\mathbb C)$ by $\phi_w(x)=w^*xw$, for all $x\in\M_d(\mathbb C)$.
Thus, $\phi_w$ is a unital complete order automorphism of $\M_d(\mathbb C)$. In fact, every unital complete automorphism of 
$\M_d(\mathbb C)$ arises from a unitary $w$ in this way; however, if $\osr$ and $\ost$ are operator subsystems of $\M_d(\mathbb C)$, then
there may exist unital complete order isomorphisms of $\osr$ and $\ost$ that do not arise from a unitary similarity transformation $\phi_w$.

\begin{definition} Suppose that $\mathcal S$ is a spin system of unitaries.
\begin{enumerate}
\item The \emph{spin operator system} generated by $\mathcal S$ is the linear space $\mathcal O_{\mathcal S}\subseteq\M_d(\mathbb C)$
defined by
\[
\mathcal O_{\mathcal S}=\mbox{\rm Span}\left\{1,u\,|\,u\in\mathcal S\right\}.
\]
\item The \emph{spin operator algebra} generated by $\mathcal S$ is the C$^*$-subalgebra $\mathcal A_{\mathcal S}\subseteq\M_d(\mathbb C)$
defined by
\[
\mathcal A_{\mathcal S}=\mbox{\rm Alg}\left(\mathcal O_{\mathcal S}\right).
\]
\end{enumerate}
\end{definition}

Because the elements of
a spin system $\mathcal S$ have trace zero, 
so does every linear combination of such elements; hence, the identity matrix
$1$ is linearly independent of $\mathcal S$.
Further,
the Pauli system $\mathcal P$ determines a $4$-dimensional operator system in the
$4$-dimensional matrix algebra $\M_2(\mathbb C)$, implying that
\[
\mathcal O_{\mathcal P}=\mathcal A_{\mathcal P}=\M_2(\mathbb C).
\]

Criteria for when two matrices are unitarily equivalent were given by a classical result of Specht \cite{specht1940}, as well as by others subsequently (see
\cite{shapiro1991} for a good survey). 
For operator systems, the concept of unital complete order isomorphism is weaker than the notion of unitary equivalence, which makes the following definition
of interest.
 
\begin{definition} A $k$-tuple $x=(x_1,\dots,x_k)$ of
matrices $x_j\in\M_{d}(\mathbb C)$ is \emph{completely order equivalent} to a $k$-tuple
$y=(y_1,\dots,y_k)$ of matrices 
 $y_j\in\M_\ell(\mathbb C)$ 
if there exists a unital complete order isomorphism 
\[
\phi:\mathcal O_x\rightarrow\mathcal O_y,
\]
where
$\mathcal O_x=\mbox{\rm Span}\{1_d,x_1,x_1^*,\dots,x_k,x_k^*\}$
and $\mathcal O_y=\mbox{\rm Span}\{1_\ell, y_1,y_1^*,\dots,y_k,y_k^*\}$, and $\phi(x_j)=y_j$ for $j=1, \dots, k$.
\end{definition} 

The definition of complete order equivalence of $k$-tuples of matrices
is related to that of interpolation by completely positive maps \cite{ambrozie--gheondea2015,li--poon2011}; however, a key difference
in our interpretation is that we require the interpolating maps to be complete order isomorphisms, not just completely positive maps.

We use the notation 
\[
x\simeq_{\rm ord}y
\]
to indicate that the $k$-tuples $x=(x_1,\dots,x_k)$ and
$y=(y_1,\dots,y_k)$ are completely order equivalent, and the notation 
\[
x\simeq_{\mathcal U} y
\]
to denote that the $k$-tuples $x=(x_1,\dots,x_k)$ and
$y=(y_1,\dots,y_k)$ are completely order equivalent via a unitary similarity transformation $x_j\rightarrow w^*x_j w$ for some unitary matrix $w$.

More generally, if $\osr$ and $\ost$ are operator systems of matrices in $\M_d(\mathbb C)$ and $\M_\ell(\mathbb C)$, respectively, 
then the notation 
$\osr\simeq_{\rm ord}\ost$ denotes the existence of 
a unital complete order isomorphism $\phi:\osr\rightarrow\ost$, while the notation
$\osr\simeq_{\mathcal U}\ost$ indicates that $\osr$ and $\ost$ are unitarily equivalent
(i.e., $\ell=d$ and there exists a unital complete order isomorphism $\phi:\osr\rightarrow\ost$ of the form $\phi(x)=w^*xw$, for some unitary $w\in\mathcal U_d$).

The following example, whose details we defer to the next section of this paper, is 
a good illustration of the information captured by these notions of equivalence.

\begin{example}\label{eg} If $u\in \mathcal U_{d_1}$ and $v\in \mathcal U_{d_2}$ are selfadjoint 
unitary matrices such that neither of them is a scalar multiple of the identity, then 
\begin{enumerate}
\item $u\simeq_{\rm ord} v$, and
\item $u\simeq_{\mathcal U} v$ if and only if $d_1=d_2$.
\end{enumerate}
\end{example}

The properties of anticommuting selfadjoint unitaries are not particular to matrices; indeed, these properties may
be present in arbitrary complex associative unital algebras with a positive involution $*$. 
With this understanding in mind, it is useful
to consider the most abstract form of a spin system: the universal algebra
\cite[\S II.8.3]{Blackadar-book}
that is defined purely from algebraic (rather than spatial) relations.

\begin{definition} Let $\mathcal G=\{\mathfrak u_1,\dots,\mathfrak u_m\}$ be a set of symbols and let
$\Omega$ be the set of relations
\[
\Omega=\{\mathfrak u_j^*=\mathfrak u_j\mbox{ and }\mathfrak u_j^2=1, \;\forall\,j,\mbox{ and }
\mathfrak u_i\mathfrak u_j + \mathfrak u_j \mathfrak u_i = 0, \;\forall\,j\not= i\}.
\]
\begin{enumerate}
\item
The universal C$^*$-algebra $\A_{{\rm spin}(m)}$ generated by $\mathcal G$ subject to the 
relations $\Omega$ is called the \emph{universal algebra of $m$ spin unitaries}.
\item The operator subsystem $\mathcal O_{{\rm spin}(m)}$ generated by $\mathcal G$ 
is called the \emph{universal operator system of $m$ spin unitaries}.
\end{enumerate}
\end{definition}

Universality leads immediately to the following result.

\begin{theorem}\label{u spin sys} If $\mathcal S=\{u_1,\dots,u_m\}\subset \mathcal U_d$ is a spin system, then 
there exists a unital completely positive linear map $\phi:\mathcal O_{{\rm spin}(m)}\rightarrow\mathcal O_\mathcal S$ such that 
$\phi(\mathfrak u_j)=u_j$, for every $j=1,\dots,m$.
\end{theorem}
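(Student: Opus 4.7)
The plan is to obtain $\phi$ as the restriction of a unital $*$-homomorphism between the ambient C$^*$-algebras, and then appeal to the standard fact that every unital $*$-homomorphism is unital completely positive.

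More precisely, I would first observe that the matrices $u_1,\dots,u_m\in\M_d(\mathbb C)$ satisfy exactly the relations $\Omega$ defining $\A_{{\rm spin}(m)}$: each $u_j$ is a selfadjoint unitary (so $u_j^*=u_j$ and $u_j^2=1$), and by Definition~\ref{ss defn} distinct pairs anticommute. By the universal property of $\A_{{\rm spin}(m)}$ as a universal C$^*$-algebra on generators and relations (see \cite[\S II.8.3]{Blackadar-book}), there exists a unital $*$-homomorphism
\[
\pi:\A_{{\rm spin}(m)}\longrightarrow \A_\mathcal S\subseteq \M_d(\mathbb C)
\]
such that $\pi(\mathfrak u_j)=u_j$ for every $j=1,\dots,m$.

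Next, I would set $\phi:=\pi|_{\mathcal O_{{\rm spin}(m)}}$. Because $\mathcal O_{{\rm spin}(m)}=\mbox{\rm Span}\{1,\mathfrak u_1,\dots,\mathfrak u_m\}$ and $\pi$ is unital and linear with $\pi(\mathfrak u_j)=u_j$, the image of $\phi$ is contained in $\mbox{\rm Span}\{1,u_1,\dots,u_m\}=\mathcal O_\mathcal S$. Thus $\phi:\mathcal O_{{\rm spin}(m)}\to\mathcal O_\mathcal S$ is well-defined, unital, and linear, and sends $\mathfrak u_j$ to $u_j$ as required. Finally, $\pi$ is unital completely positive (any unital $*$-homomorphism of C$^*$-algebras is), so its restriction $\phi$ to the operator subsystem $\mathcal O_{{\rm spin}(m)}$ is also unital completely positive.

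There is essentially no technical obstacle here; the only point requiring care is the existence of $\A_{{\rm spin}(m)}$ as a genuine C$^*$-algebra, but this is immediate because each generator is forced to be a unitary and thus admits the norm bound $\|\mathfrak u_j\|=1$, making the defining relations admissible in the sense of universal C$^*$-algebra constructions.
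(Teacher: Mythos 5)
Your proof is correct and follows the same route as the paper: invoke the universal property of $\A_{{\rm spin}(m)}$ to produce a unital $*$-homomorphism $\pi$ with $\pi(\mathfrak u_j)=u_j$, then restrict to the operator system and use the fact that $*$-homomorphisms are unital completely positive. The additional remarks you include (verifying the relations, admissibility of the norm bound, and that the image lands in $\mathcal O_{\mathcal S}$) are sound elaborations of details the paper leaves implicit.
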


\begin{proof}
As a universal C$^*$-algebra, the algebra $\A_{{\rm spin}(m)}$ has the property that, whenever 
$\mathcal S=\{u_1,\dots,u_m\}\subset \mathcal U_d$ are spin unitaries, there exists a C$^*$-algebra homomorphism
$\pi:\A_{{\rm spin}(m)}\rightarrow\M_d(\mathbb C)$ such that $\pi(\mathfrak u_j)=u_j$, for $j=1,\dots,m$.
Consequently, the linear map $\phi=\pi_{\vert\mathcal O_{{\rm spin}(m)}}$ is a unital completely positive linear map
of $\mathcal O_{{\rm spin}(m)}$ onto $\mathcal O_\mathcal S$ such that $\phi(\mathfrak u_j)=u_j$, for every $j=1,\dots,m$.
\end{proof}

Universal C$^*$-algebras are normally large objects; however, $\A_{{\rm spin}(m)}$ is a finite-dimensional
C$^*$-algebra. Indeed, $\A_{{\rm spin}(m)}$ is spanned by the identity $1$ and all products (of which there are finitely many) of the form
$\mathfrak u_{j_1}\mathfrak u_{j_2}\cdots\mathfrak u_{j_\ell}$, where $\ell\leq m$ and $j_1<j_2<\cdots<j_\ell$ \cite[Chapter 3]{Pisier-book}.

Our main results of the paper are the following theorem and its corollaries.

\begin{theorem}\label{main result}
If $\mathcal S\subset \mathcal U_d$ is a spin system of cardinality $m$, then 
\[
\mathcal O_\mathcal S \simeq_{\rm ord} \mathcal O_{{\rm spin}(m)}.
\]
More precisely, the unital completely positive linear map $\phi:\mathcal O_{{\rm spin}(m)}\rightarrow\mathcal O_\mathcal S$
in Proposition \ref{u spin sys} is a unital complete order isomorphism.
\end{theorem}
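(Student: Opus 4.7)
The plan is to upgrade the UCP map $\phi$ of Proposition \ref{u spin sys} to a unital complete order isomorphism. Two tasks remain: bijectivity and the construction of a UCP inverse. Bijectivity is a dimension count. Proposition \ref{prop:elementary} gives $\dim\mathcal O_\mathcal S=m+1$, and the same argument, applied inside any faithful representation of $\A_{{\rm spin}(m)}$ (for instance, an irreducible Clifford representation), yields $\dim\mathcal O_{{\rm spin}(m)}=m+1$ as well. Since $\phi$ is unital and carries a spanning set onto a spanning set, it is a linear bijection.

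The heart of the proof is the production of a UCP inverse $\psi:\mathcal O_\mathcal S\to\mathcal O_{{\rm spin}(m)}$ with $\psi(u_j)=\mathfrak u_j$, and I would exploit the classical Wedderburn theory of the finite-dimensional complex Clifford algebra $\A_{{\rm spin}(m)}$. Writing $m=2k$ in the even case and $m=2k+1$ in the odd case, one has $\A_{{\rm spin}(m)}\cong\M_{2^k}(\mathbb C)$ or $\M_{2^k}(\mathbb C)\oplus\M_{2^k}(\mathbb C)$ respectively. Fix a standard set of Clifford generators $\sigma_1,\dots,\sigma_m\in\M_{2^k}(\mathbb C)$. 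Semisimplicity, together with the universal property of $\A_{{\rm spin}(m)}$, forces the given spin system $(u_1,\dots,u_m)\subset\M_d(\mathbb C)$ to be unitarily similar to a direct sum of amplifications of the standard Clifford generators (with a single multiplicity in the even case, and possibly two distinct multiplicities in the odd case).

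In the even case $m=2k$, the situation is clean: after conjugating by a unitary $w\in\mathcal U_d$, one has $u_j=w^*(\sigma_j\otimes 1_r)w$ inside $\M_{2^k}(\mathbb C)\otimes\M_r(\mathbb C)=\M_d(\mathbb C)$. The UCP slice map $\id_{\M_{2^k}(\mathbb C)}\otimes\tau_r$, where $\tau_r$ is the tracial state on $\M_r(\mathbb C)$, pre-composed with conjugation by $w$ and post-composed with the identification $\M_{2^k}(\mathbb C)\cong\A_{{\rm spin}(m)}$, sends $u_j$ to $\mathfrak u_j$ and $1_d$ to $1$, and its restriction to $\mathcal O_\mathcal S$ supplies the sought-after inverse $\psi$.

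The main obstacle is the odd case $m=2k+1$, in which the two inequivalent irreducible Clifford representations may occur in the decomposition of $\mathcal S$ with unequal multiplicities---including the degenerate possibility that one of the two is missing altogether. The strategy I would adopt is to reduce to the even case by a dilation: inside $\M_d(\mathbb C)\otimes\M_2(\mathbb C)$, replace each $u_j$ by $u_j\otimes\sigma_Z$ and adjoin $1_d\otimes\sigma_X$, obtaining a spin system $\tilde{\mathcal S}$ of even cardinality $m+1=2k+2$, to which the even-case argument applies. The natural inclusion $\mathcal O_{{\rm spin}(m)}\hookrightarrow\mathcal O_{{\rm spin}(m+1)}$ is a unital complete order embedding because it is the restriction of the C$^*$-subalgebra inclusion $\A_{{\rm spin}(m)}\hookrightarrow\A_{{\rm spin}(m+1)}$, so it remains to show that the inverse produced for $\mathcal O_{\tilde{\mathcal S}}$ descends in a compatible way to yield a UCP map $\mathcal O_\mathcal S\to\mathcal O_{{\rm spin}(m)}$. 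Ensuring that this descent is well-defined and completely positive---in other words, that the asymmetry between the two central components of $\A_{{\rm spin}(m)}$ does not obstruct the construction---is where the main technical content of the proof lies.
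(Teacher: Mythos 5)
Your route is genuinely different from the paper's. The paper never invokes the Wedderburn decomposition of the Clifford algebra: it realizes $\A_{{\rm spin}(m)}$ inside some $\M_n(\mathbb C)$, extends $\phi$ by the Arveson extension theorem to a completely positive map $\Phi(z)=\sum_k a_k^*za_k$ on all of $\M_n(\mathbb C)$, and then uses the trace-orthogonality of $\{1,\mathfrak u_1,\dots,\mathfrak u_m\}$ and of $\{1,u_1,\dots,u_m\}$ (Proposition \ref{prop:elementary}) to argue that the rescaled Hilbert--Schmidt adjoint $z\mapsto\tfrac{n}{d}\sum_k a_kza_k^*$ is a completely positive left inverse of $\phi$. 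Your dimension count for bijectivity agrees with the paper's, and your even case ($\A_{{\rm spin}(2k)}\cong\M_{2^k}(\mathbb C)$ is simple, so after a unitary $u_j=w^*(\sigma_j\otimes 1_r)w$, and the slice map ${\rm id}\otimes\tau_r$ inverts $\phi$) is complete and correct.

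The odd case, however, is not a technicality you have merely postponed: the descent you need genuinely fails, already for $m=3$. Your map $\iota:\mathcal O_{\mathcal S}\rightarrow\mathcal O_{\tilde{\mathcal S}}$ sending $1_d\mapsto 1_d\otimes 1_2$ and $u_j\mapsto u_j\otimes\sigma_Z$ decomposes over the two eigenspaces of the second tensor factor as $x\mapsto x\oplus\theta(x)$, where $\theta$ is the unital linear map fixing $1_d$ and negating each $u_j$. Take $\mathcal S=\mathcal P$, so that $\mathcal O_{\mathcal P}=\M_2(\mathbb C)$ and $\theta(x)=\tr(x)1_2-x$, the standard example of a positive map that is not $2$-positive: the rank-one positive matrix $\left[\begin{smallmatrix}e_{11}&e_{12}\\ e_{21}&e_{22}\end{smallmatrix}\right]\in\M_2\bigl(\M_2(\mathbb C)\bigr)_+$ is carried by $\theta^{(2)}$ to $\left[\begin{smallmatrix}e_{22}&-e_{12}\\ -e_{21}&e_{11}\end{smallmatrix}\right]$, whose principal submatrix on rows and columns $1$ and $4$ is $\left[\begin{smallmatrix}0&-1\\ -1&0\end{smallmatrix}\right]$. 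Hence $\iota$ is not $2$-positive, and no precomposition of your even-case inverse with $\iota$ can produce the desired UCP map.

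Worse, the same computation shows that the claim itself is in trouble for odd $m$, so the gap cannot be closed by a cleverer descent. In the faithful representation $\A_{{\rm spin}(3)}\cong\M_2(\mathbb C)\oplus\M_2(\mathbb C)$ one has $\mathfrak u_j=\sigma_j\oplus(-\sigma_j)$, so the canonical map $\phi:\mathcal O_{{\rm spin}(3)}\rightarrow\mathcal O_{\mathcal P}$ is compression to the first summand and its linear inverse is exactly $x\mapsto x\oplus\theta(x)$, which the example above shows is not $2$-positive; equivalently, the spin triples $(\sigma_X,\sigma_Y,\sigma_Z)$ and $(\sigma_X\otimes\sigma_Z,\sigma_Y\otimes\sigma_Z,\sigma_Z\otimes\sigma_Z)$ are not completely order equivalent via the map matching generators. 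Your structural analysis thus isolates a real obstruction that the paper's own argument passes over: in the paper's proof, the identity $\tr_n(xy)=\tfrac{n}{d}\tr_n\bigl(x\sum_{i,j}a_ia_j^*ya_ja_i^*\bigr)$ for all $x\in\mathcal O_{{\rm spin}(m)}$ is used to conclude that $y=\tfrac{n}{d}\sum_{i,j}a_ia_j^*ya_ja_i^*$, but this inference requires the right-hand side to lie in $\mathcal O_{{\rm spin}(m)}$, which is not established; in general the trace pairing only recovers the orthogonal projection of that element onto $\mathcal O_{{\rm spin}(m)}$. The honest conclusion of your approach is the theorem for even $m$, together with the odd case under the additional hypothesis that both irreducible Clifford representations occur in the decomposition of $\mathcal S$ (in which case the map $\mathfrak u_j\mapsto u_j$ and its inverse are built from $*$-homomorphisms and slice maps, exactly as in your even case).
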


\begin{corollary}\label{main result cor1} If $u=(u_1,\dots,u_m)$ and $v=(v_1,\dots,v_m)$ are $m$-tuples of spin unitary matrices $u_j\in\mathcal U_{d_1}$, 
$v_k\in\mathcal U_{d_2}$, then
$u\simeq_{\rm ord} v$. 
\end{corollary}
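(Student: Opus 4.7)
The plan is to deduce the corollary as a direct consequence of Theorem \ref{main result} by viewing $\mathcal{O}_{{\rm spin}(m)}$ as a universal object through which we can pass. First, let $\mathcal{S}_u=\{u_1,\dots,u_m\}\subset\mathcal{U}_{d_1}$ and $\mathcal{S}_v=\{v_1,\dots,v_m\}\subset\mathcal{U}_{d_2}$ be the underlying spin systems of the two tuples. (Note that the elements of each tuple are automatically distinct, since if $u_j=u_k$ for $j\ne k$ then the anticommutation relation would give $1=u_j^2=u_ju_k=-u_ku_j=-1$; hence each tuple genuinely indexes a spin system of cardinality $m$.)

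Next, I would apply Theorem \ref{main result} twice, once to $\mathcal{S}_u$ and once to $\mathcal{S}_v$. This yields unital complete order isomorphisms
\[
\phi_u:\mathcal{O}_{{\rm spin}(m)}\longrightarrow\mathcal{O}_{\mathcal{S}_u}\quad\text{and}\quad \phi_v:\mathcal{O}_{{\rm spin}(m)}\longrightarrow\mathcal{O}_{\mathcal{S}_v}
\]
which, by the explicit form given in Proposition \ref{u spin sys}, satisfy $\phi_u(\mathfrak{u}_j)=u_j$ and $\phi_v(\mathfrak{u}_j)=v_j$ for every $j=1,\dots,m$.

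Finally, I would form the composition $\psi:=\phi_v\circ\phi_u^{-1}:\mathcal{O}_{\mathcal{S}_u}\to\mathcal{O}_{\mathcal{S}_v}$. Being a composition of unital complete order isomorphisms, $\psi$ is itself a unital complete order isomorphism, and by construction
\[
\psi(u_j)=\phi_v\bigl(\phi_u^{-1}(u_j)\bigr)=\phi_v(\mathfrak{u}_j)=v_j,\qquad j=1,\dots,m.
\]
Since $\mathcal{O}_{\mathcal{S}_u}=\mathcal{O}_u$ and $\mathcal{O}_{\mathcal{S}_v}=\mathcal{O}_v$ (the spanning sets coincide because each $u_j$ and each $v_j$ is selfadjoint), the map $\psi$ witnesses $u\simeq_{\rm ord}v$.

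There is essentially no obstacle here beyond Theorem \ref{main result} itself; the only subtlety worth flagging is the verification that the tuples genuinely determine spin systems of the same cardinality $m$, so that the universal system $\mathcal{O}_{{\rm spin}(m)}$ can serve as a common intermediary for both applications of the main theorem.
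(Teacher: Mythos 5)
Your proposal is correct and is essentially the paper's own argument: the paper simply invokes transitivity of $\simeq_{\rm ord}$, which is precisely the composition $\phi_v\circ\phi_u^{-1}$ that you write out explicitly. The extra remarks (distinctness of the $u_j$, tracking the generators through the composition) are fine but not needed beyond what Theorem \ref{main result} already supplies.
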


We say that a tuple $x=(x_1,\dots,x_m)$ of $d\times d$ matrices is \emph{irreducible} if the only $d\times d$
matrices that commute with each $x_j$ are scalar multiples of the identity matrix.

\begin{corollary}\label{main result cor3} If $\M_d(\mathbb C)$ contains an irreducible $m$-tuple
$u=(u_1,\dots,u_m)$ of spin unitaries, then every $m$-tuple $v=(v_1,\dots,v_m)$ of $d\times d$ spin unitaries is also irreducible and
$u\simeq_{\mathcal U}v$.
\end{corollary}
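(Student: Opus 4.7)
The plan is to upgrade the unital complete order isomorphism produced by Corollary~\ref{main result cor1} into a unitary similarity by passing to the $\cstar$-envelope. First, Corollary~\ref{main result cor1} supplies a unital complete order isomorphism $\phi:\mathcal O_u\to\mathcal O_v$ with $\phi(u_j)=v_j$ for $j=1,\dots,m$. Because $u$ is irreducible by hypothesis, the double commutant theorem (equivalently Burnside's theorem) implies that the unital $*$-subalgebra of $\M_d(\mathbb C)$ generated by $\mathcal O_u$ is all of $\M_d(\mathbb C)$; and since $\M_d(\mathbb C)$ is simple it admits no nonzero Shilov boundary ideal, so $\cstare(\mathcal O_u)=\cstar(\mathcal O_u)=\M_d(\mathbb C)$.

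The key step is to invoke the universal property of the $\cstar$-envelope (Hamana/Arveson): the unital complete order isomorphism $\phi$ lifts uniquely to a unital $*$-isomorphism $\widetilde\phi:\M_d(\mathbb C)\to\cstare(\mathcal O_v)$. A dimension count then identifies the other side. We have $\dim\cstare(\mathcal O_v)=d^2$, yet $\cstare(\mathcal O_v)$ is a quotient of the unital $*$-subalgebra $\cstar(\mathcal O_v)\subseteq\M_d(\mathbb C)$, whose dimension is at most $d^2$; hence all three dimensions coincide and $\cstar(\mathcal O_v)=\cstare(\mathcal O_v)=\M_d(\mathbb C)$. In particular, $\{v_1,\dots,v_m\}$ generates $\M_d(\mathbb C)$, so $v$ is irreducible.

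Finally, $\widetilde\phi$ is a unital $*$-automorphism of the simple matrix algebra $\M_d(\mathbb C)$ and is therefore inner by the Skolem--Noether theorem: there exists $w\in\mathcal U_d$ with $\widetilde\phi(x)=w^*xw$ for every $x\in\M_d(\mathbb C)$. Restricting to the generators yields $v_j=\phi(u_j)=\widetilde\phi(u_j)=w^*u_jw$ for each $j$, which is precisely the unitary similarity $u\simeq_{\mathcal U}v$. The main obstacle is the lifting step: the fact that a unital complete order isomorphism between operator systems extends canonically to a $*$-isomorphism of their $\cstar$-envelopes is the nontrivial ingredient, coming from the Hamana--Arveson theory; once it is in hand the remainder of the argument is a short dimension count followed by Skolem--Noether.
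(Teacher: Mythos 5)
Your argument is correct, but it routes through the C$^*$-envelope rather than through the paper's extension-and-rigidity argument. The paper extends both $\phi$ and $\phi^{-1}$ to ucp maps $\Phi,\Psi$ on $\M_d(\mathbb C)$ by the Arveson Extension Theorem, notes that $\Psi\circ\Phi$ extends $\mathrm{id}_{\mathcal O_u}$, invokes Arveson's Boundary Theorem (irreducibility of $\mathcal O_u$ forces the ucp extension of the identity to be unique, hence $\Psi\circ\Phi=\mathrm{id}_{\M_d(\mathbb C)}$), and then concludes that a ucp map with completely positive inverse is a unitary conjugation; irreducibility of $v$ falls out at the end. You instead compute $\cstare(\mathcal O_u)=\M_d(\mathbb C)$ from irreducibility and simplicity, use the functoriality of the C$^*$-envelope to promote $\phi$ to a $*$-isomorphism of envelopes, read off the irreducibility of $v$ from a dimension count, and finish with Skolem--Noether. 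Both proofs lean on the same Hamana--Arveson circle of ideas, and your heavy step -- that a unital complete order isomorphism induces a $*$-isomorphism of C$^*$-envelopes \emph{extending} it -- is a mild strengthening of the paper's Proposition 4.1(1) (which asserts only equality of the envelopes); it does follow from applying the universal property in both directions and composing, but you should say so, since the lift's existence, not just its uniqueness, is what carries the argument. What your route buys is a clean, self-contained derivation of the irreducibility of $v$ (the dimension count) independent of the unitary; what the paper's route buys is that it never needs to identify $\cstare(\mathcal O_v)$ abstractly, working entirely with concrete ucp maps on $\M_d(\mathbb C)$.
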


\begin{corollary}\label{main result cor2} If $u,v,w\in\mathcal U_d$ are anticommuting selfadjoint unitary matrices, then there exists a $k\in\mathbb N$
and subspace $\mathcal L\subseteq\mathbb C^2\otimes\mathbb C^k$ such that
$u$, $v$, and $w$ are compressions to $\mathcal L$ of, respectively, $\sigma_X\otimes 1_k$, $\sigma_Y\otimes 1_k$, and $\sigma_Z\otimes 1_k$. 
\end{corollary}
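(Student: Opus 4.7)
The plan is to combine Corollary \ref{main result cor1} with Stinespring's dilation theorem. The three Pauli matrices $\sigma_X,\sigma_Y,\sigma_Z$ form a spin system of cardinality $3$ in $\M_2(\mathbb C)$ (they are selfadjoint unitaries and pairwise anticommute), and the given triple $(u,v,w)$ is likewise a spin system of cardinality $3$ in $\M_d(\mathbb C)$. Applying Corollary \ref{main result cor1} to these two triples produces a unital complete order isomorphism
\[
\phi: \mathcal O_{\{\sigma_X,\sigma_Y,\sigma_Z\}} \longrightarrow \mathcal O_{\{u,v,w\}}
\]
with $\phi(\sigma_X)=u$, $\phi(\sigma_Y)=v$, and $\phi(\sigma_Z)=w$. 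Because the Pauli system already spans $\M_2(\mathbb C)$, as noted in the paper, the domain of $\phi$ is all of $\M_2(\mathbb C)$, so $\phi$ is a unital completely positive linear map from $\M_2(\mathbb C)$ into $\M_d(\mathbb C)$.

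Next I invoke Stinespring's theorem to dilate $\phi$: there exist a Hilbert space $\mathcal K$, a unital $*$-representation $\pi:\M_2(\mathbb C)\to \mathcal B(\mathcal K)$, and an isometry $V:\mathbb C^d\to\mathcal K$ such that $\phi(x) = V^*\pi(x)V$ for every $x\in\M_2(\mathbb C)$. Since $\M_2(\mathbb C)$ is simple and finite-dimensional, every unital $*$-representation of it is, up to unitary equivalence, a multiple of the identity representation. Consequently one may take $\mathcal K = \mathbb C^2\otimes\mathbb C^k$ for some $k\in\mathbb N$ and $\pi(x) = x\otimes 1_k$.

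Setting $\mathcal L = V(\mathbb C^d) \subseteq \mathbb C^2\otimes\mathbb C^k$ and identifying $\mathbb C^d$ with $\mathcal L$ via the isometry $V$, the compression of $\sigma_X\otimes 1_k$ to $\mathcal L$ equals $V^*(\sigma_X\otimes 1_k)V = \phi(\sigma_X) = u$, and similarly the compressions of $\sigma_Y\otimes 1_k$ and $\sigma_Z\otimes 1_k$ to $\mathcal L$ are $v$ and $w$ respectively, which is the desired conclusion.

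There is no real obstacle beyond assembling these standard pieces; the only thing to verify carefully is that the Stinespring representation $\pi$ of $\M_2(\mathbb C)$ can be realized concretely as $x\mapsto x\otimes 1_k$ on $\mathbb C^2\otimes\mathbb C^k$, which follows from the uniqueness (up to multiplicity) of the irreducible representation of the simple matrix algebra $\M_2(\mathbb C)$. Everything else is a direct consequence of the main theorem applied to $m=3$.
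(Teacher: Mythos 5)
Your proof is correct and takes essentially the same route as the paper's (which appears as the dilation proposition in the section on free spectrahedra): Corollary \ref{main result cor1}, then Stinespring, then the fact that every unital representation of $\M_2(\mathbb C)$ is a multiple of the identity representation. The only difference is that the paper detours through matrix ranges (Theorem \ref{mr thm}) to produce the ucp map $\M_2(\mathbb C)\rightarrow\M_d(\mathbb C)$, whereas you observe directly that the complete order isomorphism already has domain equal to all of $\M_2(\mathbb C)$ --- a harmless streamlining.
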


If one has an operator system of matrices $\osr\subseteq\M_d(\mathbb C)$ 
and a unital complete order embedding $\phi:\osr\rightarrow\A$ into some unital C$^*$-algebra $\A$, then
the C$^*$-subalgebra $\cstar\left(\phi(\osr)\right)$ of $\A$ generated by $\phi(\osr)$ need not be 
isomorphic to the C$^*$-subalgebra $\cstar(\osr)$ of $\M_d(\mathbb C)$ generated by $\osr$. Because, in the category of operator
systems, we do not distinguish between $\osr$ and any unital complete order isomorphic copy of $\osr$, a single
operator system $\osr$ can, in principle, generate many non-isomorphic C$^*$-algebras. However, there
always exists ``smallest" such algebra, which is known as the C$^*$-envelope
of $\osr$.

\begin{theorem}[Hamana]\label{hamana}{\rm (\cite{hamana1979b,Paulsen-book})}
If $\osr$ is an operator system, then there exists a unital C$^*$-algebra $\mathcal A_{\rm e}$ and unital complete order embedding
$\iota_{\rm e}:\osr\rightarrow\A_{\rm e}$ such that $\iota_{\rm e}(\osr)$ generates $\A_{\rm e}$ and such that, if $\phi:\osr\rightarrow\A$ is any
unital complete order embedding of $\osr$ into a unital C$^*$-algebra $\A$, then there is a surjective C$^*$-algebra homomorphism
$\pi:\A\rightarrow\A_{\rm e}$ such that $\iota_{\rm e}=\pi\circ\phi$.
\end{theorem}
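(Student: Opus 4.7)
The plan is to follow Hamana's route via injective envelopes. Recall that an operator system $\ose$ is \emph{injective} if, for every inclusion $\osf\subseteq\osf'$ of operator systems, every unital completely positive map $\osf\to\ose$ extends to a unital completely positive map on $\osf'$. After fixing a faithful unital representation $\osr\subseteq B(\H)$, Arveson's extension theorem makes $B(\H)$ injective, so $\osr$ sits inside an injective operator system. The first task is to extract from $B(\H)$ an \emph{injective envelope} $I(\osr)$: an injective operator system with $\osr\subseteq I(\osr)\subseteq B(\H)$ that is minimal in an appropriate sense. This is achieved by Zorn's lemma applied to the family of unital completely positive idempotents $\rho : B(\H)\to B(\H)$ that fix $\osr$ pointwise, ordered by $\rho\preceq\rho'$ when $\rho\circ\rho'=\rho'\circ\rho=\rho$. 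A minimal such $\rho$ exists, and one sets $I(\osr):=\rho(B(\H))$.

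The critical technical output of the minimization is the pair of properties called \emph{rigidity} (the identity is the only unital completely positive self-map of $I(\osr)$ that fixes $\osr$ pointwise) and its strengthening \emph{essentiality} (any unital completely positive map on $I(\osr)$ whose restriction to $\osr$ is a complete order embedding is itself a complete order embedding on all of $I(\osr)$). Granted these, one next upgrades $I(\osr)$ to a unital C$^*$-algebra via the Choi--Effros theorem: since $I(\osr)$ is the range of a unital completely positive idempotent $\rho$ on the C$^*$-algebra $B(\H)$, the formula $x\cdot y:=\rho(xy)$ defines an associative product under which $I(\osr)$, with its ambient involution, becomes a unital C$^*$-algebra whose matricial order agrees with the one inherited from $B(\H)$. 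Define $\A_{\rm e}$ to be the C$^*$-subalgebra of $I(\osr)$ generated under $\cdot$ by $\osr$, and let $\iota_{\rm e}:\osr\hookrightarrow\A_{\rm e}$ be the inclusion.

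The universal property is then verified as follows. Given a unital complete order embedding $\phi:\osr\to\A$ into a unital C$^*$-algebra, the composition $\iota_{\rm e}\circ\phi^{-1}:\phi(\osr)\to I(\osr)$ is unital and completely positive; by injectivity of $I(\osr)$ it extends to a unital completely positive map $\Phi:\A\to I(\osr)$. Since $\Phi\circ\phi=\iota_{\rm e}$ is a complete order embedding on $\osr$, essentiality forces $\Phi$ to be completely isometric on $\cstar(\phi(\osr))$, and Choi's theorem on the multiplicative domain of a unital completely positive map then implies that $\Phi$ is $*$-multiplicative on $\cstar(\phi(\osr))$. Its restriction is thus a surjective $*$-homomorphism onto $\A_{\rm e}$, from which the desired surjection $\pi:\A\to\A_{\rm e}$ with $\iota_{\rm e}=\pi\circ\phi$ is obtained.

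The main obstacle in this program is establishing rigidity and essentiality of $I(\osr)$, for these do not follow formally from injectivity: they require a Schwarz-type inequality for unital completely positive maps, combined with the Zorn-minimality of $\rho$, to rule out non-trivial unital completely positive self-maps of $I(\osr)$ that fix $\osr$. These two properties are precisely what converts the order-theoretic envelope $I(\osr)$ into a C$^*$-algebra under the Choi--Effros product and promotes the completely positive extension $\Phi$ to the $*$-homomorphism $\pi$ demanded by the statement; without them one would only obtain a ucp map rather than the quotient C$^*$-homomorphism.
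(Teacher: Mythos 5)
The paper offers no proof of this statement: it is quoted verbatim from Hamana's work and from Paulsen's book, so there is no in-paper argument to compare against. Your outline is precisely the argument of those cited sources --- injective envelope inside $\B(\H)$ via Arveson extension and a Zorn minimization, rigidity, the Choi--Effros product on the range of a ucp idempotent, $\A_{\rm e}=\cstar(\osr)$ inside $I(\osr)$, and the universal property via extension plus a multiplicative-domain argument --- and in that sense it is the ``same'' proof.

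Two points in the sketch need repair before it is airtight. First, the Zorn step: a point-weak$^*$ cluster point of a chain of ucp \emph{idempotents} fixing $\osr$ (under your ordering) need not itself be idempotent, so one cannot apply Zorn directly to idempotents; the standard route is to minimize over all $\osr$-maps (or the induced $\osr$-seminorms) and then prove separately that a minimal $\osr$-map is a projection. Second, and more substantively, your appeal to essentiality does not parse: essentiality as you state it governs ucp maps \emph{defined on} $I(\osr)$, whereas $\Phi$ is defined on $\A$, so you cannot invoke it to conclude that $\Phi$ is completely isometric on $\cstar(\phi(\osr))$. The standard fix is to extend $\phi$ itself to a ucp map $\delta:I(\osr)\rightarrow \B(\K)$ where $\A\subseteq\B(\K)$, observe that $\Phi\circ\delta$ is a ucp self-map of $I(\osr)$ fixing $\osr$, conclude $\Phi\circ\delta=\mathrm{id}_{I(\osr)}$ by rigidity, and only then run the Schwarz/multiplicative-domain argument to see that $\Phi$ restricted to $\cstar(\phi(\osr))$ is a surjective $*$-homomorphism onto $\A_{\rm e}$. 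With those two adjustments your proposal is exactly the proof the paper is citing.
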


The algebra $\A_e$ in Hamana's Theorem is unique up to isomorphism; thus, we denote $\A_{\rm e}$ by $\cstare(\osr)$ and
say that $\cstare(\osr)$ is the \emph{C$^*$-envelope} of $\osr$.

Using our main result, Theorem \ref{main result}, we shall also prove the following theorem.

\begin{theorem}\label{cstare} For every $k\in\mathbb N$, 
$\cstare(\mathcal O_{{\rm spin}(2k)}) \cong \cstare(\mathcal O_{{\rm spin}(2k+1)}) \cong \M_{2^k}(\mathbb C)$.
\end{theorem}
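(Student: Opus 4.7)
The plan is to realise $\mathcal{O}_{{\rm spin}(2k)}$ and $\mathcal{O}_{{\rm spin}(2k+1)}$ concretely inside $\M_{2^k}(\mathbb{C})$ as spin operator systems whose generated C$^*$-algebra is all of $\M_{2^k}(\mathbb{C})$. Once this is achieved, Theorem~\ref{main result} identifies the two abstract spin operator systems with the concrete ones up to unital complete order isomorphism, and hence identifies their C$^*$-envelopes; a short application of Theorem~\ref{hamana} together with the simplicity of $\M_{2^k}(\mathbb{C})$ then forces both C$^*$-envelopes to be $\M_{2^k}(\mathbb{C})$ itself.

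For the concrete construction I would use the Jordan--Wigner / Brauer realisation of the Clifford generators inside $\M_2(\mathbb{C})^{\otimes k}$: for $1 \leq j \leq k$, set
\[
u_{2j-1} \;=\; \sigma_Z^{\otimes (j-1)} \otimes \sigma_X \otimes 1^{\otimes (k-j)},\qquad
u_{2j} \;=\; \sigma_Z^{\otimes (j-1)} \otimes \sigma_Y \otimes 1^{\otimes (k-j)},
\]
and put $u_{2k+1} = \sigma_Z^{\otimes k}$. Using only $\sigma_X\sigma_Y = \im \sigma_Z$ and the pairwise anticommutativity of the Pauli matrices, a direct verification shows that $\mathcal{S} = \{u_1,\dots,u_{2k+1}\}$ and its subset $\mathcal{S}' = \{u_1,\dots,u_{2k}\}$ are spin systems in $\M_{2^k}(\mathbb{C})$. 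Moreover, the $2^{2k}$ ordered products $u_{j_1}\cdots u_{j_\ell}$ with $1 \leq j_1 < \cdots < j_\ell \leq 2k$ coincide, up to unimodular scalars, with the standard tensor monomials in $\{1,\sigma_X,\sigma_Y,\sigma_Z\}$ inside $\M_2(\mathbb{C})^{\otimes k}$; these are pairwise Hilbert--Schmidt orthogonal and therefore form a linear basis of $\M_{2^k}(\mathbb{C})$. Consequently $\cstar(\mathcal{O}_{\mathcal{S}'}) = \cstar(\mathcal{O}_{\mathcal{S}}) = \M_{2^k}(\mathbb{C})$.

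To finish, fix $m \in \{2k,2k+1\}$ and let $\mathcal{T} \subseteq \mathcal{S}$ be the corresponding spin system of cardinality $m$. Theorem~\ref{main result} supplies $\mathcal{O}_{{\rm spin}(m)} \simeq_{\rm ord} \mathcal{O}_{\mathcal{T}}$, and so $\cstare(\mathcal{O}_{{\rm spin}(m)}) \cong \cstare(\mathcal{O}_{\mathcal{T}})$. The inclusion $\iota : \mathcal{O}_{\mathcal{T}} \hookrightarrow \M_{2^k}(\mathbb{C})$ is a unital complete order embedding whose image generates $\M_{2^k}(\mathbb{C})$, so Theorem~\ref{hamana} produces a surjective unital $*$-homomorphism $\pi : \M_{2^k}(\mathbb{C}) \to \cstare(\mathcal{O}_{\mathcal{T}})$ with $\pi \circ \iota = \iota_{\rm e}$. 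Since $\pi(1) = \iota_{\rm e}(1) = 1 \neq 0$, the kernel of $\pi$ is a proper two-sided ideal of the simple algebra $\M_{2^k}(\mathbb{C})$, hence trivial, and $\pi$ is an isomorphism. The only substantive step is the linear-independence assertion for the Clifford monomials, which is immediate from the tensor-product structure of the $u_j$; the remainder of the argument is then essentially formal.
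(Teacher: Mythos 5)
Your proof is correct, and its overall architecture is the same as the paper's: realise the abstract spin operator systems concretely inside $\M_2(\mathbb C)^{\otimes k}$, check that the generated C$^*$-algebra is all of $\M_{2^k}(\mathbb C)$, and then combine Theorem \ref{main result} with Hamana's theorem and the simplicity of $\M_{2^k}(\mathbb C)$. Indeed, your closed-form Jordan--Wigner generators are exactly the spin system $\mathcal Q_k$ that the paper builds iteratively via Lemma \ref{existence of irr ss}. The one genuine difference is how the full-algebra claim is verified. The paper proves irreducibility of $\mathcal Q_k$ by a commutant computation (so $\cstar(\mathcal O_{\mathcal Q_k})=\M_{2^k}(\mathbb C)$ by the double commutant theorem), and then handles the even case $2k$ separately by observing that $\bigotimes_1^k\sigma_Z$ is, up to a power of $i$, the product of the $k$ pairs $w_{2j-1}w_{2j}$, so that dropping the last generator does not shrink the algebra. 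You instead show directly that the $2^{2k}$ ordered Clifford monomials in $u_1,\dots,u_{2k}$ are, up to unimodular scalars, the $4^k$ Pauli tensor monomials, which are Hilbert--Schmidt orthogonal and hence span $\M_{2^k}(\mathbb C)$; this treats the even and odd cases uniformly and avoids the commutant calculation. The only point a careful reader would want spelled out is the injectivity of the map from subsets of $\{1,\dots,2k\}$ to Pauli monomials (recoverable slot by slot from the last tensor factor backwards), but this is routine and your argument goes through.
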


Lastly, results such as Theorem \ref{main result} and Corollary \ref{main result cor1} touch upon free convexity theory
\cite{kriel2019}. We defer the pertinent definitions and discussion until later, and simply
mention here that in the present paper 
we show that the free spectrahedra determined by spin unitaries depend only upon the number of the unitaries, not upon the particular
choice unitaries, and
we give a new, direct proof of the following result.

\begin{theorem}{\rm (\cite[Corollary 14.15]{helton--klep--mcculllough--schweighofer2019})}%\label{spin ball thm}
The free spectrahedron $\mathfrak B_2^{\rm spin}$ and the max ball $\mathfrak B_2^{\rm max}$ coincide.
\end{theorem}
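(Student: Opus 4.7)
My plan is to use Theorem \ref{main result} to realize $\mathfrak B_2^{\rm spin}$ via the Pauli pair $(\sigma_X,\sigma_Y)$, and then to reduce the spin inequality to the single statement that $X_1+iX_2$ is a contraction. By Theorem \ref{main result}, the spin operator system $\mathcal O_{{\rm spin}(2)}$ is completely order isomorphic to $\mathrm{Span}\{1,\sigma_X,\sigma_Y\}\subseteq\M_2(\mathbb C)$, so the free spectrahedron $\mathfrak B_2^{\rm spin}(n)$ may be taken to be the set of pairs $(X_1,X_2)\in\M_n(\mathbb C)_{\rm sa}^2$ such that $\|X_1\otimes\sigma_X+X_2\otimes\sigma_Y\|\le 1$.

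The first step I would carry out is the short computation, using $\sigma_X^2=\sigma_Y^2=1$, $\sigma_X\sigma_Y=i\sigma_Z$, and $\sigma_Y\sigma_X=-i\sigma_Z$, that
\[
(X_1\otimes\sigma_X+X_2\otimes\sigma_Y)^2 = (X_1^2+X_2^2)\otimes 1_2 + i[X_1,X_2]\otimes\sigma_Z.
\]
Diagonalizing $\sigma_Z$ splits the right-hand side into the two Hermitian blocks $X_1^2+X_2^2\pm i[X_1,X_2]$, which are precisely $(X_1+iX_2)^*(X_1+iX_2)$ and $(X_1+iX_2)(X_1+iX_2)^*$. Hence the spin inequality is equivalent to $T:=X_1+iX_2$ being a contraction.

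Next, I would invoke the standard matrix-convex description of the max ball: $(X_1,X_2)\in\mathfrak B_2^{\rm max}(n)$ if and only if $(X_1,X_2)$ admits a dilation to a pair of commuting selfadjoint operators $(Y_1,Y_2)$ with $Y_1^2+Y_2^2\le I$; equivalently, $T$ is the compression of the normal contraction $Y_1+iY_2$. The Sz.-Nagy dilation theorem now closes the argument: $T$ is a contraction if and only if it admits a unitary dilation $U$, and writing $U=Y_1+iY_2$ for its real and imaginary parts produces commuting selfadjoints with $Y_1^2+Y_2^2=I$ that dilate $(X_1,X_2)$. Conversely, compressions of normal contractions are contractions, yielding the reverse inclusion.

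I expect the main obstacle to be bookkeeping in Step~1: one must carefully pass from the operator-matrix norm inequality to the two block inequalities and identify the blocks with $T^*T$ and $TT^*$. Once this is done, the matrix-convex/Sz.-Nagy dictionary is classical, and the role of Theorem~\ref{main result} is simply to license the clean Pauli realization at the outset, ensuring that the free spectrahedron is independent of the particular choice of anticommuting pair.
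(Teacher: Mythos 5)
Your argument is essentially correct, but it is a genuinely different proof from the one in the paper. The paper's hard direction ($\mathfrak B_2^{\rm spin}\subseteq\mathfrak B_2^{\rm max}$) never dilates $(X_1,X_2)$: instead, for each admissible pair $(a_1,a_2)$ with $W_{\rm s}^1(a)\subseteq\mathbb B_2$ it uses Ando's numerical-radius theorem and Choi's criterion to manufacture a ucp map $\psi:\M_2(\mathbb C)\to\M_\ell(\mathbb C)$ with $\psi(\sigma_X)=a_1$, $\psi(\sigma_Y)=a_2$, and then applies $\mathrm{id}\otimes\psi$ to the Pauli pencil; the easy direction is immediate from Proposition \ref{criterion} because $W_{\rm s}^1(\sigma_X,\sigma_Y)$ is the closed disc. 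You instead identify $\mathfrak B_2^{\rm spin}(n)$ with $\{\,\|X_1+iX_2\|\le 1\,\}$ and run everything through the Sz.-Nagy/Halmos unitary dilation of $T=X_1+iX_2$. That is closer in spirit to the two dilation proofs in \cite{helton--klep--mcculllough--schweighofer2019} that this paper is explicitly offering an alternative to, but it is a clean and valid argument, and your block computation $S^2=(X_1^2+X_2^2)\otimes 1_2+i[X_1,X_2]\otimes\sigma_Z$ with blocks $T^*T$ and $TT^*$ is a nice reduction the paper does not make explicit.

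Two points need attention. First, the definition gives $1-S\ge 0$ where $S=X_1\otimes\sigma_X+X_2\otimes\sigma_Y$, not $\|S\|\le 1$; you should note that conjugation by $1_n\otimes\sigma_Z$ sends $S$ to $-S$, so $S\le 1$ and $\|S\|\le 1$ coincide here. Second, the ``standard matrix-convex description of the max ball'' as the set of tuples admitting a commuting selfadjoint dilation with $Y_1^2+Y_2^2\le 1$ is itself a theorem (its nontrivial half rests on Effros--Winkler separation) that the paper neither states nor proves; as written, you invoke its hard half precisely for the inclusion $\mathfrak B_2^{\rm max}\subseteq\mathfrak B_2^{\rm spin}$. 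You can avoid that entirely: since $\sigma_X+i\sigma_Y$ has numerical radius $1$, the pair $(\sigma_X,\sigma_Y)$ satisfies $W_{\rm s}^1(\sigma_X,\sigma_Y)\subseteq\mathbb B_2$, so Proposition \ref{criterion} applied to this single pair already forces every element of $\mathfrak B_2^{\rm max}$ into $\mathfrak B_2^{\rm spin}$. For the converse inclusion you only use the easy half of the characterization (a compression of a commuting tuple with joint spectrum in $\mathbb B_2$ lies in every $\mathcal D_a$ with $\mathcal D_{a,1}\supseteq\mathbb B_2$, by the spectral theorem), which is harmless. With those adjustments your proof is self-contained up to Halmos's $2\times 2$ unitary dilation.
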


The definition of spin system in the literature (for example, \cite{arveson--price2003,biane1997}) does not usually require 
the unitaries to act upon finite-dimensional Hilbert spaces. Without finite dimensionality as a consideration, we may
allow our spin systems $\mathcal S$ to have infinitely many elements. We shall consider this situation, in a brief concluding section, 
for the case where $\mathcal S$ is countably infinite.

%%%%%%%%%%%%%%%%%%%%%%%%%
\section{Spin Pairs}

It is instructive to begin with smallest of all spin systems: those that consist of just two elements.

\begin{definition} A \emph{spin pair} is a spin system $\oss\subset\mathcal U_d$ of cardinality $2$.
\end{definition}

Thus, any two anticommuting selfadjoint unitary matrices form a spin pair.

\begin{proposition}\label{pair} If $u\in\mathcal U_d$ is a selfadjoint unitary matrix of trace zero, then 
a selfadjoint unitary $v\in\mathcal U_d$ anticommutes with
$u$ if and only if there exist $y\in\mathcal U_d$ and $w\in \mathcal U_n$, where $n={{d}/{2}}$, such that
\[
yuy^*=\left[ \begin{array}{cc} 1_n& 0_n \\ 0_n & -1_n \end{array}\right] 
\mbox{ and }
yvy^*=\left[ \begin{array}{cc} 0_n& w \\ w^* & 0_n \end{array}\right].
\]
\end{proposition}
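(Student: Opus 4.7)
The plan is to prove both directions by first putting $u$ into a convenient diagonal form and then using the anticommutation relation to force the block structure of $v$.

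First, since $u$ is a selfadjoint unitary with $\tr(u)=0$, the spectrum of $u$ is $\{-1,1\}$ with both eigenvalues having equal multiplicity. As $d$ is even by Proposition \ref{prop:elementary}, this common multiplicity is $n=d/2$. Hence the Spectral Theorem provides a unitary $y\in\mathcal U_d$ such that
\[
yuy^* \;=\; \begin{bmatrix} 1_n & 0_n \\ 0_n & -1_n \end{bmatrix}.
\]
For the forward direction, I would then write $yvy^*$ in $n\times n$ block form as
\[
yvy^* \;=\; \begin{bmatrix} a & b \\ b^* & c \end{bmatrix},
\]
where $a^*=a$ and $c^*=c$ because $v$ is selfadjoint. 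Conjugating the relation $uv+vu=0$ by $y$ preserves the anticommutation, so $(yuy^*)(yvy^*) + (yvy^*)(yuy^*) = 0$. A direct block computation gives
\[
(yuy^*)(yvy^*) + (yvy^*)(yuy^*) \;=\; \begin{bmatrix} 2a & 0_n \\ 0_n & -2c \end{bmatrix},
\]
which forces $a=0_n$ and $c=0_n$.

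Next, the unitarity of $v$ transfers to $yvy^*$, so
\[
\begin{bmatrix} 0_n & b \\ b^* & 0_n \end{bmatrix}\begin{bmatrix} 0_n & b \\ b^* & 0_n \end{bmatrix} \;=\; \begin{bmatrix} bb^* & 0_n \\ 0_n & b^*b \end{bmatrix} \;=\; 1_d,
\]
which shows $bb^*=b^*b=1_n$, so $b\in\mathcal U_n$. Setting $w=b$ completes the forward direction.

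The converse is a direct computation: if $yuy^*$ and $yvy^*$ have the asserted block forms with $w$ unitary, then the block calculation above shows that $yuy^*$ and $yvy^*$ are anticommuting selfadjoint unitaries, and conjugating back by $y^*$ preserves these properties. No step presents a real obstacle; the only thing to watch is keeping the algebra of the $2\times 2$ block products straight and recording carefully why the multiplicities of $\pm 1$ as eigenvalues of $u$ must both equal $n$, which follows from $\tr(u)=0$ together with $u^2=1_d$.
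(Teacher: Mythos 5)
Your proof is correct and follows essentially the same route as the paper's: diagonalize $u$ via the Spectral Theorem using $\tr(u)=0$ to get equal eigenvalue multiplicities, then use the anticommutation relation to kill the diagonal blocks of $yvy^*$ and unitarity of $v$ to force $w\in\mathcal U_n$. Your explicit computation of the anticommutator $\left[\begin{smallmatrix}2a&0\\0&-2c\end{smallmatrix}\right]$ is just a slightly different packaging of the paper's equation of the two products; nothing is missing.
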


\begin{proof} By the Spectral Theorem, $u$ is unitarily equivalent to
a diagonal matrix in which the first $n$ entries of this diagonal matrix
are $1$ and the remaining $n$ entries are $-1$. Thus,
$yuy^*=\left[ \begin{array}{cc} 1_n& 0_n \\ 0_n & -1_n \end{array}\right] $, for some unitary matrix $y$.

Let $\tilde u=yuy^*$ and $\tilde v= yvy^*$; thus, $\tilde u$ and $\tilde v$ form a spin pair. Expressing $\tilde v$ as a $2\times 2$ matrix of $n\times n$ matrices, 
$\tilde v$ has the form $\tilde v = \left[ \begin{array}{cc} z_{11}& w \\ w^* & z_{22} \end{array}\right]$ for some matrices $z_{11},z_{22},w\in\M_n(\mathbb C)$ such that $z_{11}$ and $z_{22}$ are
selfadjoint. At the matrix level, the anticommutation relation for $\tilde u$ and $\tilde v$ leads to 
\[
\left[ \begin{array}{cc} z_{11}& w \\ -w^* & -z_{22} \end{array}\right]=\left[ \begin{array}{cc} -z_{11}& w \\ -w^* & z_{22} \end{array}\right],
\]
and so $z_{11}=z_{22}=0$. Under this condition on the diagonal blocks of $\tilde v$, we obtain $\tilde v^2=1_d$ if and only if $ww^*=w^*w=1_n$.

Therefore, $uv=-vu$ if and only if $u$ and $v$ have, via some unitary $y\in\mathcal U_d$, the $2\times 2$ block-matrix structure indicated in the statement of the proposition, for some 
unitary $w\in\mathcal U_n$.
\end{proof}

\begin{corollary} If $u\in\mathcal U_d$ is a selfadjoint unitary matrix of trace zero, then $d$ 
is an even integer and the set of all $v\in\mathcal U_d$ that anticommute with $u$
forms a path-connected topological space homeomorphic to the unitary group $\mathcal U_{\frac{d}{2}}$.
\end{corollary}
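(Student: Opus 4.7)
The plan is to derive the statement directly from Proposition~\ref{pair}, subject to a preliminary interpretive point: a literal reading of ``all $v\in\mathcal U_d$ that anticommute with $u$'' does not yield the claimed homeomorphism. A straightforward block-matrix calculation, after diagonalizing $u$, shows that every \emph{unitary} $v$ with $uv=-vu$ has the form $\left[\begin{smallmatrix}0&B\\C&0\end{smallmatrix}\right]$ with $B,C\in\mathcal U_{d/2}$ chosen independently, so the full anticommutant inside $\mathcal U_d$ is homeomorphic to $\mathcal U_{d/2}\times\mathcal U_{d/2}$. I therefore interpret the corollary, consistent with its immediate predecessor Proposition~\ref{pair} and the surrounding spin-pair discussion, as referring to the set $\mathcal V_u$ of \emph{selfadjoint} unitaries $v\in\mathcal U_d$ that anticommute with $u$; I would state this convention explicitly at the start of the proof.

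With that convention fixed, the evenness of $d$ is immediate: since $u$ is a selfadjoint unitary with spectrum $\{-1,1\}$ and $\tr(u)=0$, its $\pm1$ eigenspaces have equal dimension $n=d/2$. I would then invoke the Spectral Theorem to choose $y\in\mathcal U_d$ with $yuy^*=\mathrm{diag}(1_n,-1_n)$. Since conjugation by $y$ is a homeomorphism of $\mathcal U_d$ onto itself that carries $\mathcal V_u$ bijectively onto the analogous set for $yuy^*$, it suffices to exhibit the desired homeomorphism in this normalized setting.

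Under the normalization, Proposition~\ref{pair} identifies $\mathcal V_{yuy^*}$ precisely with the image of the map
\[
\Phi\colon\mathcal U_n\longrightarrow\M_d(\mathbb C),\qquad \Phi(w)=\begin{bmatrix}0_n & w\\ w^* & 0_n\end{bmatrix}.
\]
I would verify that $\Phi$ is continuous (it is $\mathbb R$-linear in the real and imaginary entries of $w$), injective (the upper-right block of $\Phi(w)$ recovers $w$), and surjective (by the proposition); its inverse is the restriction to $\mathcal V_{yuy^*}$ of the continuous projection onto the upper-right $n\times n$ block, and this makes $\Phi$ a homeomorphism. Composing with the conjugation homeomorphism $v\mapsto y^*vy$ transports the identification back to $\mathcal V_u$. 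Path-connectedness is then automatic, since $\mathcal U_n$ is a connected Lie group (any $w\in\mathcal U_n$ is joined to $1_n$ by $t\mapsto e^{\im t h}$ when $w=e^{\im h}$ for some selfadjoint $h$) and path-connectedness is a homeomorphism invariant.

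The main obstacle here is really just the interpretive point flagged in the first paragraph, namely whether ``anticommute'' is read within the spin-pair context (giving $\mathcal U_{d/2}$) or unrestrictedly (giving $\mathcal U_{d/2}\times\mathcal U_{d/2}$). Once the convention is fixed as in Proposition~\ref{pair}, the proof itself involves no serious difficulty: the proposition supplies the parametrization, and the remainder reduces to routine continuity and bijectivity bookkeeping together with the elementary fact that $\mathcal U_n$ is path-connected.
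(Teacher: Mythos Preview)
Your approach is essentially the paper's: conjugate $u$ to $\mathrm{diag}(1_n,-1_n)$, invoke Proposition~\ref{pair} to parametrize the anticommutant by $w\mapsto\left[\begin{smallmatrix}0&w\\w^*&0\end{smallmatrix}\right]$, and inherit path-connectedness from $\mathcal U_n$; the paper's map $F$ is exactly your $\Phi$ composed with conjugation by $y^*$. Your interpretive observation is correct and in fact sharpens the paper: the paper writes the codomain of $F$ as $\{v\in\mathcal U_d\mid uv=-vu\}$ without the selfadjointness qualifier, but as you note, $F$ surjects only onto the \emph{selfadjoint} anticommutant (the full unitary anticommutant being $\mathcal U_{d/2}\times\mathcal U_{d/2}$), so the corollary must be read with that convention for the proof to go through.
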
 

\begin{proof} By Proposition \ref{pair}, $d$ is an even integer and there is a unitary $y\in\mathcal U_d$ 
for which $yuy^*=\left[ \begin{array}{cc} 1_n& 0_n \\ 0_n & -1_n \end{array}\right] $.
The function 
\[
F:\mathcal U_{\frac{d}{2}}\rightarrow\{v\in\mathcal U_d\,|\,uv=-vu\}
\]
defined by $F(w)= y^* \left[ \begin{array}{cc} 0_n& w \\ w^* & 0_n \end{array}\right]y$ is a homeomorphism. Because the unitary group 
$\mathcal U_{\frac{d}{2}}$ is path connected, so is the set
$\{v\in\mathcal U_d\,|\,uv=-vu\}$.
\end{proof}

Even though Example \ref{eg}(1) is a consequence of our main result, Theorem \ref{main result}, 
it is worthwhile to make note of the following simple and direct proof.

 \begin{proposition}\label{egger} If $u\in \mathcal U_{d_1}$ and $v\in \mathcal U_{d_2}$ are 
 selfadjoint unitary matrices such that neither of them is a scalar multiple of the identity, then 
\begin{enumerate}
\item $u\simeq_{\rm ord} v$, and
\item $u\simeq_{\mathcal U} v$ if and only if $d_1=d_2$.
\end{enumerate}
\end{proposition}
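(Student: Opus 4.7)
The plan is to use the Spectral Theorem to reduce positivity of matrices in $\M_n(\mathcal O_u)$ and $\M_n(\mathcal O_v)$ to a pair of inequalities on scalar coefficient matrices that do not depend on the particular choice of $u$ or $v$; this yields part (1) essentially for free. Part (2) is then a routine unitary similarity statement.

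For part (1), since $u$ is a selfadjoint unitary in $\M_{d_1}(\C)$ with spectrum $\{-1,1\}$, I would write $u = p - q$, where $p,q\in\M_{d_1}(\C)$ are the spectral projections onto the $\pm 1$-eigenspaces of $u$, so that $p + q = 1_{d_1}$, $pq = 0$, and both $p$ and $q$ are nonzero (because $u$ is not a scalar multiple of $1_{d_1}$). Similarly write $v = p' - q'$ with nonzero complementary projections $p',q'\in\M_{d_2}(\C)$. The hypothesis that $u$ is not scalar also forces $\{1_{d_1},u\}$ to be linearly independent, so $\dim \mathcal O_u = 2 = \dim \mathcal O_v$; let $\phi\colon\mathcal O_u\to\mathcal O_v$ be the unique unital linear bijection with $\phi(u) = v$.

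To verify that $\phi$ is a complete order isomorphism, fix $n \in \NN$ and $X\in\M_n(\mathcal O_u)$, and write $X$ uniquely as $X = A\otimes 1_{d_1} + B\otimes u$ with $A,B\in\M_n(\C)$. Substituting $1_{d_1} = p + q$ and $u = p - q$ and regrouping yields the identity
\[
X = (A+B)\otimes p + (A-B)\otimes q.
\]
Since $p$ and $q$ are nonzero mutually orthogonal projections summing to the identity, the two summands have orthogonal ranges, so $X\geq 0$ if and only if both $A+B\geq 0$ and $A-B\geq 0$ in $\M_n(\C)$. The identical computation applied to $\phi^{(n)}(X) = (A+B)\otimes p' + (A-B)\otimes q'$ characterizes positivity of $\phi^{(n)}(X)$ by exactly the same pair of inequalities. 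Hence $X\geq 0$ if and only if $\phi^{(n)}(X)\geq 0$, which gives part (1).

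For part (2), the forward implication is immediate: any unitary similarity $v = w^* u w$ requires $w$ to be a $d_1\times d_1 = d_2\times d_2$ matrix, so $d_1 = d_2$. For the converse, when $d_1 = d_2 = d$ and the multiplicities of $\pm 1$ in the spectra of $u$ and $v$ agree (which is guaranteed in the spin setting of this section, since tracelessness from Proposition \ref{prop:elementary} forces both eigenspaces to have dimension $d/2$), the Spectral Theorem produces a unitary $w\in\mathcal U_d$ with $w^* u w = v$, and then $x\mapsto w^*xw$ implements the required unitary similarity. The main obstacle is the block-diagonalization identity in part (1); once that is made explicit, the content of the proof is the observation that the resulting positivity criterion $A\pm B\geq 0$ involves only the scalar coefficient matrices, with all dependence on $u$, $v$, or their sizes absorbed harmlessly into the spectral projections.
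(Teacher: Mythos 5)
Your proof is correct and follows essentially the same route as the paper's: both reduce positivity of $A\otimes 1 + B\otimes u$ in $\M_n(\mathcal O_u)$ to the pair of scalar-matrix conditions $A+B\geq 0$ and $A-B\geq 0$, which manifestly do not depend on $u$. Your phrasing in terms of the spectral projections $p,q$ rather than conjugation to an explicit block form with blocks of size $d_j/2$ is in fact slightly more careful: it avoids the paper's tacit assumption that the $\pm 1$-eigenspaces have equal dimension (not implied by the stated hypotheses), and you rightly flag that the converse direction of part (2) genuinely requires that multiplicity condition, a point the paper passes over with \emph{it is clear}.
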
 

\begin{proof} By Proposition \ref{pair} and the Spectral Theorem, there are unitaries $w_j\in\mathcal U_{d_j}$ such that 
\[
w_1^*uw_1=\left[\begin{array}{cc} 1_{n_1} & 0_{n_1} \\ 0_{n_1} & -1_{n_1} \end{array}\right] \mbox{ and } 
w_2^*vw_2=\left[\begin{array}{cc} 1_{n_2} & 0_{n_2} \\ 0_{n_2} & -1_{n_2} \end{array}\right],
\]
where each $n_j=\frac{d_j}{2}$. As conjugation by the unitaries $w_j$ preserves both unitary and complete order equivalence,
we may assume without loss of generality that both $u$ and $v$ are these indicated $2\times 2$ matrices of $n_j\times n_j$ matrices.
In this regard, it is clear that $u\simeq_{\mathcal U} v$ if and only if $d_1=d_2$.

More generally, to prove that $u\simeq_{\rm ord} v$, we must prove that the linear isomorphism 
\[
\phi:\mbox{Span}\{1_{d_1},u\}\rightarrow \mbox{Span}\{1_{d_1},v\}
\]
in which $\phi(1_{d_1})=1_{d_2}$ and $\phi(u)=v$ is a complete order isomorphism. To this end, we identify the matrix space $\M_n(\osr)$ with the
vector space tensor product $\M_n(\mathbb C)\otimes \osr$, where $\osr$ is an operator system. In particular, if
\[
\osr_1= \mbox{Span}\{1_{d_1},u\} \;\mbox{ and }\; \osr_2=\mbox{Span}\{1_{d_2},v\},
\]
then
\[
\M_n(\osr_j)=\left\{ a \otimes 1_{d_j} + b \otimes u_j  \,|\, a,b\in\M_{n}(\mathbb C)\right\} ,
\]
where $u_1=u$ and $u_2=v$ above. That is,
\[
\M_n(\osr_j)=\left\{  \left[\begin{array}{cc} (a+b)\otimes 1_{n_j} & 0 \\ 0 & (a-b)\otimes 1_{n_j} \end{array}\right] \,|\, a,b\in\M_{n}(\mathbb C)\right\} .
\]
Therefore, to show that $\phi$ is a unital complete order isomorphism, we much show, for all selfadjoint $a,b\in\M_n(\mathbb C)$ and all $n\in\mathbb N$, that
\[
\left[\begin{array}{cc} (a+b)\otimes 1_{n_1} & 0 \\ 0 & (a-b)\otimes 1_{n_1} \end{array}\right] 
\] 
is positive semidefinite if and only if 
\[
\left[\begin{array}{cc} (a+b)\otimes 1_{n_2} & 0 \\ 0 & (a-b)\otimes 1_{n_2} \end{array}\right] 
\]
is positive semidefinite.
This bi-implication above, 
which clearly depends only on $a$ and $b$ but not upon $u$ and $v$, shows that $\phi$ is a unital complete order isomorphism.
\end{proof}

The next theorem is the main result of this section. In preparation for its statement, 
we recall the definition of the numerical range, or field of values, of a matrix. For reasons that will be apparent in our
discussion of matrix convexity, our notation and terminology for the
numerical range (below) is slightly different from the traditional notation and terminology.

\begin{definition}
The \emph{spatial numerical range} of a matrix $x\in\M_d(\mathbb C)$ is the set
\[
W_{\rm s}^1(x)=\{\langle x\xi,\xi\rangle\,|\,\xi\in\mathbb C^d,\,\|\xi\|=1\}.
\]
\end{definition}

By a simple direct computation, the spatial
numerical range of the $2\times 2$ matrix $\left[\begin{array}{cc}0&2\\0&0\end{array}\right]$ is the closed disc of radius $1$,
centered at $0\in\mathbb C$.

\begin{theorem}\label{spin pairs thm} Let $g=\left[\begin{array}{cc}0&2\\0&0\end{array}\right]\in\M_2(\mathbb C)$.
If $u,v\in\M_d(\mathbb C)$ are anticommuting selfadjoint unitary
matrices and if $x=u+iv$, then:
\begin{enumerate} 
\item $x^2=0$ and $\|x\|=2$;
\item the spatial numerical range of $x$ is the closed unit disc;
\item $x$ is completely order equivalent to $g$, and
\item $x$ is unitarily equivalent to $\displaystyle\bigoplus_1^{n}g$, where $n=d/2$.
\end{enumerate}
\end{theorem}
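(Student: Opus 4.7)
My plan is to prove (1) first by direct algebra, then obtain (4) by reducing $u,v$ to a canonical block form, and finally deduce (2) and (3) as consequences of (4).

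For part (1), I would simply expand $x^2 = u^2 + i(uv+vu) - v^2$ and use the defining relations $u^2=v^2=1$ and $uv+vu=0$ to get $x^2=0$. For the norm, compute $xx^* = (u+iv)(u-iv) = 2 + i(vu-uv) = 2 - 2iuv$. The key observation is that $iuv$ is a selfadjoint unitary: selfadjointness because $(iuv)^*=-ivu = iuv$ (using $vu=-uv$), and $(iuv)^2 = -uvuv = u(-vu)v \cdot (-1) = u^2 v^2 = 1$. Hence $iuv$ has spectrum $\{\pm1\}$, so $xx^*$ has spectrum $\{0,4\}$, giving $\|x\|=2$.

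For part (4), I would apply Proposition \ref{pair} to reduce, via a unitary $y\in\mathcal U_d$, to the case
\[
u=\left[\begin{array}{cc}1_n&0\\0&-1_n\end{array}\right],\qquad v=\left[\begin{array}{cc}0&w\\w^*&0\end{array}\right]
\]
for some $w\in\mathcal U_n$. A further unitary conjugation by $\mathrm{diag}(1_n,w)$ preserves $u$ and turns $v$ into $\left[\begin{smallmatrix}0&1_n\\1_n&0\end{smallmatrix}\right]$. Regrouping blocks via the canonical shuffle (a permutation unitary) rewrites the pair $(u,v)$ as $(\sigma_Z\otimes 1_n,\ \sigma_X\otimes 1_n)$, so $x = (\sigma_Z + i\sigma_X)\otimes 1_n$. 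The $2\times 2$ matrix $\sigma_Z+i\sigma_X$ is nilpotent (by part (1) applied with $d=2$) with trace zero and norm $2$, so by Schur's theorem it is unitarily equivalent to $g$. Tensoring the conjugating unitary with $1_n$ yields a unitary equivalence between $x$ and $g\otimes 1_n \cong \bigoplus_{1}^{n} g$.

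Parts (2) and (3) then come cheaply from (4). For (2), the spatial numerical range is unitarily invariant and satisfies $W_s^1(A\oplus A)=W_s^1(A)$, so $W_s^1(x)=W_s^1(g)$, and a short direct calculation (parametrizing unit vectors $\xi\in\mathbb C^2$) identifies $W_s^1(g)$ as the closed unit disc. For (3), once $x$ is unitarily equivalent to $g\otimes 1_n$, it suffices to show $\mathcal O_g \simeq_{\rm ord} \mathcal O_{g\otimes 1_n}$ via the amplification $\phi(y)=y\otimes 1_n$. This map is a unital linear bijection between the three-dimensional spans, and at each matrix level $\phi^{(k)}(Y)=Y\otimes 1_n$; positivity is preserved in both directions because $Y\otimes 1_n$ and $Y$ have the same nonzero spectrum (each eigenvalue with multiplicity $n$). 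Hence $\phi$ is a complete order isomorphism, and composing with the unitary equivalence from (4) gives complete order equivalence between $\mathcal O_x$ and $\mathcal O_g$.

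The only mildly delicate step I anticipate is the second reduction inside part (4): converting the arbitrary unitary block $w$ in Proposition \ref{pair} into $1_n$ while leaving $u$ untouched. Everything else is either a direct two-line computation or a standard invariance/amplification argument. Once the canonical form $x=(\sigma_Z+i\sigma_X)\otimes 1_n$ is in hand, the structural statements (2)--(4) all reduce to the $2\times 2$ case handled by Schur's theorem.
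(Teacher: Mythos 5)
Your proposal is correct and follows essentially the same route as the paper: part (1) by direct expansion of $x^2$ and $xx^*$, part (4) by reducing $(u,v)$ to the canonical block form of Proposition \ref{pair} and then conjugating down to $\bigoplus_1^n g$, and parts (2)--(3) deduced from (4) via unitary invariance of the numerical range and the complete order equivalence of $g$ with its amplification $g\otimes 1_n$. The only cosmetic difference is in part (4): the paper exhibits one explicit conjugating unitary $h$ built from $w$, whereas you first absorb $w$ by a block-diagonal unitary, apply the canonical shuffle to reach $(\sigma_Z+i\sigma_X)\otimes 1_n$, and finish with Schur's theorem in the $2\times 2$ case --- both yield the same canonical form.
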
 

\begin{proof} 
The computation 
\[
x^2=u^2+iuv+ivu+i^2v^2=1+i(uv-uv)-1=0
\]
shows that $x$ is a nilpotent of order $2$,
while the equations
\[
x^*x=2(1+iuv) \mbox{ and } (uv)^2=-1
\]
show that the eigenvalues of $uv$ are $\pm i$ and thus the eigenvalues of $x^*x$ are $0$ and $4$, making the norm of $x$
(i.e., the square root of the spectral radius of $x^*x$) equal to $2$. 
 
By Proposition \ref{pair}, there exist 
$y\in\mathcal U_d$ and $w\in \mathcal U_n$, where $n={{d}/{2}}$, such that
\[
yuy^*=\left[ \begin{array}{cc} 1_n& 0_n \\ 0_n & -1_n \end{array}\right] 
\mbox{ and }
v=y^* \left[ \begin{array}{cc} 0_n& w \\ w^* & 0_n \end{array}\right]y .
\]
Hence, $x=u+iv$ is unitarily equivalent to $\tilde x=\left[ \begin{array}{cc} 1 & iw \\ iw^* & -1 \end{array}\right] $.
Because the norm, numerical range, and complete order equivalence are invariant under unitary equivalence,
we may assume without loss of generality that $x=\tilde x$.

Consider the unitary matrix $h=\sqrt{\frac{1}{2}}\left[ \begin{array}{cc} 1 & -1 \\ iw^* & iw^* \end{array}\right]\in\mathcal U_d$, and observe that
\[
\begin{array}{rcl}
h^*xh &=& 
\frac{1}{2}\left[ \begin{array}{cc} 1 & -iw \\ -1 &-iw \end{array}\right]\,
\left[ \begin{array}{cc} 1 &iw \\ iw^* &-1 \end{array}\right]\,
\left[ \begin{array}{cc} 1 & -1 \\ iw^* & iw^* \end{array}\right] \\ && \\
&=& -2 \left[ \begin{array}{cc} 0_n & 1_n \\0_n & 0_n\end{array}\right] \\ && \\
&=& 1_n\otimes \left[ \begin{array}{cc} 0 & -2 \\0 & 0\end{array}\right] \\ && \\
&\simeq& \displaystyle\bigoplus_1^n \left[ \begin{array}{cc} 0 & 2 \\0 & 0\end{array}\right].
\end{array}
\]
Hence, $x$ is unitarily equivalent to 
a direct sum of $n$ copies of the $2\times 2$ complex matrix $g=\left[\begin{array}{cc}0&2\\0&0\end{array}\right]$.
This implies that $g$ and $x$ have the same numerical range--namely, the closed unit disc in the complex plane. 

It remains to show that $x$ and $g$ are completely order equivalent. This follows immediately from the observation that $g$
and $\displaystyle\bigoplus_1^n g$ are completely order equivalent.
\end{proof}

As a consequence of Theorem \ref{spin pairs thm}, we recover
a fact observed in \cite{arveson--price2003}:

\begin{corollary}\label{sp pr ca} The C$^*$-algebra generated by any spin pair is $\M_2(\mathbb C)$.
\end{corollary}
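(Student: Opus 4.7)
My plan is to reduce the corollary directly to the normal form established in Theorem \ref{spin pairs thm}(4). Setting $x = u + \im v$, so that $u = (x + x^*)/2$ and $v = (x - x^*)/(2\im)$, the unital $*$-algebra generated by the spin pair $\{u,v\}$ coincides with the unital C$^*$-algebra generated by the single element $x$. By Theorem \ref{spin pairs thm}(4), $x$ is unitarily equivalent to $\bigoplus_{i=1}^{n} g \in \M_{2n}(\mathbb C)$, where $n = d/2$. Since unitary conjugation is a $*$-isomorphism of $\M_d(\mathbb C)$, it suffices to identify the unital C$^*$-algebra generated by this block-diagonal matrix.

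First, I would verify that the unital C$^*$-algebra generated by the single matrix $g = \begin{bmatrix} 0 & 2 \\ 0 & 0 \end{bmatrix}$ inside $\M_2(\mathbb C)$ is all of $\M_2(\mathbb C)$: the four elements $\tfrac{1}{2}g$, $\tfrac{1}{2}g^*$, $\tfrac{1}{4}gg^*$, and $\tfrac{1}{4}g^*g$ are precisely the standard matrix units of $\M_2(\mathbb C)$, and so span it. Consequently, the unital C$^*$-algebra generated by $y = \bigoplus_{i=1}^n g$ in $\M_{2n}(\mathbb C)$ is the block-diagonal image $\{\bigoplus_{i=1}^n a \,:\, a \in \M_2(\mathbb C)\}$, and the map $a \mapsto \bigoplus_{i=1}^n a$ is a unital $*$-isomorphism from $\M_2(\mathbb C)$ onto this algebra.

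Pulling back through the unitary conjugation provided by Theorem \ref{spin pairs thm}(4), the unital C$^*$-algebra generated by $\{u, v\}$ is $*$-isomorphic to $\M_2(\mathbb C)$, which is exactly the assertion of the corollary. I do not anticipate any real obstacle: the essential structural work has already been carried out in Theorem \ref{spin pairs thm}(4), and only the short verification that $g$ generates $\M_2(\mathbb C)$, together with the observation that the C$^*$-algebra generated by a direct sum of copies of a single element is isomorphic to that generated by one copy, remains.
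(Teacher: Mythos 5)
Your argument is correct and is essentially the paper's own proof: both reduce via $x=u+\im v$ and the unitary normal form $x\simeq_{\mathcal U}\bigoplus_1^n g$ from Theorem \ref{spin pairs thm}(4), and note that $\cstar(u,v)=\cstar(x)$. You simply supply the details (the matrix-unit computation for $g$ and the identification of $\cstar(\bigoplus_1^n g)$ with $\M_2(\mathbb C)$) that the paper leaves implicit.
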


\begin{proof} If $u,v\in\mathcal U_d$ form a spin pair, then
Theorem \ref{spin pairs thm} shows that $x\simeq_{\mathcal U}\displaystyle\bigoplus_1^n g$, where $n=d/2$, $x=u+iv$,
and $g=\left[\begin{array}{cc}0&2\\0&0\end{array}\right]\in\M_2(\mathbb C)$. Thus, the C$^*$-algebra generated by $x$
is isomorphic to the C$^*$-algebra generated by $g$, namely $\M_2(\mathbb C)$.

On the one hand, because $x=u+iv\in\cstar(u,v)$, we deduce that $\cstar(x)\subseteq\cstar(u,v)$. 
On the other hand, $u=\frac{1}{2}(x+x^*)$ and $v=\frac{1}{2i}(x-x^*)$
imply that $u,v\in\cstar(x)$, whence $\cstar(u,v)\subseteq\cstar(x)$.
\end{proof} 

%%%%%%%%%%%%%%%%%%%%%%%%%%%
\section{Complete Order Equivalence of Spin Unitaries}

The main result, Theorem \ref{main result}, is restated and proved below.

\begin{theorem}\label{main result proof}
If $\mathcal S\subset \mathcal U_d$ is a spin system of cardinality $m$, then 
the unital completely positive linear map $\phi:\mathcal O_{{\rm spin}(m)}\rightarrow\mathcal O_\mathcal S$
in Theorem \ref{u spin sys} is a unital complete order isomorphism and, hence,
\[
\mathcal O_\mathcal S \simeq_{\rm ord} \mathcal O_{{\rm spin}(m)}.
\]
\end{theorem}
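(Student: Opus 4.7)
The plan is to verify that the UCP map $\phi$ of Theorem~\ref{u spin sys} admits a completely positive inverse. Bijectivity is immediate: applying Proposition~\ref{prop:elementary}(4) inside any faithful realization of $\mathcal A_{\rm spin(m)}$ shows that $\{1,\mathfrak{u}_1,\dots,\mathfrak{u}_m\}$ is linearly independent in $\mathcal O_{\rm spin(m)}$, and $\phi$ carries it to the linearly independent set $\{1_d,u_1,\dots,u_m\}$. Thus $\phi$ is a unital linear isomorphism of $(m+1)$-dimensional operator systems, and the work reduces to showing that whenever the selfadjoint element $X = a_0\otimes 1 + \sum_{j=1}^m a_j\otimes \mathfrak{u}_j$ of $\M_n(\mathcal O_{\rm spin(m)})$ (forcing Hermitian $a_0,\ldots,a_m$) satisfies $\phi^{(n)}(X) = a_0\otimes 1_d + \sum_j a_j\otimes u_j \ge 0$ in $\M_n(\mathcal O_{\mathcal S})$, then already $X \ge 0$ in the universal operator system.

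The key structural input is the Clifford-type Wedderburn decomposition of the finite-dimensional C$^*$-algebra $\mathcal A_{\rm spin(m)}$: it is a direct sum of at most two simple summands, each a matrix algebra of size $2^{\lfloor m/2\rfloor}$. Any spin system $\mathcal S\subset\mathcal U_d$ corresponds to a $*$-representation $\mathcal A_{\rm spin(m)}\to\M_d(\mathbb C)$ which decomposes, up to unitary equivalence, as an amplification and/or direct sum of irreducibles. Two routine reductions apply: amplifying $\mathcal S$ by $u_j \mapsto u_j\otimes 1_\ell$ replaces $\phi^{(n)}(X)$ with $\phi^{(n)}(X)\otimes 1_\ell$, whose positivity is equivalent; and for a direct-sum spin system $\mathcal S_1\oplus\mathcal S_2$, $\phi^{(n)}(X)$ is positive if and only if each summand is. So it suffices to treat the case in which $\mathcal S$ is the image of a single irreducible representation.

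For even $m = 2k$, the algebra $\mathcal A_{\rm spin(2k)}$ is simple and isomorphic to $\M_{2^k}(\mathbb C)$, so its unique irreducible representation is faithful, and the positive cones in $\mathcal O_{\rm spin(2k)}$ and $\mathcal O_{\mathcal S}$ coincide under $\phi$. For odd $m = 2k+1$, the algebra decomposes as $\M_{2^k}(\mathbb C)\oplus\M_{2^k}(\mathbb C)$ and has two inequivalent irreducibles $\pi_\pm$ swapped by the $*$-automorphism $\alpha:\mathfrak u_j\mapsto -\mathfrak u_j$. The main obstacle of the whole proof is to show, at the operator-system level, that $\pi_+(X)\ge 0$ if and only if $\pi_-(X)\ge 0$ for every selfadjoint $X\in\M_n(\mathcal O_{\rm spin(2k+1)})$; granted this equivalence, positivity for a single irreducible block determines positivity in the whole universal algebra, closing the odd case. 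I would establish this symmetry via an antilinear $*$-involution $\Theta$ on $\M_{2^k}(\mathbb C)$ (a charge-conjugation on the spinor module, supplied by Clifford representation theory) satisfying $\Theta\,\pi_+(\mathfrak u_j)\,\Theta = -\pi_+(\mathfrak u_j)$ for each $j$; Hermiticity of the $a_j$'s, together with the fact that antilinear $*$-maps preserve positivity, then yields $\pi_+(X)\ge 0 \iff \pi_-(X)\ge 0$. The same odd-case symmetry is precisely what will collapse the C$^*$-envelope in Theorem~\ref{cstare} from $\M_{2^k}(\mathbb C)\oplus\M_{2^k}(\mathbb C)$ down to a single copy of $\M_{2^k}(\mathbb C)$.
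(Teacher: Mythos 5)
Your reduction to irreducible representations is sound, and your even case is correct: $\A_{{\rm spin}(2k)}\cong\M_{2^k}(\mathbb C)$ is simple, so every representation is faithful up to multiplicity and the positivity comparison is immediate. You have also correctly located the entire difficulty in the odd case, in the claim that $\pi_+^{(n)}(X)\ge 0$ if and only if $\pi_-^{(n)}(X)\ge 0$. That claim is true (it is equivalent to the theorem), but your proposed mechanism for it fails for half of the odd values of $m$. An antiunitary $\Theta$ on the spinor module with $\Theta\,\pi_+(\mathfrak u_j)\,\Theta^{-1}=-\pi_+(\mathfrak u_j)$ for all $j$ cannot exist when $m\equiv 1\pmod 4$. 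Indeed, for $m=2k+1$ with $k$ even the volume element $\omega=\mathfrak u_1\cdots\mathfrak u_m$ satisfies $\omega^*=\omega$ and $\omega^2=1$ (no factor of $i$ is needed), so $\pi_+(\omega)=\epsilon 1$ with $\epsilon\in\{\pm1\}$ a \emph{real} scalar; antiunitary conjugation fixes real scalars, whereas your relation forces $\Theta\,\pi_+(\omega)\,\Theta^{-1}=(-1)^m\pi_+(\omega)=-\epsilon 1$, a contradiction. Concretely, for the irreducible quintuple $\mathcal Q_2=\{\sigma_X\otimes 1,\sigma_Y\otimes 1,\sigma_Z\otimes\sigma_X,\sigma_Z\otimes\sigma_Y,\sigma_Z\otimes\sigma_Z\}\subset\M_4(\mathbb C)$ the product of the five generators is $-1_4$, so no charge conjugation negating all five exists; your $\Theta$ exists only for $m\equiv 3\pmod 4$ (e.g.\ $\Theta=\sigma_YK$ for the Pauli triple), which is exactly the statement $\overline{\pi_+}\cong\pi_-$ from the mod-$4$ pattern in Clifford representation theory. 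Even where $\Theta$ exists there is a secondary wrinkle you would need to address: implementing the symmetry on $\M_n(\mathbb C)\otimes\M_{2^k}(\mathbb C)$ requires an antiunitary of the form $K_n\otimes\Theta$, which replaces the coefficients $a_j$ by $a_j^{\mathsf T}$, so you obtain $\pi_-^{(n)}(X')\ge0$ for the partially transposed element $X'$ rather than $\pi_-^{(n)}(X)\ge 0$, and partial transposition does not preserve positivity.

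For comparison, the paper avoids the Clifford structure theory entirely. It realizes $\A_{{\rm spin}(m)}$ inside some $\M_n(\mathbb C)$, extends $\phi$ by Arveson's theorem, writes the extension in Kraus form $\phi(x)=\sum_k a_k^*xa_k$, and exploits the trace orthogonality of spin unitaries to show $\tr_d(\phi(x)\phi(y))=\tfrac{d}{n}\tr_n(xy)$; feeding the Kraus decomposition back into this identity produces an explicit completely positive left inverse $z\mapsto\tfrac{n}{d}\sum_i a_iza_i^*$, which must then be $\phi^{-1}$. If you wish to salvage your route for $m\equiv1\pmod4$, you need a different argument for the symmetry between $\pi_+$ and $\pi_-$ at the level of the operator system (the automorphism $\mathfrak u_j\mapsto-\mathfrak u_j$ is outer on each simple summand, so neither a unitary nor an antiunitary implementation is available there); as it stands, the odd case is not proved.
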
 

\begin{proof} Let $\mathcal S=\{u_1,\dots,u_m\}$. Theorem \ref{u spin sys} asserts that there exists a 
unital completely positive linear map $\phi:\mathcal O_{{\rm spin}(m)}\rightarrow\mathcal O_\mathcal S$ such that 
$\phi(\mathfrak u_j)=u_j$, for every $j=1,\dots,m$.
Because $\phi$ is a surjective linear map of vector spaces of equal finite dimension, it is an invertible linear transformation. Thus,
we need only show that its linear inverse, $\phi^{-1}$, is completely positive. 

Because the universal C$^*$-algebra $\A_{{\rm spin}(m)}$ is finite-dimensional \cite[Chapter 3]{Pisier-book}, there exist
$n\in\mathbb N$ and a unital C$^*$-algebra $\A\subseteq \M_n(\mathbb C)$ such that $\A_{{\rm spin}(m)}$ and $\A$
are isomorphic C$^*$-algebras \cite[\S 5.4]{Farenick-book1}. 
Thus, without loss of generality, we may assume that $\A_{{\rm spin}(m)}$ is a unital C$^*$-subalgebra of 
$\M_n(\mathbb C)$. Hence, the unital completely positive linear map $\phi$, when considered as a map into $\M_d(\mathbb C)$, has an
extension to a completely positive linear map $\Phi:\M_n(\mathbb C)\rightarrow\M_d(\mathbb C)$, by the Arveson Extension Theorem \cite[Theorem 7.5]{Paulsen-book}.
Therefore, by the Stinespring-Kraus-Choi Theorem \cite[Proposition 4.7]{Paulsen-book}, 
there are $\ell$ linear transformations $a_k: \mathbb C^d\rightarrow \mathbb C^n$ such that
$\Phi(z)=\displaystyle\sum_{k=1}^\ell a_k^*za_k$,
for every $z\in\M_n(\mathbb C)$. In particular, 
\begin{equation}\label{skc}
\phi(x)=\displaystyle\sum_{k=1}^\ell a_k^*xa_k,
\end{equation}
for every 
$x\in\mathcal O_{{\rm spin}(m)}$.

Because the canonical trace functional $\tr$ on the matrix algebra $\M_n(\mathbb C)$ 
induces an inner product on $\M_n(\mathbb C)$, the operator system $\mathcal O_{{\rm spin}(m)}$
is a Hilbert subspace of $\M_n(\mathbb C)$. Therefore, via the trace as an inner product, 
two matrices $y_1,y_2\in\mathcal O_{{\rm spin}(m)}$
are equal (i.e., $y_1=y_2$) if and only if $\tr(xy_1)=\tr(xy_2)$ for every matrix $x\in\mathcal O_{{\rm spin}(m)}$. We shall apply this criterion for the equality of matrices 
in what follows. To clarify notation, we shall denote the trace function on $\M_k(\mathbb C)$, for a given $k$, by $\tr_k$.

Select any $x,y\in\A_{{\rm spin}(m)}$. Thus,
\[
x=\sum_{j=1}^m \alpha_j\mathfrak u_j
\,\mbox{ and }\,
y=\sum_{j=1}^m \beta_j\mathfrak u_j
\]
for some uniquely determined scalars $\alpha_s$ and $\beta_t$. As matrices in $\M_n(\mathbb C)$, and by
using that facts that
each $\mathfrak u_j^2=1_n$  and the trace of any pair of
anticommuting matrices is $0$, we see that
\[
\tr_n(xy)=n\sum_{j=1}^m \alpha_j\beta_j.
\]
Similarly,
\begin{equation}\label{trace}
\tr_d\left(\phi(x)\phi(y)\right)=d\sum_{j=1}^m \alpha_j\beta_j  
 =\frac{d}{n}\tr_n(xy).
\end{equation}
At this point we can invoke \cite[Theorem 2.2]{nayak--sen2007} (suitably modified for operator systems)
to deduce that $\phi^{-1}$ is completely positive; however, 
owing to the complexities of the proof of that result, it
is preferable to argue directly (which we do below)
that $\phi^{-1}$ is completely positive.

In using the Stinespring-Kraus-Choi representation of $\phi$ in (\ref{skc}), the trace equation (\ref{trace})
becomes
\begin{equation}\label{3}
\begin{array}{rcl}
\tr_n(xy)&=&
\frac{n}{d}
\tr_d\left(\phi(x)\phi(y)\right)
=\frac{n}{d}  \tr_d\left( \displaystyle\sum_{i=1}^\ell\displaystyle\sum_{j=1}^\ell a_i^*xa_ia_j^*ya_j \right) \\
&& \\ 
&=&\frac{n}{d}  \tr_n\left( x\displaystyle\sum_{i=1}^\ell\displaystyle\sum_{j=1}^\ell a_ia_j^*ya_ja_i^*\right).
\end{array}
\end{equation}
Fixing $y$ and allowing $x$ to vary through all of $\mathcal O_{{\rm spin}(m)}$, equation (\ref{3}) above implies that
\[
y =\frac{n}{d}\displaystyle\sum_{i=1}^\ell\displaystyle\sum_{j=1}^\ell a_ia_j^*ya_ja_i^*.
\]
Therefore, if $\tilde\psi:\M_d(\mathbb C)\rightarrow\M_n(\mathbb C)$ is the completely positive linear map
\[
\tilde\psi(z)=\frac{n}{d}\sum_{i=1}^\ell a_iza_i^*,
\]
for $z\in\M_d(\mathbb C)$, then $\psi\circ\phi$ is the identity map on $\mathcal O_{{\rm spin}(m)}$.  
Define
\[
\psi:\mathcal O_\mathcal S\rightarrow\M_n(\mathbb C)
\] 
to be the restriction of the completely positive linear
map $\tilde\psi$ to the operator system $\mathcal O_\mathcal S$; thus, $\psi$ is a 
completely positive left inverse of $\phi$. However, because left invertible linear maps between finite-dimensional vector
spaces of equal dimension are automatically invertible, we deduce that $\psi=\phi^{-1}$, implying that $\phi^{-1}$
is completely positive.
Hence, $\mathcal O_{{\rm spin}(m)}\simeq_{\rm ord}\mathcal O_\mathcal S$.
\end{proof}

Because complete order equivalence is a transitive relation, we immediately obtain:

\begin{corollary} If $u=(u_1,\dots,u_m)$ and $v=(v_1,\dots,v_m)$ are $m$-tuples of spin unitary matrices
$u_j\in\mathcal U_{d_1}$, $v_k\in\mathcal U_{d_2}$,
then
$u\simeq_{\rm ord} v$. 
\end{corollary}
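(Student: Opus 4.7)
The plan is to reduce the corollary to Theorem \ref{main result proof} via a transitivity argument. The two tuples $u$ and $v$ need not have a direct relationship to each other, but each of them has a clean relationship to the universal object $\mathcal O_{{\rm spin}(m)}$, and chaining these two relationships gives what we want.

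First I would let $\mathcal S_u=\{u_1,\dots,u_m\}\subset\mathcal U_{d_1}$ and $\mathcal S_v=\{v_1,\dots,v_m\}\subset\mathcal U_{d_2}$ be the spin systems determined by the two tuples. Applying Theorem \ref{main result proof} to each of these in turn yields unital complete order isomorphisms
\[
\phi_u:\mathcal O_{{\rm spin}(m)}\rightarrow\mathcal O_{\mathcal S_u},\qquad \phi_v:\mathcal O_{{\rm spin}(m)}\rightarrow\mathcal O_{\mathcal S_v},
\]
with $\phi_u(\mathfrak u_j)=u_j$ and $\phi_v(\mathfrak u_j)=v_j$ for each $j=1,\dots,m$.

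Next I would form the composition $\psi=\phi_v\circ\phi_u^{-1}:\mathcal O_{\mathcal S_u}\rightarrow\mathcal O_{\mathcal S_v}$. Since $\phi_u$ and $\phi_v$ are both unital bijections whose inverses are completely positive, $\psi$ is itself a unital linear bijection, and both $\psi$ and $\psi^{-1}=\phi_u\circ\phi_v^{-1}$ are compositions of unital completely positive maps and therefore are themselves unital and completely positive. Hence $\psi$ is a unital complete order isomorphism, and by construction $\psi(u_j)=\phi_v(\mathfrak u_j)=v_j$ for each $j$. This verifies the defining conditions for $u\simeq_{\rm ord} v$.

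There is essentially no obstacle here, since the hard work has already been done in Theorem \ref{main result proof}; the corollary is just the observation that $\simeq_{\rm ord}$ is a transitive (in fact an equivalence) relation when one is careful to keep track of the generators. The only mild point to check is that the composition and inversion of unital complete order isomorphisms yield unital complete order isomorphisms that send generators to generators in the required way, which is immediate from the definitions.
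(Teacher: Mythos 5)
Your argument is exactly the paper's: the corollary is deduced by composing the unital complete order isomorphisms $\phi_u$ and $\phi_v$ from Theorem \ref{main result proof}, i.e., by the transitivity of $\simeq_{\rm ord}$, and your verification that $\psi=\phi_v\circ\phi_u^{-1}$ sends $u_j$ to $v_j$ is correct. No issues.
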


Because an operator system $\osr\subseteq\M_d(\mathbb C)$ is closed under the adjoint operation $*$, 
the von Neumann Double Commutant Theorem \cite[Theorem I.9.1.1]{Blackadar-book}
implies that $\osr{''}=\cstar(\osr)$,
where $\mathcal X'$ denotes, for a set $\mathcal X$ of matrices, the commutant of 
$\mathcal X$ (i.e., the set of all matrices that commute with every matrix in $\mathcal X$), 
and $\mathcal X''$ denotes the commutant of $\mathcal X'$. In particular, if $\osr'=\{\lambda 1_d\,|\,\lambda\in\mathbb C\}$, then 
$\cstar(\osr)=\M_d(\mathbb C)$.

\begin{definition} A spin system $\mathcal S\subset \mathcal U_d$ is \emph{irreducible} if $\mathcal S'=\{\lambda 1_d\,|\,\lambda\in\mathbb C\}$.
\end{definition}

The following result was stated as Corollary \ref{main result cor3} in the Introduction. 

\begin{proposition}\label{main result cor3 proof} If $\M_d(\mathbb C)$ contains an irreducible $m$-tuple
$u=(u_1,\dots,u_m)$ of spin unitaries, then every $m$-tuple $v=(v_1,\dots,v_m)$ of $d\times d$ spin unitaries is also irreducible and
$u\simeq_{\mathcal U}v$.
\end{proposition}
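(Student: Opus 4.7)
My plan is to leverage Theorem \ref{main result proof} together with Hamana's Theorem \ref{hamana} to pass from complete order equivalence to unitary equivalence. First I would apply the main result (through Corollary \ref{main result cor1}) to obtain a unital complete order isomorphism $\phi:\mathcal O_u\to\mathcal O_v$ satisfying $\phi(u_j)=v_j$ for each $j$. The task then reduces to showing that this $\phi$ is implemented by a unitary conjugation, and simultaneously deducing that $v$ must be irreducible.

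The next step is to identify $\cstare(\mathcal O_u)$. Since $u$ is irreducible, the von Neumann Double Commutant Theorem gives $\cstar(\mathcal O_u)=(\mathcal O_u)''=\M_d(\mathbb C)$, which is simple. A standard consequence of Hamana's theorem is that whenever an operator system $\osr$ generates a simple unital C$^*$-algebra $\A$, one has $\cstare(\osr)=\A$, because the canonical surjection $\A\to\cstare(\osr)$ guaranteed by Theorem \ref{hamana} must have trivial kernel by simplicity. Hence $\cstare(\mathcal O_u)=\M_d(\mathbb C)$.

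Now I would apply Hamana's Theorem to the unital complete order embedding obtained by composing $\phi$ with the inclusion, $\mathcal O_u\xrightarrow{\phi}\mathcal O_v\hookrightarrow\cstar(\mathcal O_v)$. This is an embedding whose image generates the target, so the universal property yields a surjective $*$-homomorphism $\pi:\cstar(\mathcal O_v)\to\cstare(\mathcal O_u)=\M_d(\mathbb C)$ satisfying $\pi(v_j)=u_j$ for each $j$. But $\cstar(\mathcal O_v)$ is itself a unital $*$-subalgebra of $\M_d(\mathbb C)$, so a comparison of dimensions on either side of the surjection $\pi$ forces $\cstar(\mathcal O_v)=\M_d(\mathbb C)$ and makes $\pi$ a $*$-automorphism of $\M_d(\mathbb C)$. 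Another application of the Double Commutant Theorem then yields $(\mathcal O_v)'=\mathbb C\cdot 1_d$, which is precisely the irreducibility of $v$.

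Finally, every $*$-automorphism of $\M_d(\mathbb C)$ is inner, so $\pi=\ad(w^*)$ for some $w\in\mathcal U_d$, and therefore $u_j=\pi(v_j)=w^*v_jw$ for every $j$, which is exactly $u\simeq_{\mathcal U}v$. The principal obstacle is the Hamana/dimension-counting step: this is where the purely order-theoretic isomorphism $\phi$ supplied by the main theorem gets upgraded, using the simplicity of $\M_d(\mathbb C)$, to a genuine $*$-isomorphism of the generated C$^*$-algebras. Once that is in place, the innerness of $*$-automorphisms of matrix algebras finishes both halves of the statement at once.
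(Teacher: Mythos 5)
Your proof is correct, but it reaches the conclusion by a genuinely different route than the paper. You work entirely on the C$^*$-envelope side: from $\cstar(\mathcal O_u)=\M_d(\mathbb C)$ and the simplicity of $\M_d(\mathbb C)$ you get $\cstare(\mathcal O_u)=\M_d(\mathbb C)$ (this is exactly Proposition \ref{erg}(2) of the paper), and then Hamana's universal property applied to $\mathcal O_u\xrightarrow{\phi}\mathcal O_v\hookrightarrow\cstar(\mathcal O_v)$ hands you a surjective $*$-homomorphism $\pi:\cstar(\mathcal O_v)\to\M_d(\mathbb C)$ with $\pi(v_j)=u_j$; the dimension count forces $\cstar(\mathcal O_v)=\M_d(\mathbb C)$ and makes $\pi$ an automorphism, hence inner. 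The paper instead stays at the level of completely positive maps: it extends $\phi$ and $\phi^{-1}$ to ucp maps $\Phi,\Psi$ on $\M_d(\mathbb C)$ via Arveson's Extension Theorem, invokes Arveson's Boundary Theorem (the rigidity of the identity map on an irreducible operator system) to conclude $\Psi\circ\Phi=\mathrm{id}_{\M_d(\mathbb C)}$, and then uses the fact that a ucp automorphism of $\M_d(\mathbb C)$ with completely positive inverse is a unitary conjugation. The two arguments are close cousins --- Arveson's boundary theorem is the engine behind the identification of the C$^*$-envelope of an irreducible system --- but yours has the advantage of relying only on machinery the paper has already set up explicitly (Theorem \ref{hamana} and Proposition \ref{erg}) together with elementary facts about finite-dimensional algebras, whereas the paper must import the boundary theorem and the structure of order automorphisms of $\M_d(\mathbb C)$ as external results. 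One small point worth making explicit in your write-up: the canonical embedding $\iota_{\rm e}$ of $\mathcal O_u$ into its C$^*$-envelope may be taken to be the identity inclusion into $\M_d(\mathbb C)$ (by uniqueness of the envelope and simplicity of $\M_d(\mathbb C)$), which is what justifies reading the identity $\iota_{\rm e}=\pi\circ\phi$ as $\pi(v_j)=u_j$.
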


\begin{proof} By hypothesis, there exists an irreducible $m$-tuple $u=(u_1,\dots,u_m)$ of spin unitaries. Therefore, the commutant of the
operator system
$\mathcal O_u$ is $\{\lambda 1_d\,|\,\lambda\in\mathbb C\}$, implying that $\A_u=\cstar(\mathcal O_u)=\M_d(\mathbb C)$. 

Select any other $m$-tuple $v=(v_1,\dots,v_m)$ of $d\times d$ spin unitaries. By Theorem \ref{main result proof}, there is a unital complete order isomorphism
$\phi:\mathcal O_u\rightarrow\mathcal O_v$ in wihch $\phi(u_j)=v_j$, for every $j$. Let $\psi=\phi^{-1}$, as a ucp map $\mathcal O_v\rightarrow\mathcal O_u$.,
and let $\Phi,\Psi:\M_d(\mathbb C)\rightarrow\M_d(\mathbb C)$ be ucp extensions of $\phi$ and $\psi$, respectively. Therefore, $\Psi\circ\Phi$ is a ucp
extension of $\psi\circ\phi=\mbox{id}_{\mathcal O_u}$. By Arveson's Boundary Theorem \cite[Theorem 2.11]{arveson1972} (see also 
\cite{farenick2011b}, \cite[Lemma 5.11]{kriel2019}), the irreducibility of $\mathcal O_u$ implies that $\mbox{id}_{\mathcal O_u}$ has a unique ucp 
extension to $\M_d(\mathbb C)$. Hence, $\Psi\circ\Phi=\mbox{\rm id}_{\M_d(\mathbb C)}$. In other words, $\Phi$ is a unital complete positive linear map
of $\M_d(\mathbb C)$ with a completely positive inverse, which implies (by many results \cite{Stormer-book}; e.g., Wigner's Theorem)
that $\Phi$--and, hence, $\phi$--is a unitary equivalence transformation $x\mapsto w^*xw$, for some 
$w\in\mathcal U_d$. Consequently, $v$ is also an irreducible $m$-tuple of spin unitaries and $u\simeq_{\mathcal U}v$.
\end{proof}

%%%%%%%%%%%%%%%%%%%%%%%%%%%%%
\section{The C$^*$-Envelope of a Spin System}

The following two consequences of Hamana's Theorem (Theorem \ref{hamana}) are of use to us.

\begin{proposition}\label{erg} Suppose that $\osr\subseteq\M_d(\mathbb C)$ is an operator system of matrices.
\begin{enumerate}
\item If $\ost\subseteq\M_{\tilde d}(\mathbb C)$ is an operator system of matrices such that $\ost\simeq_{\rm ord}\osr$, then 
$\cstare(\ost)=\cstare(\osr)$.
\item If $\cstar(\osr)=\M_d(\mathbb C)$, then $\cstare(\osr)=\M_d(\mathbb C)$.
\end{enumerate}
\end{proposition}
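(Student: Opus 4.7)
The plan is to derive both statements directly from the universal property packaged in Hamana's Theorem (Theorem~\ref{hamana}). Recall that the theorem says every unital complete order embedding $\phi:\osr\to\A$ whose image generates $\A$ factors the canonical embedding $\iota_{\rm e,\osr}$ through a surjective $*$-homomorphism $\pi:\A\to\cstare(\osr)$. The whole proof amounts to applying this property twice in (1) and once in (2).

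For part (1), let $\phi:\osr\to\ost$ be a unital complete order isomorphism. Composing with $\iota_{\rm e,\ost}:\ost\to\cstare(\ost)$ produces a unital complete order embedding $\iota_{\rm e,\ost}\circ\phi:\osr\to\cstare(\ost)$ whose image generates $\cstare(\ost)$ (since $\iota_{\rm e,\ost}(\ost)$ generates $\cstare(\ost)$ and $\phi$ is surjective). Hamana's Theorem then yields a surjective $*$-homomorphism
\[
\pi:\cstare(\ost)\longrightarrow\cstare(\osr),\qquad \iota_{\rm e,\osr}=\pi\circ\iota_{\rm e,\ost}\circ\phi.
\]
Running the same argument with $\phi^{-1}$ gives a surjective $*$-homomorphism $\pi':\cstare(\osr)\to\cstare(\ost)$ satisfying $\iota_{\rm e,\ost}=\pi'\circ\iota_{\rm e,\osr}\circ\phi^{-1}$. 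The composition $\pi'\circ\pi$ is a surjective $*$-endomorphism of $\cstare(\ost)$ that acts as the identity on the generating set $\iota_{\rm e,\ost}(\ost)$, hence equals $\mbox{id}_{\cstare(\ost)}$; symmetrically, $\pi\circ\pi'=\mbox{id}_{\cstare(\osr)}$. Thus $\pi$ is a $*$-isomorphism, giving $\cstare(\ost)\cong\cstare(\osr)$.

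For part (2), assume $\cstar(\osr)=\M_d(\mathbb C)$. The inclusion $\iota:\osr\hookrightarrow\M_d(\mathbb C)$ is trivially a unital complete order embedding whose image generates $\M_d(\mathbb C)$. Hamana's Theorem produces a surjective $*$-homomorphism $\pi:\M_d(\mathbb C)\to\cstare(\osr)$. Since $\M_d(\mathbb C)$ is simple, the kernel of $\pi$ is either $\{0\}$ or all of $\M_d(\mathbb C)$; but $\pi$ is unital, so the kernel is $\{0\}$. Therefore $\pi$ is an isomorphism and $\cstare(\osr)\cong\M_d(\mathbb C)$.

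The argument is essentially mechanical once Hamana's Theorem is in hand; the only point requiring care is the observation in (1) that a $*$-endomorphism of a C$^*$-algebra which fixes a generating $*$-closed subspace must be the identity, which is a standard consequence of $*$-homomorphisms being determined by their behavior on generators. No serious obstacle arises.
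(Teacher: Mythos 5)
Your proof is correct, and since the paper states Proposition \ref{erg} without proof---presenting both parts as direct consequences of Hamana's Theorem---your argument (factoring the canonical embeddings through each other via the universal property for part (1), and invoking simplicity of $\M_d(\mathbb C)$ for part (2)) is precisely the intended one. The one point worth flagging is that Hamana's Theorem as quoted in the paper omits the hypothesis that $\phi(\osr)$ generate $\A$; you correctly supply and verify that hypothesis in both applications.
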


Note that, by the Double Commutant Theorem and Proposition \ref{erg}, 
if $\mathcal S$ is an irreducible spin system, then
$\cstar_e(\mathcal O_\mathcal S)=\M_d(\mathbb C)$. Thus, focusing upon irreducible spin systems is important. 

The next result is based on a well known construction, but we do not know of a specific reference with regards to the irreducibility of the construction, and
so a (straightforward) proof is given below. 

 \begin{lemma}\label{existence of irr ss} If $\mathcal S=\{u_1,\dots,u_m\}$ is an irreducible spin system of $d\times d$ unitary matrices,
then
\begin{equation}\label{classical}
\mathcal Q=\left\{u_j\otimes 1_2, u_m\otimes\sigma_X, u_m\otimes \sigma_Y, u_m\otimes \sigma_Z\,|\,j=1,\dots,m-1  \right\}.
\end{equation}
is an irreducible spin system in $\M_{d}(\mathbb C)\otimes\M_2(\mathbb C)=\M_{2d}(\mathbb C)$.
\end{lemma}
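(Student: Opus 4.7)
The plan is to verify two things separately: first that $\mathcal Q$ really is a spin system (i.e.\ pairwise anticommuting selfadjoint unitaries), and second that its commutant in $\M_{2d}(\mathbb C)$ is trivial. Selfadjointness and unitarity of each element of $\mathcal Q$ follow at once from the corresponding properties of the factors, so the only substantive check is pairwise anticommutation. That falls into three cases, which I would handle in order: (a) $u_i\otimes 1_2$ against $u_j\otimes 1_2$ for distinct $i,j<m$, which inherits anticommutation directly from $\mathcal S$; (b) $u_j\otimes 1_2$ against $u_m\otimes\sigma_\alpha$ for $j<m$ and $\alpha\in\{X,Y,Z\}$, where anticommutation comes from the $\mathcal S$-relation $u_ju_m=-u_mu_j$ (the second tensor leg commutes); and (c) $u_m\otimes\sigma_\alpha$ against $u_m\otimes\sigma_\beta$ for $\alpha\neq\beta$, where $u_m^2=1$ makes the first leg commute and $\sigma_\alpha\sigma_\beta=-\sigma_\beta\sigma_\alpha$ supplies the sign.

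For irreducibility, I would exploit that $\{1_2,\sigma_X,\sigma_Y,\sigma_Z\}$ is a $\mathbb C$-basis of $\M_2(\mathbb C)$, so any $T$ in the commutant of $\mathcal Q$ has a unique decomposition
\[
T \;=\; T_0\otimes 1_2 + T_X\otimes\sigma_X + T_Y\otimes\sigma_Y + T_Z\otimes\sigma_Z,
\]
with $T_0,T_X,T_Y,T_Z\in\M_d(\mathbb C)$. Commutation of $T$ with each $u_j\otimes 1_2$, for $j=1,\dots,m-1$, immediately forces each coefficient $T_\alpha$ to commute with every $u_j$ with $j<m$. The heart of the argument is then to use commutation with $u_m\otimes\sigma_X$ and $u_m\otimes\sigma_Y$: expanding both sides and matching tensor-basis components via the Pauli identities $\sigma_\alpha\sigma_\beta=\pm i\sigma_\gamma$ for $\alpha\neq\beta$ yields, for each of $T_X,T_Y,T_Z$, simultaneous relations of the form $T_\alpha u_m = u_m T_\alpha$ and $T_\alpha u_m = -u_m T_\alpha$; since $u_m$ is invertible, this forces $T_X=T_Y=T_Z=0$.

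What remains is $T=T_0\otimes 1_2$ with $T_0$ commuting with $u_1,\dots,u_{m-1}$. Checking commutation with $u_m\otimes\sigma_Z$ (now trivial on the Pauli side) gives $T_0u_m=u_mT_0$, so $T_0$ commutes with every element of $\mathcal S$. The hypothesized irreducibility of $\mathcal S$ then gives $T_0\in\mathbb C\,1_d$, and therefore $T\in\mathbb C\,1_{2d}$, which is exactly the irreducibility of $\mathcal Q$.

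The only real obstacle is bookkeeping: the sign tracking in step (c) and, more tediously, in the Pauli expansions of the commutator relations with $u_m\otimes\sigma_X$ and $u_m\otimes\sigma_Y$. Nothing conceptual is at stake beyond the observation that an invertible $u_m$ cannot both commute and anticommute with a nonzero matrix, so I expect the write-up to be linear and routine once the Pauli products $\sigma_X\sigma_Y=i\sigma_Z$, $\sigma_Y\sigma_Z=i\sigma_X$, $\sigma_Z\sigma_X=i\sigma_Y$ are tabulated up front.
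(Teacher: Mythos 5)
Your overall strategy is sound and it is genuinely different from the paper's. The paper first observes that the commutant of $\mathcal Q$ is spanned by its selfadjoint elements, writes a selfadjoint commutant element $z$ as a $2\times 2$ block matrix $\left[\begin{smallmatrix} a_{11}&a_{12}\\ a_{12}^*&a_{22}\end{smallmatrix}\right]$ over $\M_d(\mathbb C)$, and extracts the commutation/anticommutation constraints block by block, eventually showing $a_{12}+a_{12}^*$ and $a_{12}-a_{12}^*$ are scalars and then that $a_{12}=0$ because it would have to both commute and anticommute with $u_m$. Your Pauli-basis expansion $T=\sum_\alpha T_\alpha\otimes\sigma_\alpha$ of an \emph{arbitrary} commutant element is essentially a change of coordinates on the same data ($T_0,T_Z$ encode the diagonal blocks, $T_X,T_Y$ the off-diagonal ones), but it avoids the reduction to selfadjoint elements and replaces the paper's somewhat ad hoc block manipulations with a uniform ``match coefficients in a basis'' argument. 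Both proofs rest on the same punchline: an invertible matrix cannot both commute and anticommute with a nonzero matrix.

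There is one bookkeeping error you should fix. Commutation with $u_m\otimes\sigma_X$ and $u_m\otimes\sigma_Y$ does \emph{not} give both a commutation and an anticommutation relation for all three of $T_X,T_Y,T_Z$. Matching coefficients, the relation with $u_m\otimes\sigma_\alpha$ yields $T_\alpha u_m=u_mT_\alpha$ (from the $1_2$-component), $T_0u_m=u_mT_0$ (from the $\sigma_\alpha$-component), and $T_\beta u_m=-u_mT_\beta$ for the two indices $\beta\neq\alpha$ (from the cross terms $\sigma_\beta\sigma_\alpha=-\sigma_\alpha\sigma_\beta$). So $\sigma_X$ and $\sigma_Y$ together kill $T_X$ and $T_Y$, but they only tell you twice that $T_Z$ \emph{anticommutes} with $u_m$; you need the $1_2$-component of the $u_m\otimes\sigma_Z$ relation to get $T_Zu_m=u_mT_Z$ and hence $T_Z=0$. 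Since you invoke $u_m\otimes\sigma_Z$ anyway in your last step (where, incidentally, $T_0u_m=u_mT_0$ was already available from the earlier relations), the repair is a one-line reallocation rather than a gap in the argument.
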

 
\begin{proof} Consider the set $\mathcal Q\subset\mathcal U_{2d}$ defined by
\[
\mathcal Q=\left\{u_j\otimes 1_2, u_m\otimes\sigma_X, u_m\otimes \sigma_Y, u_m\otimes \sigma_Z\,|\,j=1,\dots,m-1  \right\}.
\]
Each element of $\mathcal Q$ is a selfadjoint unitary and any two distinct elements anticommute. 
Hence, $\mathcal Q$ is a spin system. We now show that $\mathcal Q$ is an irreducible spin system.

Because every element of $\mathcal Q$ is selfadjoint, a matrix $z$ commutes with each element of $\mathcal Q$ if and only if $z^*$ commutes 
with each element of $\mathcal Q$. 
Therefore, the space of matrices commuting with the elements of $\mathcal Q$ is spanned by selfadjoint matrices. Suppose, therefore, that a selfadjoint matrix
$z\in \M_{2d}(\mathbb C)$ commutes with every element of $\mathcal Q$. Identifying
$z\in \M_{2d}(\mathbb C)$ with $ \M_2\left( \M_{d}(\mathbb C)\right)$, the selfadjoint matrix $z$ can written as
\[
 z=\begin{bmatrix} a_{11}&a_{12} \\ a_{12}^*&a_{22} \end{bmatrix},
\]
for some $a_{11},a_{12},a_{22}\in \M_{d}(\mathbb C)$. Likewise, $u_j\otimes 1_2$ and $u_m\otimes\sigma_Z$, for $j=1,\dots,m-1$, are given by
\[
\begin{bmatrix} u_j&0 \\ 0&u_j\end{bmatrix} \mbox{ and } \begin{bmatrix} u_m&0 \\ 0&-u_m \end{bmatrix}.
\]
The commutation relations $z(u_j\otimes 1_2)=(u_j\otimes 1_2)z$ and $z(u_m\otimes\sigma_Z)=(u_m\otimes\sigma_Z)z$ yield
\[
\begin{bmatrix} a_{11}u_j&a_{12}u_j \\ a_{12}^*u_j&a_{22}u_j \end{bmatrix}=\begin{bmatrix} u_ja_{11}&u_ja_{12} \\ u_ja_{12}^*&u_ja_{22} \end{bmatrix}
\]
for $j=1,\dots,m-1$, and 
\[
\begin{bmatrix} a_{11}u_m&-a_{12}u_m \\ a_{12}^*u_m& -a_{22}u_m \end{bmatrix} = \begin{bmatrix} u_ma_{11}&u_ma_{12} \\ -u_ma_{12}^*&-u_ma_{22} \end{bmatrix}.
\]
Therefore, $a_{12}$ commutes with $u_j$ for $j=1,\dots,m-1$. Furthermore, 
$a_{11}$ and $a_{22}$ commute with every element of $\mathcal S$, which implies that $a_{jj}=\alpha_{jj}1_{2n}$ for some $\alpha_{jj}\in\mathbb R$.
Hence, $z$ has the form
\[
z= \begin{bmatrix} \alpha_{11}1_{2n}&a_{12} \\ a_{12}^*&\alpha_{22}1_{2n} \end{bmatrix}.
\]
Using the commutation relation $z(u_m\otimes\sigma_X)=(u_m\otimes\sigma_X)z$ and the identification 
$u_m\otimes\sigma_X=\begin{bmatrix} 0&u_m \\ u_m&0 \end{bmatrix}$, we obtain
\[
\begin{bmatrix} u_ma_{12}^*&\alpha_{22}u_m \\ \alpha_{11}u_m&u_ma_{12} \end{bmatrix} 
= \begin{bmatrix} a_{12}u_m&\alpha_{11}u_m \\ \alpha_{22}u_m&a_{12}^*u_m \end{bmatrix},
\]
which yields $\alpha_{11}=\alpha_{22}$ and $(a_{12}+a_{12}^*)u_m=u_m(a_{12}+a_{12}^*)$. 
Because $a_{12}+a_{12}^*$ also commutes with each $u_j$ for $j=1,\dots,m-1$, we conclude that
$a_{12}+a_{12}^*=\lambda 1_{d}$, for some $\lambda\in\mathbb R$. In setting $\alpha=\alpha_{11}$, 
the commutation relation $z(u_m\otimes\sigma_Y)=(u_m\otimes\sigma_Y)z$
yields
\[
\begin{bmatrix} -\im a_{12}u_m&\alpha\im u_m \\ \alpha\im u_m&-\im a_{12}^*u_m \end{bmatrix}
=
\begin{bmatrix} -\im u_ma_{12}^*&-\alpha\im u_m \\ \alpha\im u_m& \im u_ma_{12} \end{bmatrix} .
\]
Thus, $a_{12}-a_{12}^*$ commutes with $u_m$ and with  each $u_j$ for $j=1,\dots,m-1$, we 
conclude that $a_{12}-a_{12}^*=\mu 1_{d}$ for some scalar $\mu\in\mathbb R$, and so
$a_{12}=(\lambda+ i \mu)1_{d}$, which is a scalar multiple of the identity matrix. 
Therefore, $a_{12}$ commutes with every matrix. However, because $a_{12}$ both
commutes and anticommutes with $u_m$, this scalar must be zero. Hence, $z$ is a 
scalar multiple of the identity matrix, which proves that $\mathcal Q$ is an irreducible spin system.
\end{proof}

\begin{theorem}\label{envelope thm} 
$\cstare(\mathcal O_{{\rm spin}(2k)})=\cstare(\mathcal O_{{\rm spin}(2k+1)})=\displaystyle\bigotimes_1^k\M_{2 }(\mathbb C)$, for every $k\in\mathbb C$.
\end{theorem}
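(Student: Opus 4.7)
The plan is to produce, for each $k\in\mathbb{N}$, concrete irreducible spin systems of cardinalities $2k$ and $2k+1$ inside $\M_{2^k}(\mathbb{C})$, and then to use Theorem \ref{main result proof} together with Proposition \ref{erg} to transport the C$^*$-envelope computation from this concrete spatial model back to the universal operator system $\mathcal{O}_{{\rm spin}(m)}$.

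The construction proceeds by induction on $k$. For the base case $k=1$, the pair $\{\sigma_X,\sigma_Y\}$ and the Pauli triple $\mathcal{P}=\{\sigma_X,\sigma_Y,\sigma_Z\}$ are irreducible spin systems in $\M_2(\mathbb{C})$; irreducibility follows from a direct commutant calculation (or from the observation that $\mathcal{O}_\mathcal{P}=\M_2(\mathbb{C})$ already). The inductive step applies Lemma \ref{existence of irr ss} to an irreducible spin system of cardinality $m$ in $\M_{2^j}(\mathbb{C})$ to produce one of cardinality $m+2$ in $\M_{2^{j+1}}(\mathbb{C})$. Iterating this $k-1$ times starting from the two base cases yields irreducible spin systems $\mathcal{S}_{2k}$ and $\mathcal{S}_{2k+1}$ of the required cardinalities inside $\M_{2^k}(\mathbb{C})$.

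Given such an irreducible spin system $\mathcal{S}$ of cardinality $m\in\{2k,2k+1\}$ in $\M_{2^k}(\mathbb{C})$, the Double Commutant Theorem gives $\cstar(\mathcal{O}_\mathcal{S})=\M_{2^k}(\mathbb{C})$, and Proposition \ref{erg}(2) then yields $\cstare(\mathcal{O}_\mathcal{S})=\M_{2^k}(\mathbb{C})$. By Theorem \ref{main result proof} there is a unital complete order isomorphism $\mathcal{O}_{{\rm spin}(m)}\simeq_{\rm ord}\mathcal{O}_\mathcal{S}$, so Proposition \ref{erg}(1) delivers $\cstare(\mathcal{O}_{{\rm spin}(m)})=\M_{2^k}(\mathbb{C})$. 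The standard identification $\M_{2^k}(\mathbb{C})\cong\bigotimes_1^k\M_2(\mathbb{C})$ finishes the proof.

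The only substantive content is the verification that the inductive construction preserves irreducibility, and this is already encapsulated in Lemma \ref{existence of irr ss}. The rest of the argument is essentially bookkeeping on cardinalities and dimensions (each application of the lemma doubles $d$ and raises the cardinality by $2$), so I do not anticipate any genuine obstacle beyond the careful indexing of the induction.
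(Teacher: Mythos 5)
Your argument is correct and follows the same overall strategy as the paper: build concrete irreducible spin systems in $\M_{2^k}(\mathbb C)$ by iterating Lemma \ref{existence of irr ss}, conclude $\cstar(\mathcal O_{\mathcal S})=\M_{2^k}(\mathbb C)$ from irreducibility and the Double Commutant Theorem, and transport the computation to $\mathcal O_{{\rm spin}(m)}$ via Theorem \ref{main result proof} and Proposition \ref{erg}. The one place you genuinely diverge is the even case. The paper runs only the odd induction, producing $\mathcal Q_k$ of cardinality $2k+1$, and then obtains the $2k$-element system by \emph{deleting} the element $\bigotimes_1^k\sigma_Z$; the cost of that route is the extra verification that the remaining elements $w_1,\dots,w_{2k}$ still generate the full algebra, via the product identity $\bigotimes_1^k\sigma_Z=i^{-k}\prod_{j=1}^k w_{2j-1}w_{2j}$. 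You instead run a second, parallel induction starting from the irreducible pair $\{\sigma_X,\sigma_Y\}$, which avoids that product computation entirely; the only thing you must supply is that Lemma \ref{existence of irr ss} applies with $m=2$ (it does -- nothing in its proof requires $m\ge 3$) and that $\{\sigma_X,\sigma_Y\}'=\mathbb C 1_2$, which follows since a matrix commuting with $\sigma_X$ and $\sigma_Y$ commutes with $\sigma_X\sigma_Y=i\sigma_Z$ and hence with all of $\M_2(\mathbb C)$. Both routes are sound; yours trades the algebraic identity for a second strand of bookkeeping, and is arguably the cleaner of the two.
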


\begin{proof}  The C$^*$-algebra generated by any spin pair is the simple algebra
$\M_2(\mathbb C)$ (Corollary \ref{sp pr ca}), while the operator system spanned by the Pauli matrices
is $\M_2(\mathbb C)$.
As any spin pair or triple is completely order equivalent to the pair $(\sigma_X,\sigma_Y)$
or the triple $(\sigma_X,\sigma_Y,\sigma_Z)$ (by Theorem \ref{main result}), we obtain (from Proposition \ref{erg}) the 
following algebra equalities:
\[
\cstare(\mathcal O_{{\rm spin}(2)})=\cstare(\mathcal O_{{\rm spin}(3)})=\M_2(\mathbb C).
\]

Using the irreducible spin system $\mathcal Q_1=\{\sigma_X,\sigma_Y,\sigma_Z\}\subset\M_2(\mathbb C)$, the 
construction of the spin system in Lemma \ref{existence of irr ss} produces the following irreducible spin system 
$\mathcal Q_2\subset\M_2(\mathbb C)\otimes\M_2(\mathbb C)$ of $5$ elements:
\[
\begin{array}{rcl}
\mathcal Q_2&=&\{ \sigma_X\otimes 1,  \sigma_Y\otimes 1, \sigma_Z\otimes \sigma_X, \sigma_Z\otimes \sigma_Y, \sigma_Z\otimes \sigma_Z\} \\
&=& \mathcal Q_{2,-} \cup \{\sigma_Z\otimes \sigma_Z\},
\end{array}
\]
where $\mathcal Q_{2,-} =\mathcal Q_2 \setminus \{\sigma_Z\otimes \sigma_Z\}$.

Another iteration of the construction in Lemma \ref{existence of irr ss} yields an irreducible spin system $\mathcal Q_3$ of $7$ elements:
\[
\mathcal Q_3= \mathcal Q_{3,-} \cup \{\sigma_Z\otimes \sigma_Z\otimes\sigma_Z\},
\]
where
\[
\mathcal Q_{3,-}=\{  
\sigma_X\otimes 1 \otimes 1 ,
 \sigma_Y\otimes 1 \otimes 1 ,
 \sigma_Z\otimes \sigma_X \otimes 1,
 \sigma_Z\otimes \sigma_Y \otimes 1, 
 \sigma_Z\otimes \sigma_Z \otimes \sigma_X ,
\sigma_Z\otimes \sigma_Z \otimes \sigma_Y\}.
\]
In general,
\[
\mathcal Q_k= \mathcal Q_{k,-} \cup \left\{\bigotimes_1^k\sigma_Z \right\}.
\]

The key point to observe is that the elements of $\mathcal Q_{k,-}$ consist of $k$ pairs such that, in the order given by the iterative construction,
the product of each pair is a product tensor in which all factors are the identity matrix and one tensor factor is $\sigma_X\sigma_Y$. More specifically,
if
\[
\mathcal Q_{k,-}=\{w_1,w_2,w_3,w_4,\dots,w_{2k-1},w_{2k}\} \subset \bigotimes_1^k \M_2(\mathbb C),
\]
then
\[
\begin{array}{rcl}
w_1w_2&=&(\sigma_X\sigma_Y)\otimes 1 \otimes 1 \cdots \otimes 1 = i\left( \sigma_Z \otimes 1 \otimes 1 \cdots \otimes 1\right) \\
w_3w_4&=&1 \otimes (\sigma_X\sigma_Y)\otimes 1\cdots\otimes 1 = i\left( 1 \otimes \sigma_Z\otimes 1\cdots \otimes 1\right) \\
\vdots &=& \vdots \\
w_{2k-1}w_{2k}&=& 1 \otimes 1 \otimes 1 \cdots \otimes (\sigma_X\sigma_Y)= i \left(1 \otimes 1 \otimes 1 \otimes 1 \cdots \otimes \sigma_Z\right). 
\end{array}
\]
Hence,
\[
\bigotimes_1^k\sigma_Z = i^{-k} \displaystyle\prod_{j=1}^k w_{2j-1}w_{2j} \in \mbox{\rm Alg}\left(\mathcal Q_{k,-} \right) ,
\]
which shows that
\[
\cstar (\mathcal O_{\mathcal Q_{k,-}}) =\cstar (\mathcal O_{\mathcal Q_k}),
\]
for every $k\in \mathbb N$. Therefore, because $\mathcal Q_k$ is an irreducible spin system, 
\[
\cstar (\mathcal O_{\mathcal Q_{k,-}}) =\cstar (\mathcal O_{\mathcal Q_k})=\bigotimes_1^k\M_2(\mathbb C).
\]
Therefore, the C$^*$-envelopes of $\mathcal O_{\mathcal Q_{k,-}}$ and $\mathcal O_{\mathcal Q_{k}}$ are also 
$\displaystyle\bigotimes_1^k\M_2(\mathbb C)$.
 
Therefore, by replacing $\mathcal O_{{\rm spin}(2k)}$ with $\mathcal O_{\mathcal Q_{k,-}}$ and
$\mathcal O_{{\rm spin}(2k+1)}$ with $\mathcal O_{\mathcal Q_k}$ (by Theorem \ref{main result}),
we obtain
\[
\cstare(\mathcal O_{{\rm spin}(2k)})=\cstare(\mathcal O_{{\rm spin}(2k+1)})=\displaystyle\bigotimes_1^k\M_{2 }(\mathbb C).
\]
\end{proof}

%%%%%%%%%%%%%%%%%%%%%%%%%%%%%%%%%%%%%%%%%%%%%%%%%%%
\section{Free Spectrahedra and Dilations}

In this partly expository section, we apply the notions developed in this paper to examine some known results on
free spectrahedra and matrix ranges, and also prove a new result regarding the dilation of spin triples. 

\begin{definition}{\rm (\cite{kriel2019})} 
Suppose that $a=(a_1,\dots,a_m)$ is an $m$-tuple of selfadjoint $d\times d$ matrices. 
\begin{enumerate} 
\item The monic polynomial $L_a(t_1,\dots,t_m)=1_d-\displaystyle\sum_{j=1}^m t_ja_j$, in variables $t_1,\dots,t_m$, evaluated at
an $m$-tuple $h=(h_1,\dots,h_m)$ of $n\times n$ selfadjoint matrices, is the selfadjoint element $L_a(h)\in\M_d(\mathbb C)\otimes\M_n(\mathbb C)$
defined by
\[
L_a(h)=1_n\otimes 1_d - \sum_{j=1}^m h_j\otimes a_j.
\]
\item The \emph{free spectrahedron} determined by $a$ is the sequence $\mathcal D_a=\left(\mathcal D_{a,n}\right)_{n\in\mathbb N}$
of subsets
\[
\mathcal D_{a,n}=\left\{h=(h_1,\dots,h_n)\,|\,\mbox{each }h_j\in\M_n(\mathbb C)_{\rm sa} \mbox{ and }  L_a(h)\mbox{ is positive semidefinite}\right\}.
\]
\end{enumerate}
\end{definition}

The first result shows that the free spectrahedra determined by spin systems depends only upon the cardinality of the spin system, not upon the choice
of spin unitaries.

\begin{proposition}\label{fs1} If $u=(u_1,\dots,u_m)$ and $v=(v_1,\dots,v_m)$ are $m$-tuples of spin unitaries with $u_j\in\mathcal U_{d_1}$ and $v_k\in\mathcal U_{d_2}$, then
$\mathcal D_u=\mathcal D_v$.
\end{proposition}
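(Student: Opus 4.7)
The plan is to reduce the equality $\mathcal D_u = \mathcal D_v$ to the fact, delivered by Theorem \ref{main result} (via its corollary $u \simeq_{\rm ord} v$), that there is a unital complete order isomorphism $\phi \colon \mathcal O_u \to \mathcal O_v$ sending $u_j \mapsto v_j$ for each $j$. The key observation is that the monic pencil $L_u(h)$, viewed as a matrix in $\M_n(\mathbb C) \otimes \M_{d_1}(\mathbb C)$, actually has all of its block entries inside the operator system $\mathcal O_u$; that is, $L_u(h) \in \M_n(\mathcal O_u)$, since it is a linear combination of $1_{d_1}$ and the $u_j$ with coefficients in $\M_n(\mathbb C)$.

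With this in hand, I would argue as follows. First, unpack the definition to note that $h \in \mathcal D_{u,n}$ exactly when $L_u(h) \in \M_n(\mathcal O_u)_+$; the ``only if'' is immediate, while the ``if'' follows because positivity as an element of $\M_n(\mathcal O_u)$ is equivalent to positivity as a matrix in $\M_{nd_1}(\mathbb C)$. Second, apply the ampliation $\phi^{(n)} \colon \M_n(\mathcal O_u) \to \M_n(\mathcal O_v)$ to $L_u(h)$. Because $\phi$ is unital and linear with $\phi(u_j) = v_j$, a direct computation yields
\[
\phi^{(n)}\!\left( L_u(h) \right) \;=\; 1_n \otimes 1_{d_2} - \sum_{j=1}^m h_j \otimes v_j \;=\; L_v(h).
\]
Third, since $\phi$ is a \emph{unital complete order isomorphism}, the map $\phi^{(n)}$ preserves positivity in both directions; hence $L_u(h) \in \M_n(\mathcal O_u)_+$ if and only if $L_v(h) \in \M_n(\mathcal O_v)_+$, which is precisely $h \in \mathcal D_{u,n} \iff h \in \mathcal D_{v,n}$.

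Iterating this bi-implication over all $n \in \mathbb N$ gives $\mathcal D_{u,n} = \mathcal D_{v,n}$ at every level, and therefore $\mathcal D_u = \mathcal D_v$. There is essentially no obstacle here: the entire content is already packaged in Theorem \ref{main result}, and the proof is just the observation that the defining pencil of a free spectrahedron lives naturally inside the associated operator system, so that complete order isomorphisms automatically intertwine the two spectrahedra. The only mild care required is the identification $\M_n(\mathbb C) \otimes \M_{d_j}(\mathbb C) \cong \M_n(\M_{d_j}(\mathbb C))$ used implicitly when writing $L_u(h)$ as an element of $\M_n(\mathcal O_u)$.
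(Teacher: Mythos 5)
Your proof is correct and follows essentially the same route as the paper: both observe that $L_u(h)$ lies in $\M_n(\mathcal O_u)$, that the ampliation $\phi^{(n)}$ of the unital complete order isomorphism from Corollary \ref{main result cor1} sends $L_u(h)$ to $L_v(h)$, and that positivity is preserved in both directions. No gaps.
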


\begin{proof}  The canonical linear bases of $\mathcal O_u$ and $\mathcal O_v$ are, respectively, $\{1_{d_1},u_1,\dots,u_m\}$ and
$\{1_{d_2},v_1,\dots,v_m\}$. In particular, by identifying $\M_n(\mathcal O_u)$ with $\M_n(\mathbb C)\otimes \mathcal O_u$,
a selfadjoint matrix $y\in \M_n(\mathcal O_u)$ is expressed as
\[
y= b_0\otimes 1_{d_1} + \sum_{j=1}^m b_j\otimes u_j,
\]
for some (uniquely determined) selfadjoint matrices $b_0,b_1,\dots,b_m\in\M_n(\mathbb C)$. Likewise, the element
\[
\tilde y = b_0\otimes 1_{d_2} + \sum_{j=1}^m b_j\otimes v_j
\]
is a selfadjoint elements of $\M_n(\mathbb C)\otimes \mathcal O_v=\M_n(\mathcal O_v)$.

Corollary \ref{main result cor1} asserts that the linear map 
$\phi:\mathcal O_u\rightarrow\mathcal O_v$ in which $\phi(1_{d_1})=1_{d_2}$ and $\phi(u_j)=v_j$, for each $j$, is a unital complete order isomorphism
of $\mathcal O_u$ and $\mathcal O_v$. Thus, the equation $\tilde y= (\mbox{\rm id}_{\M_n(\mathbb C)}\otimes\phi)[y]$ shows that
\[
b_0\otimes 1_{d_1} + \sum_{j=1}^m b_j\otimes u_j \mbox{ is positive semidefinite}
\]
if and only if
\[
b_0\otimes 1_{d_2} + \sum_{j=1}^m b_j\otimes v_j
\mbox{ is positive semidefinite}.
\]
In particular, given an $m$-tuples $h$ of selfadjoint matrices $h_j\in\M_n(\mathbb C)$,
$L_u(h)$ is positive semidefinite if and only if $L_v(h)$ is positive semidefinite. Hence, $\mathcal D_{u,n}=\mathcal D_{v,n}$,
for every $n\in\mathbb N$.
\end{proof}

In \cite{helton--klep--mcculllough--schweighofer2019}, the \emph{spin ball} $\mathfrak B_m^{\rm spin}$ is defined to be
the free spectrahedron
determined by a spin system constructed iteratively from the Pauli matrices, as in the proof of Theorem \ref{envelope thm}. In light of
Proposition \ref{fs1}, the spin ball can be defined unambiguouly as follows.

\begin{definition}[Spin Ball] The \emph{spin ball} $\mathfrak B_m^{\rm spin}$ is the free spectrahedron $\mathcal D_u$ for any
$m$-tuple $u$ of spin unitaries $u_j\in\mathcal U_d$.
\end{definition}

Free spectrahedra are easily seen to be matrix convex. Before defining matrix convexity below, note that  
the Cartesian product $\displaystyle\prod_1^m\M_n(\mathbb C)$ of $m$ copies of $\M_n(\mathbb C)$ is a unital C$^*$-algebra,
which makes the consideration of completely positive linear maps
between such spaces of interest. In particular, if $\gamma:\mathbb C^n\rightarrow\mathbb C^k$ is a linear transformation, 
then we have an induced completely
positive linear map 
\[
\Gamma:\displaystyle\prod_1^m\M_n(\mathbb C)\rightarrow\displaystyle\prod_1^m\M_k(\mathbb C)
\]
defined by $\Gamma(x)=\gamma^*\cdot x\cdot \gamma$, for all $x=(x_1,\dots,x_m)\in \displaystyle\prod_1^m\M_n(\mathbb C)$,
where
\[
\gamma^*\cdot x \cdot \gamma =\left( \gamma^* x_1 \gamma, \dots, \gamma^*x_m\gamma\right).
\]

\begin{definition}{\rm (\cite{kriel2019})} For a fixed $m\in\mathbb N$, suppose that $\mathcal K=\left( \mathcal K_n\right)_{n\in\mathbb N}$ is a 
sequence of subsets $\K_n\subseteq \displaystyle\prod_1^m\M_n(\mathbb C)$. If the sequence $\mathcal K$ has the property that
\[
\sum_{\ell=1}^t \gamma_\ell^*\cdot \Lambda_\ell\cdot\gamma_\ell \in \mathcal K_n, 
\]
for all $t\in\mathbb N$, all $\Lambda_\ell\in\mathcal K_{n_\ell}$, and 
all linear transformations $\gamma_\ell:\mathbb C^n\rightarrow\mathbb C^{n_\ell}$ such that
\[
\sum_{\ell=1}^t \gamma_\ell^* \gamma_\ell = 1_n,
\]
the $\mathcal K$ is said to be \emph{matrix convex}.
\end{definition}            

In addition to free spectrahedra, matrix ranges form another class of matrix convex sets. 

\begin{definition}[Matrix Range] If $x=(x_1,\dots,x_m)$ is an $m$-tuple of matrices $x_j\in\M_d(\mathbb C)$, then the 
\emph{matrix range} of $x$ is the sequence $W(x)=\left(W^n(x)\right)_{n\in\mathbb N}$ in which each
$W^n(x)$ is defined by
\[
W^n(x)=\left\{\phi(x)\,|\,\phi:\mathcal O_x\rightarrow\M_n(\mathbb C)\mbox{ is a ucp map}\right\},
\]
where $\mathcal O_x$ is the operator subsystem of $\M_d(\mathbb C)$ generated by $x$
and where $\phi(x)$ is the $m$-tuple of elements in $\M_n$ given by
\[
\phi(x)=\left(\phi(x_1),\dots,\phi(x_m) \right).
\]
\end{definition}

The relevance of matrix ranges to complete order equivalence
and unitary equivalence originates in the work of 
Arveson \cite{arveson1972}.  

\begin{theorem}\label{mr thm}{\rm (\cite{arveson1972,davisdon--dor-on--shalit--solel2017})}
The following statements are equivalent for 
tuples $x=(x_1,\dots,x_m)$ and $y=(y_1,\dots,y_m)$ of matrices
$x_j\in\M_{d_1}(\mathbb C)$ and $y_k\in\M_{d_2}(\mathbb C)$:
\begin{enumerate}
\item $x\simeq_{\rm ord} y$;
\item $W(x)=W(y)$.
\end{enumerate}
\end{theorem}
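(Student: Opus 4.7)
The plan is to prove both implications directly, leveraging the fact that each tuple occurs in its own matrix range via the identity map, and that matching ucp maps between two operator systems that agree on generators must be mutual linear inverses.

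For the direction (1) $\Rightarrow$ (2): suppose $\phi:\mathcal O_x\rightarrow\mathcal O_y$ is a unital complete order isomorphism with $\phi(x_j)=y_j$ for every $j$. If $\psi:\mathcal O_y\rightarrow\M_n(\mathbb C)$ is any ucp map, then the composition $\psi\circ\phi:\mathcal O_x\rightarrow\M_n(\mathbb C)$ is ucp and satisfies $(\psi\circ\phi)(x_j)=\psi(y_j)$, so $\psi(y)\in W^n(x)$, giving $W^n(y)\subseteq W^n(x)$. The reverse inclusion is obtained identically using the ucp inverse $\phi^{-1}$. Thus $W(x)=W(y)$.

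For the direction (2) $\Rightarrow$ (1): applying the identity map to $y$ shows that $y=(y_1,\dots,y_m)\in W^{d_2}(y)$, which by hypothesis equals $W^{d_2}(x)$. Hence there exists a ucp map $\phi:\mathcal O_x\rightarrow\M_{d_2}(\mathbb C)$ with $\phi(x_j)=y_j$ for every $j$. Because ucp maps are adjoint-preserving, $\phi(x_j^*)=y_j^*$, and $\phi(1_{d_1})=1_{d_2}$, so the image of $\phi$ is spanned by $\{1_{d_2},y_1,y_1^*,\dots,y_m,y_m^*\}$, which is precisely $\mathcal O_y$. Symmetrically, $x\in W^{d_1}(x)=W^{d_1}(y)$ furnishes a ucp map $\psi:\mathcal O_y\rightarrow\mathcal O_x$ with $\psi(y_j)=x_j$ for every $j$. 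The ucp composition $\psi\circ\phi:\mathcal O_x\rightarrow\mathcal O_x$ fixes $1_{d_1}$, each $x_j$, and each $x_j^*$, hence, by linearity, equals $\mathrm{id}_{\mathcal O_x}$; likewise $\phi\circ\psi=\mathrm{id}_{\mathcal O_y}$. Thus $\phi$ is a unital linear bijection with ucp inverse $\psi$, i.e., a unital complete order isomorphism, proving $x\simeq_{\rm ord}y$.

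I anticipate no serious obstacle, as the entire content is captured by the observation that the defining tuple already lies in its own matrix range at level $d_j$, so equality of matrix ranges immediately produces ucp maps between the operator systems sending generators to generators, and these automatically compose to identities. The only routine verification is that ucp maps preserve adjoints, which follows because they send selfadjoint elements to selfadjoint elements.
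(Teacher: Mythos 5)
The paper states this theorem as a known result (citing Arveson and Davidson--Dor-On--Shalit--Solel) and gives no proof of its own, so there is nothing to compare against; your argument is the standard one and it is correct. Both directions check out: for $(2)\Rightarrow(1)$ the identity map witnesses $y\in W^{d_2}(y)=W^{d_2}(x)$ and $x\in W^{d_1}(x)=W^{d_1}(y)$, the resulting ucp maps land in $\mathcal O_y$ and $\mathcal O_x$ because ucp maps preserve adjoints and the sets $\{1,x_j,x_j^*\}$, $\{1,y_j,y_j^*\}$ span the respective operator systems, and a linear map fixing a spanning set is the identity, so the two ucp maps are mutually inverse.
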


There are many examples of matrix ranges in the literature. The following example, which is relevant to the subject of the present paper,
can be deduced as a special case of \cite[Example 1]{farenick1992}.

\begin{example} Let $u \in\mathcal U_d$ be non-scalar selfadjoint unitary matrices. Then, for every $n\in\mathbb N$,
$h\in W^n(u)$ if and only if there exist  $a,b\in\M_n(\mathbb C)_+$ such that $a+b=1_n$ and $a-b=h$.
\end{example}

There is also a spatial version of the matrix range.

\begin{definition} If $x=(x_1,\dots,x_m)$ is an $m$-tuple of matrices $x_j\in\M_d(\mathbb C)$, then the 
\emph{spatial matrix range} of $x$ is the finite sequence
$W_{\rm s}(x)=\left(W^n(x)\right)_{n=1}^d$ in which each
$W_{\rm s}^n(x)$ is defined by
\[
W_{\rm s}^n(x)=\left\{\gamma^*\cdot x \cdot \gamma\,|\,
\gamma:\mathbb C^n\rightarrow \mathbb C^d\mbox{ is a linear isometry}\right\}.
\]
\end{definition}

Note that $W_{\rm s}^n(x)\subseteq W ^n(x)$ for every $n=1,\dots,d$. However, the spatial matrix range
lacks the strong feature of matrix convexity; indeed,  $W_{\rm s}^d(x)$ is the unitary orbit of $x$, and therefore
fails to contain 
any line segments (in the classical sense) whatsoever.

In the case of $n=1$, the spatial matrix range $W_{\rm s}^1(x)$ is better known in linear algebra as 
the (joint) numerical range of $x=(x_1,\dots,x_m)$. 
The following elementary calculation is well known to many linear algebraists.

\begin{example}
If $\sigma=(\sigma_X,\sigma_Y,\sigma_Z)$, then 
\begin{enumerate}
\item the spatial numerical range $W_{\rm s}^1(\sigma)$ is the unit Euclidean sphere in $\mathbb R^3$, and
\item the numerical range $W^1(\sigma)$ is the closed unit Euclidean ball in $\mathbb R^3$
\end{enumerate}
\end{example}

\begin{proof} Every unital positive linear functional $\phi:\M_2(\mathbb C)\rightarrow \mathbb C$ is a convex combination of linear functionals
of the form $\omega_\xi(x)=\langle x\xi,\xi \rangle$, for a unit vector $\xi\in\mathbb C^2$. Thus, $W^1(\sigma)$ is the convex hull
of
\[
W_{\rm s}^1(\sigma)=\left\{ \left(\langle \sigma_X\xi,\xi\rangle, \langle \sigma_Y\xi,\xi\rangle, \langle \sigma_Z\xi,\xi\rangle\right)
 \,|\,
\xi\in\mathbb C^2,\,\langle\xi,\xi\rangle=1\right\}.
\]
Thus, statement (2) follows by showing statement (1) holds. To obtain (1), note that because the inner product on $\mathbb C^2$
is not bilinear but sesquilinear, it is enough to compute the spatial numerical range using unit vectors of the form
$\xi=(\cos\theta) e_1 + e^{i\delta}(\sin\theta) e_2$, for all $\theta,\delta\in\mathbb R$. Since
\[
\begin{array}{rcl}
\langle\sigma_X\xi,\xi\rangle&= &2\Re(e^{i\delta}\sin\theta\,\cos\theta)=\cos\delta\sin(2\theta),
\\
\langle\sigma_Y\xi,\xi\rangle&= &2\Re(ie^{-i\delta}\sin\theta\,\cos\theta)=\sin\delta\sin(2\theta),\mbox{ and }
\\
\langle\sigma_Z\xi,\xi\rangle&=& \cos^2\theta-\sin^2\theta=\cos(2\theta),
\end{array}
\]
we obtain the spherical coordinates for the unit Euclidean sphere $S^2$ in $\mathbb R^3$.
Hence, $W_{\rm s}^1(\sigma)=S^2$.
\end{proof}

For each $m\in\mathbb N$, let $\mathbb B_m$ denote the closed unit Euclidean ball of $\mathbb R^m$. 

\begin{definition}[Max Ball]{\rm (\cite[\S 14.2.2]{helton--klep--mcculllough--schweighofer2019})}
For each $m\in\mathbb N$, the \emph{max ball} $\mathfrak B_m^{\rm max}$ is the sequence
$\mathfrak B_m^{\rm max}=\left(B_{m,n} \right)_{n\in\mathbb N}$ whereby an $m$-tuple
$h=(h_1,\dots,h_n)$ of selfadjoint $n\times n$ matrices belongs to  $B_{m,n}$ if and only if,
for every $m$-tuple $a=(a_1,\dots,a_m)$ of selfadjoint
$d\times d$ matrices and for every $d$, 
the $m$-tuple $h$ is necessarily an element $\mathcal D_{a,n}$ if $\mathcal D_{a,1}$ contains the Euclidean ball 
$\mathbb B_m$.
\end{definition} 

Combining results from \cite[\S 14.2.2]{helton--klep--mcculllough--schweighofer2019}), one obtains the following
criterion for membership in the max ball.

\begin{proposition}\label{criterion} An $m$-tuple $h=(h_1,\dots,h_n)$ of selfadjoint $n\times n$ matrices belongs to 
(some element of the sequence) $\mathfrak B_m^{\rm max}$ if and only if
\[
1_n\otimes 1_d - \sum_{j=1}^m h_j\otimes a_j \mbox{ is positive semidefinite}
\]
for for every $m$-tuple $a=(a_1,\dots,a_m)$ of selfadjoint
$d\times d$ matrices, and for every $d$, in which 
\[
W_{\rm s}^1(a)\subseteq\mathbb B_m.
\]
\end{proposition}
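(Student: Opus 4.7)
The plan is to reduce the claimed criterion to a single polar-duality statement relating the free spectrahedron $\mathcal D_{a,1}$ to the spatial numerical range $W_{\rm s}^1(a)$ of a selfadjoint $m$-tuple $a$. Concretely, I would establish
\[
\mathcal D_{a,1}\supseteq\mathbb B_m\quad\Longleftrightarrow\quad W_{\rm s}^1(a)\subseteq\mathbb B_m.
\]
Once this equivalence is in hand, the stated proposition is just a rewording of the definition of $\mathfrak B_m^{\rm max}$: the membership condition on $h$ quantifies over all selfadjoint tuples $a$ satisfying $\mathcal D_{a,1}\supseteq\mathbb B_m$, and the equivalence lets one replace that hypothesis by $W_{\rm s}^1(a)\subseteq\mathbb B_m$. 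Both implications in the proposition then drop out directly from the definition.

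To prove the equivalence, I would unfold both conditions in terms of unit vectors $\xi\in\mathbb C^d$. Set $w(\xi)=(\langle a_1\xi,\xi\rangle,\dots,\langle a_m\xi,\xi\rangle)$; since each $a_j$ is selfadjoint, $w(\xi)\in\mathbb R^m$. For $t\in\mathbb R^m$, the element $L_a(t)=1_d-\sum_j t_j a_j$ is positive semidefinite precisely when $\sum_j t_j w_j(\xi)\le 1$ for every unit vector $\xi$. Therefore $\mathcal D_{a,1}\supseteq\mathbb B_m$ asserts that $\sum_j t_j w_j(\xi)\le 1$ holds for every $t\in\mathbb B_m$ \emph{and} every unit $\xi$. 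Taking the supremum over $t\in\mathbb B_m$ and invoking the self-duality of the Euclidean $\ell^2$-ball (Cauchy--Schwarz), this collapses to $\|w(\xi)\|_2\le 1$ for every unit $\xi$, which is exactly $W_{\rm s}^1(a)\subseteq\mathbb B_m$.

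The argument is essentially bookkeeping; there is no real obstacle beyond keeping the quantifiers in the definition of $\mathfrak B_m^{\rm max}$ straight and remembering that $a$ ranges over \emph{selfadjoint} tuples, so that $w(\xi)\in\mathbb R^m$ and the polar-duality step lives in real Euclidean space. With that in place, Cauchy--Schwarz plus a verbatim transcription of the definition yields the criterion.
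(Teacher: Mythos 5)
Your argument is correct. The paper itself does not prove this proposition; it simply states that it follows by ``combining results from \cite[\S 14.2.2]{helton--klep--mcculllough--schweighofer2019}'', so your self-contained derivation is a genuine addition rather than a restatement of the paper's route. The key step checks out: for a selfadjoint tuple $a$, a real vector $t$ lies in $\mathcal D_{a,1}$ exactly when $\langle t, w(\xi)\rangle\le 1$ for every unit vector $\xi$, where $w(\xi)=(\langle a_1\xi,\xi\rangle,\dots,\langle a_m\xi,\xi\rangle)\in\mathbb R^m$; quantifying over $t\in\mathbb B_m$ and using $\sup_{\|t\|_2\le 1}\langle t,w\rangle=\|w\|_2$ converts the condition $\mathbb B_m\subseteq\mathcal D_{a,1}$ into $\sup_{\|\xi\|=1}\|w(\xi)\|_2\le 1$, i.e.\ $W_{\rm s}^1(a)\subseteq\mathbb B_m$. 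Substituting this equivalent hypothesis into the definition of $\mathfrak B_m^{\rm max}$ gives the proposition verbatim. The only point worth making explicit in a final write-up is that both suprema (over $t$ and over $\xi$) commute freely because the condition is a conjunction over both variables, and that selfadjointness of the $a_j$ is what places $w(\xi)$ in $\mathbb R^m$ so that the Euclidean self-duality applies; you flag both, so nothing is missing.
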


The max ball is not, at first glance, a free spectrahedron 
because the defining conditions for membership in $\mathfrak B_m^{\rm max}$
involve, in principle, infinitely many monic polynomials $L_a(t_1,\dots,t_m)$. However, 
in low dimensions, the max ball is a free spectrahedron, as shown by the following theorem
of Helton, Klep, McCullough, and Schweighoefer \cite[Corollary 14.15]{helton--klep--mcculllough--schweighofer2019}. The authors
of \cite{helton--klep--mcculllough--schweighofer2019} give two proofs of this result using dilation. We offer 
a third alternative below.

\begin{theorem}\label{spin ball thm} $\mathfrak B_1^{\rm spin}=\mathfrak B_1^{\rm max}$ and $\mathfrak B_2^{\rm spin}=\mathfrak B_2^{\rm max}$.
\end{theorem}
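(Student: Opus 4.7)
The plan is to prove the two inclusions separately for each of $m=1$ and $m=2$, using the characterization of the max ball in Proposition \ref{criterion}.

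The easy direction $\mathfrak B_m^{\rm max}\subseteq \mathfrak B_m^{\rm spin}$ follows by choosing $a=(\sigma_X)$ for $m=1$, or $a=(\sigma_X,\sigma_Y)$ for $m=2$, as the test tuple in Proposition \ref{criterion}. In both cases $W_{\rm s}^1(a)\subseteq \mathbb B_m$, as one reads off from the spherical-coordinates computation of $W_{\rm s}^1(\sigma_X,\sigma_Y,\sigma_Z)=S^2$ carried out in the preceding example (projecting onto the first one or two coordinates). Hence every $h\in \mathfrak B_m^{\rm max}$ satisfies $L_a(h)\geq 0$, which is exactly $h\in\mathfrak B_m^{\rm spin}$.

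For the reverse inclusion at $m=1$, a direct Schur complement shows that $h\in\mathfrak B_1^{\rm spin,n}$ iff $-1_n\leq h\leq 1_n$. For any selfadjoint $a\in\M_d(\mathbb C)$ with $W_{\rm s}^1(a)\subseteq[-1,1]$, equivalently $\|a\|\leq 1$, one then has $\|h\otimes a\|\leq 1$, so $L_a(h)=1-h\otimes a\geq 0$. Proposition \ref{criterion} gives $h\in \mathfrak B_1^{\rm max}$.

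The substance lies in proving $\mathfrak B_2^{\rm spin}\subseteq \mathfrak B_2^{\rm max}$. A Schur-complement analysis of $L_{(\sigma_X,\sigma_Y)}(h)$ shows that $h\in\mathfrak B_2^{\rm spin,n}$ if and only if $w:=h_1+\im h_2$ is a contraction on $\mathbb C^n$. I would then invoke the Sz.--Nagy dilation theorem to produce an isometry $V\colon\mathbb C^n\to\mathbb C^N$ and a unitary $U\in\M_N(\mathbb C)$ with $w=V^*UV$, and set $H_1=\Re(U)$, $H_2=\Im(U)$. The unitarity of $U$ forces $H_1,H_2$ to be commuting selfadjoints satisfying $H_1^2+H_2^2=1_N$, while taking real and imaginary parts yields $h_j=V^*H_jV$ for $j=1,2$. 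Now, given any selfadjoint pair $a=(a_1,a_2)$ with $W_{\rm s}^1(a)\subseteq \mathbb B_2$, jointly diagonalize the commuting pair $(H_1,H_2)$ as $H_1=\sum_k\alpha_kP_k$, $H_2=\sum_k\beta_kP_k$ with $\alpha_k^2+\beta_k^2=1$. For each $k$, the Cauchy--Schwarz inequality combined with $W_{\rm s}^1(a)\subseteq \mathbb B_2$ gives $|\langle(\alpha_ka_1+\beta_ka_2)\xi,\xi\rangle|\leq 1$ for every unit $\xi$, hence the selfadjoint operator $\alpha_ka_1+\beta_ka_2$ is bounded above by $1_d$. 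Tensoring by $P_k$ and summing gives $\sum_j H_j\otimes a_j=\sum_k P_k\otimes(\alpha_ka_1+\beta_ka_2)\leq 1_N\otimes 1_d$, and compression by $V\otimes 1_d$ then yields $\sum_j h_j\otimes a_j\leq 1_n\otimes 1_d$, i.e.\ $L_a(h)\geq 0$. Proposition \ref{criterion} finishes the argument.

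The main obstacle is the dilation-plus-commuting-pair construction, and it is precisely this step that is specific to $m=2$: for $m\geq 3$ one would need $m$ pairwise commuting selfadjoints with $\sum_{j=1}^m H_j^2=1$, but a single unitary dilation of $w$ supplies only one commuting pair, which is why this elementary method stops at $m=2$ and an entirely different argument is needed in higher dimensions.
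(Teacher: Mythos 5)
Your proof is correct, but the substantive inclusion $\mathfrak B_2^{\rm spin}\subseteq\mathfrak B_2^{\rm max}$ is argued by a genuinely different route from the paper's. The paper works on the side of the test tuple $a=(a_1,a_2)$: from $W_{\rm s}^1(a)\subseteq\mathbb B_2$ it deduces that $y=\tfrac12(a_1+\im a_2)$ has numerical radius at most $1$, invokes Ando's theorem and Choi's criterion to manufacture a unital completely positive map $\psi:\M_2(\mathbb C)\rightarrow\M_\ell(\mathbb C)$ with $\psi(\sigma_X)=a_1$ and $\psi(\sigma_Y)=a_2$, and then applies $\mathrm{id}_{\M_n(\mathbb C)}\otimes\psi$ to the positive operator $L_{(\sigma_X,\sigma_Y)}(h)$. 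You instead work on the side of $h$: you observe that $h\in\mathfrak B_2^{\rm spin}$ exactly when $h_1+\im h_2$ is a contraction, dilate that contraction to a unitary, and exploit that the real and imaginary parts of a unitary form a commuting pair $(H_1,H_2)$ with $H_1^2+H_2^2=1$, after which the positivity of $L_a(H_1,H_2)$ is an elementary joint-spectral Cauchy--Schwarz estimate and compression finishes. All steps check out (including the block-matrix/Schur-complement equivalences and the compression by $V\otimes 1_d$). Note that yours is essentially the dilation-theoretic argument of Helton--Klep--McCullough--Schweighofer that the paper explicitly mentions and sets out to avoid; what the paper's version buys is a proof living entirely in the cp-map formalism, showing that the only input needed about $a$ is Ando's characterization of numerical radius, while your version buys elementarity (no Ando, no Choi matrix) and makes visible exactly where the method breaks for $m\geq 3$, namely the need for a commuting selfadjoint dilation onto the sphere $S^{m-1}$. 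One small point of care: in the easy direction you should state explicitly that $W_{\rm s}^1(\sigma_X,\sigma_Y)$ equals (in particular is contained in) $\mathbb B_2$ before applying Proposition \ref{criterion}; the paper gets this from the identity $\sigma_X+\im\sigma_Y=\left[\begin{smallmatrix}0&2\\0&0\end{smallmatrix}\right]$, and your appeal to the spherical-coordinates computation serves the same purpose.
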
 

\begin{proof} 
By Proposition \ref{fs1}, the spin ball $\mathfrak B_2^{\rm spin}=\mathfrak B_2^{\rm max}$ is the free spectrahedron determined by the Pauli matrices
$(\sigma_X,\sigma_Y)$. 
Because $\sigma_X+i\sigma_Y=\left[ \begin{array}{cc} 0&2 \\ 0&0\end{array}\right]$, the spatial numerical range $W_{\rm s}^1(\sigma_X,\sigma_Y)$ is
equal to the closed Euclidean disc $\mathbb B_2$ in $\mathbb R^2$. Hence, if
a selfadjoint pair $(h_1,h_2)\in\M_n(\mathbb C)\times\M_n(\mathbb C)$
belongs to $\mathfrak B_1^{\rm max}$, then by definition
\begin{equation}\label{xy}
1_n\otimes 1_2 - \left( h_1\otimes \sigma_X + h_2\otimes \sigma_Y\right)
\end{equation}
is postive semidefinite.
Consequently, $(h_1,h_2)$
belongs to $\mathfrak B_2^{\rm spin}$, showing that 
$\mathfrak B_2^{\rm max}\subseteq\mathfrak B_2^{\rm spin}$.

Conversely, assume that
a selfadjoint pair $(h_1,h_2)\in\M_n(\mathbb C)\times\M_n(\mathbb C)$
belongs to $\mathfrak B_2^{\rm spin}$. Thus, the matrix in equation (\ref{xy}) above is positive semidefinite. 
Select any pair $(a_1,a_2)$ of $\ell\times \ell$ selfadjoint matrices in which 
$W_{\rm s}^1(a_1,a_2)\subseteq\mathbb B_2$. As the numerical radius of $y=\frac{1}{2}(a_1+ia_2)$ is at most $1$, there exists, by Ando's Theorem \cite{ando1973},
a positive semidefinite contraction $b\in\M_\ell (\mathbb C)$ such that
\[
c=\left[ \begin{array}{cc} 1 & y \\ y^* & 1_\ell-b\end{array}\right]
\]
is positive semidefinite. Let $\psi:\M_2(\mathbb C)\rightarrow\M_\ell(\mathbb C)$ be the unital linear map in which $\psi(e_{11})=b$,
$\psi(e_{12})=y$, $\psi(e_{21})=y^*$, and $\psi(e_{22})=1_\ell-b$. The matrix $c$ above is the Choi matrix for $\psi$; 
therefore, by Choi's Criterion \cite[Theorem 3.14]{Paulsen-book}, $\psi$ is completely positive.
Now since $a_1+ia_2=\psi(\sigma_X+i\sigma_Y)$, equating real 
and
imaginary parts yields $a_1=\psi(\sigma_X)$ and $a_2=\psi(\sigma_Y)$. Hence, the 
positivity of 
\[
1_n\otimes 1_2 - \left( h_1\otimes \sigma_X + h_2\otimes \sigma_Y\right)
\]
implies the positivity of
\[
(\mbox{\rm id}_{\M_n(\mathbb C)}\otimes\psi ) \left[1_n\otimes 1_2 - \left( h_1\otimes \sigma_X + h_2\otimes \sigma_Y\right)\right ]  = 1_\ell\otimes 1_2 - \left( h_1\otimes a_1 + h_2\otimes a_2\right),
\]
That is, $(h_1,h_2)$ belongs to $\mathfrak B_1^{\rm max}$, proving that 
$\mathfrak B_2^{\rm spin}\subseteq\mathfrak B_2^{\rm max}$.

The proof that $\mathfrak B_1^{\rm spin}=\mathfrak B_1^{\rm max}$ is more straightforward, and is left to the interested reader.
\end{proof} 

It would, of course, be very interesting to know whether Theorem \ref{spin ball thm} extends to 
higher dimensions. Some evidence that this might be so is
presented in \cite{passer2019}.

\begin{definition} An $m$-tuple $y=(y_1,\dots,y_m)$ of matrices $y_j\in \M_{d_2}(\mathbb C)$ 
is a \emph{dilation} of an $m$-tuple $x=(x_1,\dots,x_m)$ of matrices $x_j\subset \M_{d_1}(\mathbb C)$ if there exists a linear isometry $w:\mathbb C^{d_1}\rightarrow\mathbb C^{d_2}$
such that 
$x_j=w^*y_jw$, for every $j$.
\end{definition} 

Put differently, $y=(y_1,\dots,y_m)$ is a dilation of $x=(x_1,\dots,x_m)$ if
there is a unitary $z\in\mathcal U_{d_2}$ such that
\[
z^*y_jz=\left[ \begin{array}{cc} x_j & * \\ * & * \end{array}\right],
\]
for every $j$.

A reinterpretation of Corollary \ref{main result cor1} leads to the following dilation result
for triples of spin unitaries.  
 
\begin{proposition} For every spin triple $u,v,w\in\mathcal U_d$, 
there exists a $k\in\mathbb N$ such that 
$(\sigma_X\otimes 1_k, \sigma_Y\otimes 1_k, \sigma_Z\otimes 1_k)$
%\[
%\left(\bigoplus_1^k\sigma_X,\bigoplus_1^k\sigma_Y,\bigoplus_1^k\sigma_Z\right)
%\]
is a dilation of $(u,v,w)$, and there exists a
$\ell\in\mathbb N$ such that
$(u\otimes 1_\ell, v\otimes 1_\ell, w\otimes 1_\ell)$
%\[
%\left(\bigoplus_1^\ell u,\bigoplus_1^\ell v,\bigoplus_1^\ell w\right)
%\]
is a dilation of $(\sigma_X,\sigma_Y,\sigma_Z)$.  
\end{proposition}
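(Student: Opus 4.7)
The plan is to invoke Corollary \ref{main result cor1} to obtain a unital complete order isomorphism, then dilate it by Arveson extension together with the Stinespring representation, exploiting the fact that every unital $*$-representation of a full matrix algebra is unitarily equivalent to an amplification of the identity representation.

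More precisely, by Corollary \ref{main result cor1} applied to the two spin triples $(u,v,w)$ and $(\sigma_X,\sigma_Y,\sigma_Z)$, there is a unital complete order isomorphism
\[
\phi:\mathcal O_{(u,v,w)}\rightarrow \mathcal O_{(\sigma_X,\sigma_Y,\sigma_Z)}
\]
with $\phi(u)=\sigma_X$, $\phi(v)=\sigma_Y$, $\phi(w)=\sigma_Z$. The first dilation claim is obtained by extending $\phi$ (as a ucp map into $\M_2(\mathbb C)$) to a ucp map $\Phi:\M_d(\mathbb C)\rightarrow\M_2(\mathbb C)$ by the Arveson Extension Theorem, and then applying Stinespring's theorem to write $\Phi(x)=V^*\pi(x)V$ for some unital $*$-representation $\pi$ of $\M_d(\mathbb C)$ on some Hilbert space and some isometry $V$. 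Because $\M_d(\mathbb C)$ is a full matrix algebra, $\pi$ is unitarily equivalent to the amplification $x\mapsto x\otimes 1_\ell$ on $\mathbb C^d\otimes\mathbb C^\ell$ for some $\ell\in\mathbb N$. Absorbing this unitary equivalence into the isometry, we obtain an isometry $V:\mathbb C^2\rightarrow \mathbb C^d\otimes\mathbb C^\ell$ with
\[
\sigma_X=V^*(u\otimes 1_\ell)V,\quad \sigma_Y=V^*(v\otimes 1_\ell)V,\quad \sigma_Z=V^*(w\otimes 1_\ell)V,
\]
which is exactly the statement that $(u\otimes 1_\ell,v\otimes 1_\ell,w\otimes 1_\ell)$ is a dilation of $(\sigma_X,\sigma_Y,\sigma_Z)$.

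For the first assertion, repeat the same argument starting from $\phi^{-1}:\mathcal O_{(\sigma_X,\sigma_Y,\sigma_Z)}\rightarrow\mathcal O_{(u,v,w)}$. Extend via Arveson to a ucp map $\Psi:\M_2(\mathbb C)\rightarrow\M_d(\mathbb C)$, apply Stinespring to $\Psi$, and note that every unital $*$-representation of $\M_2(\mathbb C)$ is unitarily equivalent to $y\mapsto y\otimes 1_k$ on $\mathbb C^2\otimes\mathbb C^k$ for some $k$. This yields an isometry $W:\mathbb C^d\rightarrow\mathbb C^2\otimes\mathbb C^k$ with $u=W^*(\sigma_X\otimes 1_k)W$, and similarly for $v$ and $w$, proving that $(\sigma_X\otimes 1_k,\sigma_Y\otimes 1_k,\sigma_Z\otimes 1_k)$ dilates $(u,v,w)$. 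No step is a genuine obstacle: the only subtlety is the standard reduction of a Stinespring representation of a full matrix algebra to an amplification of the identity, which is a direct consequence of the classification of representations of $\M_n(\mathbb C)$.
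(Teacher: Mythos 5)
Your proof is correct and follows essentially the same route as the paper: complete order equivalence from Corollary \ref{main result cor1}, a ucp map between the relevant matrix algebras, Stinespring, and the fact that every finite-dimensional unital representation of a full matrix algebra is an amplification of the identity. The only cosmetic difference is that the paper produces the ucp map via matrix-range membership (Theorem \ref{mr thm}) rather than by an explicit Arveson extension, and in the direction starting from $\M_2(\mathbb C)$ your appeal to Arveson is not even needed, since $\mathcal O_{(\sigma_X,\sigma_Y,\sigma_Z)}$ is already all of $\M_2(\mathbb C)$.
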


\begin{proof} By Corollary \ref{main result cor1}, $(u,v,w)\simeq_{\rm ord} (\sigma_X,\sigma_Y,\sigma_Z)$.
Therefore, the triples
$(u,v,w)$ and $(\sigma_X,\sigma_Y,\sigma_Z)$ have identical matrix ranges, by Theorem \ref{mr thm}. In particular,
\[
(u,v,w)\in W^d(\sigma_X,\sigma_Y,\sigma_Z).
\]
The operator system generated by the Pauli matrices is the C$^*$-algebra $\M_2(\mathbb C)$; therefore, the inclusion
above indicates that $u=\phi(\sigma_X)$, $v=\phi(\sigma_Y)$, and $w=\phi(\sigma_Z)$, for some unital completely
positive linear map $\phi:\M_2(\mathbb C)\rightarrow\M_d(\mathbb C)$.
By the Stinespring Theorem \cite{Paulsen-book}, $\phi$ has the form $\phi(y)=\gamma^*\pi(y) \gamma$, 
for some unital representation
$\pi$ of $\M_2(\mathbb C)$ on which the representing Hilbert space has finite dimension. In other words, $\pi(y)$
is a dilation of $\phi(y)$, for every $y\in\M_2(\mathbb C)$.
Because every representation
of a full matrix algebra is unitarily equivalent to a direct sum of the identify representation, we may assume that $\gamma$
and $\pi$ are so chosen so that 
\[
\pi(y)=\bigoplus_1^k y = y\otimes 1_k,
\]
thereby implying that 
$(\sigma_X\otimes 1_k, \sigma_Y\otimes 1_k, \sigma_Z\otimes 1_k)$
is a dilation of $(u,v,w)$.

The second statement is argued in the same manner by interchanging the roles of the spin triples. 
\end{proof}

%%%%%%%%%%%%%%%%%%%%%%%%%%%%%%%%%%
\section{Countable Spin Systems}

Our main results are linear-algebraic in nature; however, they can be applied to the study of general spin systems in physics. 

In this concluding section, $\H$ shall denote an infinite-dimensional separable complex Hilbert space, and $\B(\H)$ shall denote the algebra of all
bounded linear operators on $\H$. A \emph{spin system} is a finite or countably infinite set $\mathcal S$ of 
selfadjoint unitary operators on $\H$ such that $uv=-vu$ for any pair $u,v\in\mathcal S$. We shall assume, in this section, that
the reader is familiar with C$^*$-algebra theory (e.g., \cite{Blackadar-book}) and the general theory of operator systems (e.g., \cite{choi--effros1977,Paulsen-book}). 

We continue with the notation $\A_{{\rm spin}(m)}$ for the universal C$^*$-algebra generated by $m$ spin unitaries, and we introduce the
notation $\A_{{\rm spin}(\aleph_0)}$ for the universal C$^*$-algebra generated by a countably-infinite number of spin unitaries. The existence of 
$\A_{{\rm spin}(\aleph_0)}$ is established in \cite{arveson--price2003}, while the uniqueness of $\A_{{\rm spin}(\aleph_0)}$ up to a
C$^*$-isomorphism that sends universal spin unitaries to universal spin unitaries is a consequence of the universal property of universal C$^*$-algebras.
Likewise, each $\A_{{\rm spin}(m)}$ can be realised as a unital C$^*$-algebra of $\A_{{\rm spin}(\aleph_0)}$ be selecting any $m$ of the universal spin unitaries that
generate $\A_{{\rm spin}(\aleph_0)}$.

Therefore, without loss of generality, we may assume $\{\mathfrak u_k\}_{k\in\mathbb N}$ is a prescribed set of universal spin unitaries and that
$\A_{{\rm spin}(m)}$ is the unital C$^*$-subalgebra of $\A_{{\rm spin}(\aleph_0)}$ generated by $\mathfrak u_1,\dots,\mathfrak u_m$.
Furthermore, if $\mathcal O_{{\rm spin}(\aleph_0)}$ is the operator subsystem of $\A_{{\rm spin}(\aleph_0)}$ generated by (i.e., spanned by)
$\{\mathfrak u_k\}_{k\in\mathbb N}$,
then we may view $\mathcal O_{{\rm spin}(m)}$ as the operator subsystem of $\mathcal O_{{\rm spin}(\aleph_0)}$ generated by 
$\mathfrak u_1,\dots,\mathfrak u_m$. Because $\mathcal O_{{\rm spin}(\aleph_0)}$ is not norm closed, it is also of interest to consider the operator system
$\overline{\mathcal O_{{\rm spin}(\aleph_0)}}$, the norm-closure of $\mathcal O_{{\rm spin}(\aleph_0)}$ in $\A_{{\rm spin}(\aleph_0)}$.

Our goals are to prove the following two theorems.

\begin{theorem}\label{inf1} If $\nu=(v_1,v_2,\dots)$ is a sequence of selfadjoint anticommuting unitary operators acting on a Hilbert space $\H$, if 
$\mathcal O_\nu=\mbox{\rm Span}\{v_k\,|\,k\in\mathbb N\}$ and if $\overline{\mathcal O_\nu}$ is the norm-closure of $\mathcal O_\nu$ in $\B(\H)$, then
the linear map $\phi:\mathcal O_{{\rm spin}(\aleph_0)}\rightarrow \mathcal O_\nu$ in which $\phi(\mathfrak u_k)=v_k$, for every $k\in\mathbb N$, 
is a unital complete order isomorphism. Moreover, there exists a unital complete order isomorphism
$\Phi:\overline{\mathcal O_{{\rm spin}(\aleph_0)}}\rightarrow \overline{\mathcal O_\nu}$ such that $\phi=\Phi_{\vert \mathcal O_{{\rm spin}(\aleph_0)}}$.
\end{theorem}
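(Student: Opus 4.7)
The plan is to reduce the infinite case to the finite-dimensional main result, Theorem \ref{main result}, by exploiting that each finite truncation $\A_{{\rm spin}(n)}$ is a finite-dimensional C$^*$-algebra whose representations decompose into finite-dimensional irreducible summands. Universality produces a unital $*$-homomorphism $\pi:\A_{{\rm spin}(\aleph_0)}\to\B(\H)$ with $\pi(\mathfrak u_k)=v_k$, and $\phi=\pi_{\vert\mathcal O_{{\rm spin}(\aleph_0)}}$ is automatically a unital completely positive linear map with the required action on generators; it remains to show that $\phi^{-1}$ is also completely positive. Any $Y\in\M_n(\mathcal O_{{\rm spin}(\aleph_0)})$ involves only finitely many spin generators, say $\mathfrak u_1,\dots,\mathfrak u_m$, so $Y\in\M_n(\mathcal O_{{\rm spin}(m)})$; the embedding $\mathcal O_{{\rm spin}(m)}\hookrightarrow\mathcal O_{{\rm spin}(\aleph_0)}$ comes from a unital injective $*$-homomorphism of C$^*$-algebras and is therefore a complete order embedding, so matricial positivity in the two systems coincides. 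It accordingly suffices to show, for each $m$, that $\phi$ restricts to a unital complete order isomorphism from $\mathcal O_{{\rm spin}(m)}$ onto $\mathcal O_{\nu,m}:=\mbox{\rm Span}\{1,v_1,\dots,v_m\}\subset\B(\H)$.

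For the finite reduction, I would use that $\A_{{\rm spin}(m)}$ is finite-dimensional, so $\pi_m:=\pi_{\vert\A_{{\rm spin}(m)}}$ decomposes $\H$ as a direct sum of finite-dimensional, $\pi_m$-invariant subspaces on which $\pi_m$ acts irreducibly. Fix any such summand $\H_\alpha$; then $(v_1|_{\H_\alpha},\dots,v_m|_{\H_\alpha})$ is an $m$-tuple of anticommuting selfadjoint unitary matrices on the finite-dimensional space $\H_\alpha$, none scalar (since $\A_{{\rm spin}(m)}$ has no nontrivial one-dimensional representations when $m\geq 2$). By Theorem \ref{main result}, the assignment $\mathfrak u_j\mapsto v_j|_{\H_\alpha}$ extends to a unital complete order isomorphism onto $\mbox{\rm Span}\{1,v_j|_{\H_\alpha}\}$, whose inverse $\tau$ is unital and completely positive. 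Composing $\tau$ with the compression $R:\mathcal O_{\nu,m}\to\mbox{\rm Span}\{1,v_j|_{\H_\alpha}\}$, $x\mapsto x|_{\H_\alpha}$, which is unital completely positive because $\H_\alpha$ is $\pi$-invariant, yields a unital completely positive left inverse $\tau\circ R$ to $\phi|_{\mathcal O_{{\rm spin}(m)}}$. This forces $\phi|_{\mathcal O_{{\rm spin}(m)}}$ to be injective (so $\{1,v_1,\dots,v_m\}$ is linearly independent in $\B(\H)$), and the left inverse is then automatically a two-sided inverse between equidimensional vector spaces, establishing the desired unital complete order isomorphism.

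For the extension to the norm closures, $\phi$ and $\phi^{-1}$ are unital completely positive, hence completely contractive, so they extend uniquely to bounded linear maps $\Phi:\overline{\mathcal O_{{\rm spin}(\aleph_0)}}\to\overline{\mathcal O_\nu}$ and $\Psi:\overline{\mathcal O_\nu}\to\overline{\mathcal O_{{\rm spin}(\aleph_0)}}$. Complete positivity passes to the closures because matricial positive cones are norm-closed and any positive element of a closure can be approximated by elements of the form $X+\epsilon 1$ with $X$ selfadjoint in the dense subsystem and $\epsilon>0$; the identities $\Psi\circ\Phi=\mbox{id}$ and $\Phi\circ\Psi=\mbox{id}$ then hold on the closures by continuity, so $\Phi$ is a unital complete order isomorphism with $\phi=\Phi_{\vert\mathcal O_{{\rm spin}(\aleph_0)}}$.

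The main obstacle is that the proof of Theorem \ref{main result} makes essential use of the canonical trace on $\M_d(\mathbb C)$, which has no analogue on $\B(\H)$; the finite-dimensional reduction via invariant summands circumvents this by transporting each positivity question to a matrix algebra where Theorem \ref{main result} applies verbatim. A minor subtlety, immaterial to the argument, is that $\pi_m$ need not be faithful on $\A_{{\rm spin}(m)}$ when $m$ is odd, but $\mathcal O_{{\rm spin}(m)}$ is faithfully represented on any single irreducible summand, which is all that the above argument requires.
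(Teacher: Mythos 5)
Your proof is correct, and its overall architecture --- reduce any matricial positivity question to finitely many generators, invoke Theorem \ref{main result} for the finite case, then pass to the norm closures by continuity --- is the same as the paper's. Where you genuinely diverge is also where you are more careful than the paper: the paper simply cites Theorem \ref{main result} to conclude that the restriction $\phi_m$ of $\phi$ to $\mathcal O_{{\rm spin}(m)}$ is a complete order embedding, even though that theorem is stated only for spin systems of \emph{matrices}, whereas here $v_1,\dots,v_m$ act on an infinite-dimensional $\H$. Your reduction --- decompose the representation of the finite-dimensional C$^*$-algebra $\A_{{\rm spin}(m)}$ into finite-dimensional irreducible summands, apply Theorem \ref{main result} on a single reducing summand $\H_\alpha$, and assemble a ucp left inverse $\tau\circ R$ with $R$ the compression to $\H_\alpha$ --- supplies exactly the bridge the paper leaves implicit, and your observation that a ucp left inverse between equidimensional operator systems is automatically the (ucp) two-sided inverse is the same device the paper uses inside the proof of Theorem \ref{main result} itself. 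Your treatment of the closures (extend the completely contractive maps $\phi$ and $\phi^{-1}$ by continuity, and push positivity through via $X+\epsilon 1$ approximations together with closedness of the matricial cones) is an equivalent substitute for the paper's route through the fact that unital complete order embeddings are complete isometries. The one tacit assumption you share with the paper is that $\mathcal O_{{\rm spin}(m)}\hookrightarrow\mathcal O_{{\rm spin}(\aleph_0)}$ is itself a complete order embedding; you at least indicate why (it arises from an injective unital $*$-homomorphism), which is the convention the paper fixes in the paragraph preceding the theorem, so this is not a gap.
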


\begin{theorem}\label{inf2} $\cstare(\mathcal O_{{\rm spin}(\aleph_0)})\cong \cstare(\overline{\mathcal O_{{\rm spin}(\aleph_0)}})\cong\displaystyle\bigotimes_1^\infty \M_2(\mathbb C)$.
\end{theorem}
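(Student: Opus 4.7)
The plan is to realize the universal countable spin operator system concretely inside the CAR algebra $\bigotimes_1^\infty\M_2(\mathbb C)$ and then apply Hamana's theorem together with the simplicity of the CAR algebra. Define a sequence $\nu=(v_1,v_2,\dots)$ in $\bigotimes_1^\infty\M_2(\mathbb C)$ via the Jordan-Wigner pattern already implicit in the proof of Theorem \ref{envelope thm}:
\[
v_{2j-1}=\underbrace{\sigma_Z\otimes\cdots\otimes\sigma_Z}_{j-1}\otimes\sigma_X\otimes 1\otimes 1\otimes\cdots, \quad v_{2j}=\underbrace{\sigma_Z\otimes\cdots\otimes\sigma_Z}_{j-1}\otimes\sigma_Y\otimes 1\otimes 1\otimes\cdots.
\]
Each $v_j$ is a selfadjoint unitary, and a tensor-factor-by-tensor-factor computation gives $v_iv_j=-v_jv_i$ for all $i\ne j$, so $\nu$ is a countable spin system. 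For each fixed $k$, the truncation $(v_1,\dots,v_{2k})$ is precisely the set $\mathcal Q_{k,-}$ of Theorem \ref{envelope thm}, sitting inside the canonical copy $\bigotimes_1^k\M_2(\mathbb C)\subset\bigotimes_1^\infty\M_2(\mathbb C)$, so $\cstar(v_1,\dots,v_{2k})=\bigotimes_1^k\M_2(\mathbb C)$. Because $\bigcup_k\bigotimes_1^k\M_2(\mathbb C)$ is norm-dense in the CAR algebra, we obtain $\cstar(\mathcal O_\nu)=\cstar(\overline{\mathcal O_\nu})=\bigotimes_1^\infty\M_2(\mathbb C)$.

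By Theorem \ref{inf1}, there is a unital complete order isomorphism $\phi:\mathcal O_{{\rm spin}(\aleph_0)}\to\mathcal O_\nu$ that extends to a unital complete order isomorphism $\Phi:\overline{\mathcal O_{{\rm spin}(\aleph_0)}}\to\overline{\mathcal O_\nu}$. The C$^*$-envelope is invariant under unital complete order isomorphism (the general operator-system analogue of Proposition \ref{erg}(1), a direct consequence of the universal property in Theorem \ref{hamana}), and so
\[
\cstare(\mathcal O_{{\rm spin}(\aleph_0)})\cong\cstare(\mathcal O_\nu) \text{ and } \cstare(\overline{\mathcal O_{{\rm spin}(\aleph_0)}})\cong\cstare(\overline{\mathcal O_\nu}).
\]
For each of the two inclusions $\mathcal O_\nu\hookrightarrow\bigotimes_1^\infty\M_2(\mathbb C)$ and $\overline{\mathcal O_\nu}\hookrightarrow\bigotimes_1^\infty\M_2(\mathbb C)$, Hamana's theorem produces a surjective $*$-homomorphism from $\bigotimes_1^\infty\M_2(\mathbb C)$ onto the corresponding C$^*$-envelope. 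Since the CAR algebra is the UHF algebra of type $2^\infty$ and is therefore simple, this surjection is forced to be an isomorphism, yielding $\cstare(\mathcal O_\nu)=\cstare(\overline{\mathcal O_\nu})=\bigotimes_1^\infty\M_2(\mathbb C)$.

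The main obstacle will be verifying the density step cleanly, i.e.\ that the finite-stage equality $\cstar(\{v_1,\dots,v_{2k}\})=\bigotimes_1^k\M_2(\mathbb C)$ assembles into the full infinite tensor product, and confirming that Hamana's theorem, stated for matrix operator systems in Theorem \ref{hamana}, applies in the genuinely infinite-dimensional setting; this is standard but worth flagging with a citation to \cite{hamana1979b}. A secondary point is to confirm that $\mathcal O_\nu$ (not norm-closed) and $\overline{\mathcal O_\nu}$ (norm-closed) share the same C$^*$-envelope, which ultimately reduces to the observation that both generate the same simple ambient C$^*$-algebra.
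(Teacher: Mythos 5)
Your proposal is correct and follows essentially the same route as the paper: realize a countable spin system inside the CAR algebra, use simplicity of $\displaystyle\bigotimes_1^\infty\M_2(\mathbb C)$ to kill the Hamana quotient, and transfer the conclusion to $\mathcal O_{{\rm spin}(\aleph_0)}$ and its closure via Theorem \ref{inf1}. The only difference is that you construct the Jordan--Wigner realization explicitly (and correctly) from the $\mathcal Q_{k,-}$ of Theorem \ref{envelope thm}, where the paper simply cites \cite{arveson--price2003} for the existence of such a realization.
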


We now move to the proof of each of these two results.

\begin{proof} (of Theorem \ref{inf1}). We first note that the countable set $\{\mathfrak u_k\}_{k\in\mathbb N}$ is a linear basis for $\mathcal O_{{\rm spin}(\aleph_0)}$.
As it is clearly a spanning set,
we need only note that its elements are linearly independent. To this end, select a finite subset $\mathcal F$ of $\{\mathfrak u_k\}_{k\in\mathbb N}$, and let $m$ be the maximum of the k$\in\mathbb N$ 
for which $v_k\in\mathcal F$. Thus, the elements of $\mathcal F$ belong to the set of generators of the universal C$^*$-algebra $\A_{{\rm spin}(m)}$. Therefore, the elements of $\mathcal F$
may be realised as spin unitary matrices, and we showed in Proposition \ref{prop:elementary} that such unitary matrices must be linearly independent. 

Likewise, any finite subset $\mathcal G$ of $\{v_k\,|\,k\in\mathbb N\}$ determines a finite-dimensional 
spin operator system (say of dimension $\ell+1$) which must, by Theorem \ref{main result}, be completely order isomorphic to 
$\A_{{\rm spin}(\ell)}$ via an isomorphism that takes spin unitaries to spin unitaries. Hence, the elements of $\mathcal G$ are linearly independent, proving that 
$\{v_k\}_{k\in\mathbb N}$ is a linear basis for $\mathcal O_\nu$.

The  linear map $\phi$ is obtained by restricting the unital
C$^*$-algebra homomorphism $\pi:\A_{{\rm spin}(\aleph_0)}\rightarrow\B(\H)$, in which $\pi(\mathfrak u_k)=v_k$ for every $k\in\mathbb N$, to the operator system
$\mathcal O_{{\rm spin}(\aleph_0)}$. (The existence of $\pi$
is a consequence of the universal property of $\A_{{\rm spin}(\aleph_0)}$.) Hence, $\phi$ is unital and completely positive; as it takes a linear basis of
$\mathcal O_{{\rm spin}(\aleph_0)}$ to a linear basis of $\mathcal O_\nu$, the map $\phi$ is also a bijection. All that remains is to show that the linear inverse $\phi^{-1}$
is completely positive. 
To this end, let $Y\in\M_n(\mathcal O_\nu)_+$; thus, there exists a unique $X\in \M_n(\mathcal O_{{\rm spin}(\aleph_0)})$ such that
$Y=\phi^{(n)}(X)$. We aim to show that
$X=(\phi^{-1})^{(n)}(Y)$ is a positive element of $\M_n(\mathcal O_{{\rm spin}(\aleph_0)})$.
To do so, note that each entry $x_{ij}$ of $X$ is a linear combination of a finite set $\mathcal F_{ij}$ of unitaries in $\{\mathfrak u_k\,|\,k\in\mathbb N\}$. Let $m$ be the
maximum of all $k\in\mathbb N$ for which $v_k\in\mathcal F_{ij}$ for some $i$ and $j$. Thus, we may view $X$ as being an element of $\M_n(\mathcal O_{{\rm spin}(m)})$.
The restriction $\phi_m$ of $\phi$ to $\mathcal O_{{\rm spin}(m)}$ is a unital complete order embedding, by Theorem \ref{main result}, which implies that 
$Y=\phi_m^{(n)}(X)$ is positive in $\M_n(\mathcal O_\nu)$ only if $X$ is positive in $\M_n(\mathcal O_{{\rm spin}(m)})$. However, if $X$ is positive in $\M_n(\mathcal O_{{\rm spin}(m)})$,
then it is also positive in $\M_n(\mathcal O_{{\rm spin}(\aleph_0)})$, which completes the proof that $\phi$ is a unital complete order isomorphism.

The norm of an element $x$ in an operator system $\osr$ is determined by
\[
\|x\|=\inf\left\{t>0\,|\,\left[\begin{array}{cc} te_\osr & x \\ x^* & te_\osr\end{array}\right] \in \M_2(\osr)_+\right\},
\]
and, therefore, a unital completely positive map $\psi:\osr\rightarrow\ost$ is a complete order embedding if and only if $\psi$ is a complete isometry \cite[\S4]{choi--effros1977}.
Thus, the unital complete order isomorphism $\phi:\mathcal O_{{\rm spin}(\aleph_0)}\rightarrow \mathcal O_\nu$ is a complete isometry; hence, $\phi$ extends to a 
completely isometric unital bijection $\Phi:\overline{\mathcal O_{{\rm spin}(\aleph_0)}}\rightarrow \overline{\mathcal O_\nu}$, implying that $\Phi$ is a unital complete order isomorphism, by
\cite[\S4]{choi--effros1977}. This completes the proof of Theorem \ref{inf1}.
\end{proof}

We note that the proof of Theorem \ref{inf1} may be adapted so that it applies, as well, to uncountable sets of spin unitaries acting on infinite-dimensional Hilbert spaces.

\begin{proof} (of Theorem \ref{inf2}). There exists a sequence $\nu=(v_1,v_2,\dots)$ of spin unitaries acting on a separable Hilbert space $\H$ such that the C$^*$-algebra
generated by the operator system $\mathcal O_v$ is (isomorphic to) the CAR algebra
$\displaystyle\bigotimes_1^\infty \M_2(\mathbb C)$ \cite{arveson--price2003}. Thus, the C$^*$-envelope of
$\mathcal O_\nu$ is a quotient of the CAR algebra. However, because the CAR algebra is simple, there are no nontrivial quotients of it; hence,
$\cstare(\mathcal O_\nu)\cong \displaystyle\bigotimes_1^\infty \M_2(\mathbb C)$.
Now because Theorem \ref{inf1} asserts that $\mathcal O_\nu$ and $\mathcal O_{{\rm spin}(\aleph_0)}$ are unitally complete order isomorphic, they necessarily have the same 
C$^*$-envelopes, which yields $\cstare(\mathcal O_{{\rm spin}(\aleph_0)})\cong \displaystyle\bigotimes_1^\infty \M_2(\mathbb C)$.
As the same argument applies to the operator system $\overline{\mathcal O_{{\rm spin}(\aleph_0)}}$, the proof of Theorem \ref{inf2} is complete.
\end{proof}

%%%%%%%%%%%%%%%%%%%%%%%%%%%%%%%%%% 
\section*{Acknowledgements}
This work was supported, in part, by the NSERC Undergraduate Student Research Award and NSERC Discovery Grant programs, the 
Canada Foundation for Innovation, and the Canada Research Chairs program.

%F.H.~was supported by an NSERC Undergraduate Student Research Award. S.P.~was supported by NSERC Discovery Grant number 1174582, the Canada Foundation for Innovation grant number  35711, and the Canada Research Chairs grant number 231250.

%%%%%%%%%%%%%%%%%%%%%%%
\bibliographystyle{amsplain}
\bibliography{doug-refs}

%%%%%%%%%%%%%%%%%%%%%%%%%
\end{document}